\newcommand{\nl}{\vskip 0.3cm}
\newcommand{\C}{{\mathbb C}}
\newcommand{\R}{{\mathbb R}}
\newcommand{\Z}{{\mathbb Z}}
\newcommand{\N}{{\mathbb N}}
\newcommand{\scal}[2]{\langle #1,#2\rangle}
\renewcommand{\Im}{\mbox{\rm Im}\,}
\renewcommand{\Re}{\mbox{\rm Re}\,}
\newtheorem{thm}{Theorem}[section]
\newtheorem*{thm*}{Theorem}
\newtheorem{cor}[thm]{Corollary}
\newtheorem{lem}[thm]{Lemma}
\newtheorem*{lem*}{Lemma}
\newtheorem{prop}[thm]{Proposition}
\newtheorem*{prop*}{Proposition}
\title{Phase retrieval for wavelet transforms}
\author{Ir\`ene Waldspurger
\IEEEcompsocitemizethanks{\IEEEcompsocthanksitem The author is with CNRS and CEREMADE, Université Paris Dauphine (e-mail: waldspurger@ceremade.dauphine.fr).}}
\date{}
\begin{document}

\maketitle
\begin{abstract}
This article describes a new algorithm that solves a particular phase retrieval problem, with important applications in audio processing: the reconstruction of a function from its scalogram, that is, from the modulus of its wavelet transform.

It is a multiscale iterative algorithm, that reconstructs the signal from low to high frequencies. It relies on a new reformulation of the phase retrieval problem, that involves the holomorphic extension of the wavelet transform. This reformulation allows to propagate phase information from low to high frequencies.

Numerical results, on audio and non-audio signals, show that reconstruction is precise and stable to noise. The complexity of the algorithm is linear in the size of the signal, up to logarithmic factors. It can thus be applied to large signals.
\end{abstract}

\begin{IEEEkeywords}
Phase retrieval, scalogram, iterative algorithms, multiscale method
\end{IEEEkeywords}

\section{Introduction}

The spectrogram is an ubiquitous tool in audio analysis and processing, eventually after being transformed into mel-frequency cepstrum coefficients (MFCC) by an averaging along frequency bands. A very similar operator, yielding the same kind of results, is the modulus of the wavelet transform, sometimes called scalogram.

The phase of the wavelet transform (or the windowed Fourier transform in the case of the spectrogram) contains information that cannot be deduced from the single modulus, like the relative phase of two notes with different frequencies, played simultaneously. However, this information does not seem to be relevant to understand the perceptual content of audio signals \citep{balan,risset}, and only the modulus is used in applications. To clearly understand which information about the audio signal is kept or lost when the phase is discarded, it is natural to consider the corresponding inverse problem: to what extent is it possible to reconstruct a function from the modulus of its wavelet transform? The study of this problem mostly begun in the early 80's \citep{griffin_lim,nawab}.

On the applications side, solving this problem allows to resynthesize sounds after some transformation has been applied to their scalogram. Examples include blind source separation \citep{virtanen} or audio texture synthesis \citep{bruna_texture}.

\nl
The reconstruction of a function from the modulus of its wavelet transform is an instance of the class of \emph{phase retrieval problems}, where one aims at reconstructing an unknown signal $x\in\C^n$ from linear measurements $Ax\in\C^m$, whose phase has been lost and whose modulus only is available, $|Ax|$. These problems are known to be difficult to solve.

Two main families of algorithms exist. The first one consists of iterative algorithms, like gradient descents or alternate projections \citep{fienup,gerchberg}. In the case of the spectrogram, the oldest such algorithm is due to Griffin and Lim \citep{griffin_lim}. These methods are simple to implement but, because the phase retrieval problem is non-convex, they are not guaranteed to converge towards the correct solution, and often get stuck in local minima. For measurement matrices $A$ that are chosen at random (according to a suitable distribution), this problem can be overcome with a careful initialization of the algorithm \citep{netrapalli,candes_wirtinger,candes_wirtinger2}. However, these methods do not apply when measurements are not random. In the case of the spectrogram or scalogram, the reconstructed signals tend to present auditive artifacts. For the spectrogram, the performances can be improved by applying the algorithm window by window and not to the whole signal at the same time \citep{bouvrie}. If additional information on the nature of the audio signal is available, it can also be taken into account in the algorithm \citep{achan,eldar_sparse}. Nevertheless, in the general case, the reconstruction results are still perfectible.

More recently, convexification methods have been proposed \citep{chai,candes2}. For generic phase retrieval problems, these methods are guaranteed to return the true solution with high probability when the measurement matrix $A$ is chosen at random. In the case of the spectrogram or scalogram, the matrix is not random and the proof does not hold. However, numerical experiments on small signals indicate that the reconstruction is in general correct \citep{sun_smith,maxcut}. Unfortunately, these methods have a high complexity, making them difficult to use for phase retrieval problems whose size exceeds a few hundred.

\nl
In this article, we present a new algorithm for the reconstruction of a function from its scalogram. As convexification methods, it offers a reconstruction of high quality. However, it has the complexity of an iterative method (roughly proportional to the size of the signal, up to logarithmic factors) and can be applied to large signals. The memory it requires is also proportional to the size of the signal.

The algorithm is multiscale: it performs the reconstruction frequency band by frequency band, from the lowest frequencies up to the highest ones.

\nl
The main idea of this algorithm is to introduce an equivalent formulation of the phase retrieval problem (by using the analytic extension of the wavelet transform). This reformulation has two advantages.
\begin{itemize}
\item It gives an explicit method to propagate towards higher scales the phase information reconstructed at lower scales. Thus, at each scale, we have a good initial guess for the subproblem that we have to solve (in the spirit of \citep{netrapalli,candes_wirtinger}, but for non-random measurements).
\item The local optimization algorithm naturally derived from the reformulation, although non-convex, seems very robust to the problem of local minima. In a particular case, if we allow for a small modification, we can even prove that it has no spurious local minimum.
\end{itemize}

\nl
Additionally,
\begin{itemize}
\item we introduce a multigrid error correction method, to detect and correct eventual errors of the reconstruction algorithm afterwards;
\item we use our algorithm to numerically study the intrinsic stability of the phase retrieval problem, and highlight the role played by the sparsity or non-sparsity of the wavelet coefficients.
\end{itemize}


\nl
Section \ref{s:notations} is devoted to definitions and notations. In Section \ref{s:well_posedness}, we discuss the well-posedness of the phase retrieval problem. We explain our reformulation in Section \ref{s:reformulation}, prove its equivalence with the classical formulation, and describe its advantages. In Section \ref{s:algorithm}, we describe the resulting algorithm. In Section \ref{s:multiscale}, we discuss the superiority of multiscale algorithms over non-multiscale ones. Finally, in Section \ref{s:numerical_results}, we give numerical results, and empirically study the stability of the underlying phase retrieval problem.

The source code, and several reconstruction examples, are available at:

{\small http://www-math.mit.edu/~waldspur/wavelets\_phase\_retrieval.html}

\section{Definitions and assumptions\label{s:notations}}

All signals $f[n]$ are of finite length $N$. Their discrete Fourier transform is defined by
\begin{equation*}
\hat f[k]=\underset{n=0}{\overset{N-1}{\sum}}f[n]e^{-2\pi i\frac{kn}{N}},
\quad\quad k=0,\dots,N-1.
\end{equation*}
The circular convolution of two signals $f$ and $g$ is
\begin{equation*}
f\star g[k]=\sum_{n=0}^{N-1}f[n]g[k-n],
\quad\quad k=0,\dots,N-1,
\end{equation*}
where we set $g[k-n]=g[k-n+N]$ when $k-n<0$.

We define a family of wavelets $(\psi_j)_{0\leq j\leq J}$ by
\begin{equation*}
\hat\psi_j[k]=\hat\psi(a^jk),
\quad\quad k=0,...,N-1.
\end{equation*}
where the \textit{dilation factor} $a$ can be any number in $]1;+\infty[$ and $\psi:\R\to\C$ is a fixed \textit{mother wavelet}. We assume that $J$ is sufficiently large so that $\hat\psi_J$ is negligible outside a small set of points. An example is shown in Figure \ref{fig:discrete_wavelets}.
\begin{figure}
\begin{center}
\begin{minipage}[b]{0.48\textwidth}
  \centering
%
%
\begin{tikzpicture}[scale=0.65]

\begin{axis}[%
scale only axis,
every outer x axis line/.append style={black},
every x tick label/.append style={font=\color{black}},
xmin=0,
xmax=10,
xlabel={\Large $k$},
every outer y axis line/.append style={black},
every y tick label/.append style={font=\color{black}},
ymin=0,
ymax=1,
ylabel={\Large$\psi_j[k]$},
axis background/.style={fill=white},
axis x line*=bottom,
axis y line*=left
]
\addplot [color=black,solid,line width=3.0pt,forget plot]
  table[row sep=crcr]{%
0	0\\
1	1\\
2	0.398296546942912\\
3	0.0669263087699917\\
4	0.00789822746154749\\
5	0.000768026544166026\\
6	6.60749012283944e-05\\
7	5.22388305243643e-06\\
8	3.8822709390909e-07\\
9	2.75207308277946e-08\\
10	1.87952881653908e-09\\
};
\addplot [color=gray,solid,line width=3.0pt,forget plot]
  table[row sep=crcr]{%
0	0\\
1	0.560211133792258\\
2	1\\
3	0.753064290500951\\
4	0.398296546942912\\
5	0.173578070910036\\
6	0.0669263087699917\\
7	0.0237134923700884\\
8	0.00789822746154749\\
9	0.00250925894699571\\
10	0.000768026544166026\\
};
\addplot [color=black,solid,line width=3.0pt,forget plot]
  table[row sep=crcr]{%
0	0\\
1	0.148245872443102\\
2	0.560211133792258\\
3	0.893109382008472\\
4	1\\
5	0.922590923322294\\
6	0.753064290500951\\
7	0.564873969136242\\
8	0.398296546942912\\
9	0.267881823891104\\
10	0.173578070910036\\
};
\addplot [color=gray,solid,line width=3.0pt,forget plot]
  table[row sep=crcr]{%
0	0\\
1	0.0269620589571623\\
2	0.148245872443102\\
3	0.343871320798658\\
4	0.560211133792258\\
5	0.752006066630379\\
6	0.893109382008472\\
7	0.974730576589928\\
8	1\\
9	0.978581805153552\\
10	0.922590923322294\\
};
\addplot [color=black,solid,line width=3.0pt,forget plot]
  table[row sep=crcr]{%
0	0\\
1	0.00406530638760013\\
2	0.0269620589571623\\
3	0.0754391203703468\\
4	0.148245872443102\\
5	0.240039345518338\\
6	0.343871320798658\\
7	0.452695430010601\\
8	0.560211133792258\\
9	0.661270407216568\\
10	0.752006066630379\\
};
\end{axis}
\end{tikzpicture}%
  \caption{Example of wavelets; the figure shows $\psi_j$ for $J-5\leq j\leq J$ in the Fourier domain. Only the real part is displayed.\label{fig:discrete_wavelets}}
\end{minipage}
\end{center}
\end{figure}
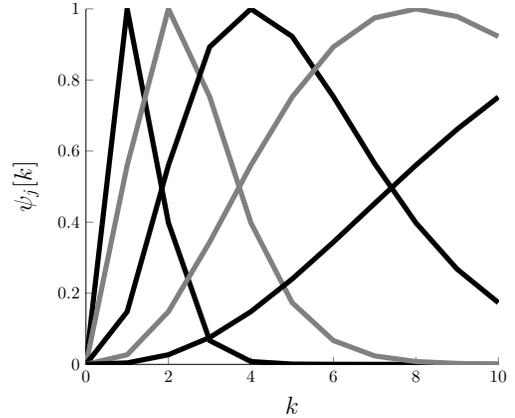

The \textit{wavelet transform} is defined by:
\begin{equation*}
\forall f\in\C^N,\quad Wf=\{f\star\psi_j\}_{0\leq j\leq J}.
\end{equation*}
The problem we consider here consists in reconstructing functions from the modulus of their wavelet transform:
\begin{equation}\label{eq:def_problem}
\mbox{Reconstruct }f\mbox{ from }\{|f\star\psi_j|\}_{0\leq j\leq J}.
\end{equation}
Multiplying a function by a unitary complex does not change the modulus of its wavelet transform, so we only aim at reconstructing functions up to multiplication by a unitary complex, that is \emph{up to a global phase}.

All signals are assumed to be analytic:
\begin{equation}\label{eq:analyticity_discrete}
\hat f[k]=0\quad\mbox{when }N/2<k\leq N-1.
\end{equation}
Equivalently, we could assume the signals to be real but set the $\hat\psi_j[k]$ to zero for $N/2<k\leq N-1$.

\section{Well-posedness of the inverse problem\label{s:well_posedness}}

Developing an algorithm to solve Problem \eqref{eq:def_problem} makes sense only if the problem is well-posed: $f$ must be uniquely determined from $\{|f\star\psi_j|\}_{0\leq j\leq J}$, up to a global phase, and reconstruction must satisfy a form of stability to noise.

Unfortunately, determining whether a given phase retrieval problem is well-posed or not is a notoriously difficult question that, despite decades of works, has been solved only in a handful of cases. In the case of the scalogram (or spectrogram), a theoretical study of the well-posedness, for relatively general wavelets, seems out of reach. However, we can reasonably conjecture that the inverse problem is well-posed for \textit{generic} wavelet families.

Indeed, it has been proven in several cases that reconstruction is unique, as soon as the sampling rate is high enough (for Cauchy wavelets \citep{waldspurger} and for real band-limited wavelets \citep{alaifari}; in the case of the spectrogram, it is known, for general windows, that at least almost all signals are uniquely determined \citep{jaganathan}). By contrast, there is no case in which uniqueness is known not to hold, except for families which exhibit a severe lack of redundancy (Shannon wavelets, for example).

It has been established in \citep{waldspurger} that, when reconstruction is unique, it is also stable, in the sense that the reconstruction operator is continuous (although it may not be uniformly continuous). Moreover, in the two cases where this question of stability could be studied in more detail (the Cauchy wavelet transform in \citep{waldspurger} and the Gabor spectrogram in \citep{alaifari_2}), it was shown that the reconstruction operator actually enjoyed a stronger property of \textit{local stability} (the stability constants depend on the wavelet family; reconstruction is more stable when the wavelets are more redundant, that is, when the overlap between their Fourier supports is large).

In addition to these theoretical works, various numerical studies support our conjecture regarding the well-posedness of the phase retrieval problem. In \citep[IV-C]{nawab}, for example, it is observed that, when the reconstruction algorithm succeeds in finding a signal whose spectrogram matches the target, then this signal is indistinguishable from the correct solution. The experiments in \citep{griffin_lim,virtanen} show that this is also true when the spectrogram has been subject to various deformations, indicating that reconstruction enjoys a form of stability. For short signals, stability is also numerically established in \citep{jaganathan}. Our own experiments in Section \ref{s:numerical_results} are in line with these works: we never observe uniqueness issues, and we empirically demonstrate in Paragraph \ref{ss:stability} that reconstruction satisfies the previously mentioned \textit{local stability} property.

\section{Reformulation of the phase retrieval problem\label{s:reformulation}}

In the first part of this section, we reformulate the phase retrieval problem for the wavelet transform, by introducing two auxiliary wavelet families.

We then describe the two main advantages of this reformulation. First, it allows to propagate the phase information from the low-frequencies to the high ones, and so enables us to perform the reconstruction scale by scale. Second, from this reformulation, we can define a natural objective function to locally optimize approximate solutions. Although non-convex, this function has few local minima; hence, the local optimization algorithm is efficient.

\subsection{Introduction of auxiliary wavelets and reformulation}

\begin{figure}
\begin{center}
\begin{minipage}[b]{0.48\textwidth}
\begin{center}
\input{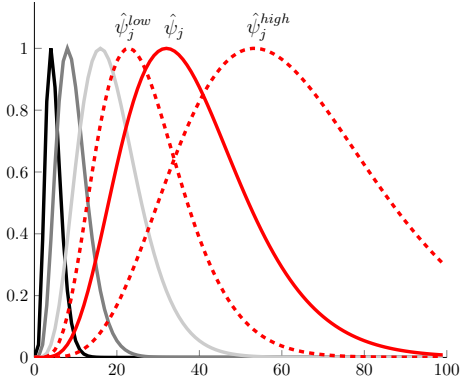}
\caption{$\psi_J,...,\psi_{j+1},\psi_j$ (in the Fourier domain), along with $\psi_j^{low}$ and $\psi_j^{high}$ (dashed lines), renormalized so as to have a maximum value of $1$. \label{fig:psi_low_high}}
\end{center}
\end{minipage}
\end{center}
\end{figure}

Let us fix $r\in]0;1[$, and set $r_j=r^{a^j}$. We define:
\begin{align*}
\forall k=0,...,N-1,\quad\quad
\hat\psi_j^{low}[k]&=\hat\psi_j[k]r_j^k;\\
\hat\psi_j^{high}[k]&=\hat\psi_j[k]r_j^{-k}.
\end{align*}
This definition is illustrated by Figure \ref{fig:psi_low_high}. The wavelet $\hat\psi_j^{low}$ has a lower characteristic frequency than $\hat\psi_j$ and $\hat\psi_j^{high}$ a higher one. The following theorem explains how to rewrite a condition on the modulus of $f\star\psi_j$ as a condition on $f\star\psi_j^{low}$ and $f\star\psi_j^{high}$.
\begin{thm}\label{thm:reformulation}
Let $j\in\{0,...,J\}$ and $g_j\in(\R^+)^N$ be fixed. Let $Q_j$ be the function whose Fourier transform is
\begin{gather}\label{eq:def_Qj}
\hat Q_j[k] = r_j^k \widehat{g_j^2}[k],\\
\forall k=\left\lfloor\frac{N}{2}\right\rfloor-N+1,...,\left\lfloor\frac{N}{2}\right\rfloor.\nonumber
\end{gather}
For any $f\in\C^N$ satisfying the analyticity condition \eqref{eq:analyticity_discrete}, the following two properties are equivalent:
\begin{enumerate}
\item $|f\star\psi_j|=g_j$;
\item $(f\star\psi_j^{low})\overline{(f\star\psi_j^{high})} = Q_j$.
\end{enumerate}
\end{thm}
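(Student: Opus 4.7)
The plan is to reduce both conditions to identities between discrete Fourier transforms. Set $h := f \star \psi_j$; the definitions of $\psi_j^{low}$ and $\psi_j^{high}$ immediately yield
\[
\widehat{f\star\psi_j^{low}}[k] = \hat h[k]\,r_j^k,
\qquad
\widehat{f\star\psi_j^{high}}[k] = \hat h[k]\,r_j^{-k},
\qquad k=0,\ldots,N-1,
\]
and the analyticity condition \eqref{eq:analyticity_discrete} forces $\hat h$ to be supported in $\{0,\ldots,\lfloor N/2\rfloor\}$.

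Writing $A := f\star\psi_j^{low}$ and $B := f\star\psi_j^{high}$, I would compute the DFT of $A\bar B$ via the convolution theorem, combined with the identity $\widehat{\bar B}[m]=\overline{\hat B[(-m)\bmod N]}$. This gives
\[
\widehat{A\bar B}[k] = \frac{1}{N}\sum_{\ell} \hat h[\ell]\,\overline{\hat h[(\ell-k)\bmod N]}\; r_j^{\ell}\,r_j^{-((\ell-k)\bmod N)}.
\]
The support restriction on $\hat h$ forces both $\ell$ and $(\ell-k)\bmod N$ into $\{0,\ldots,\lfloor N/2\rfloor\}$; for such pairs the exponent $\ell-((\ell-k)\bmod N)$ always equals the centered lift of $k$ in $\{\lfloor N/2\rfloor - N+1,\ldots,\lfloor N/2\rfloor\}$. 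Pulling this factor out of the sum and recognising the remainder as the same expression with $r_j=1$, I obtain the key identity
\[
\widehat{A\bar B}[k] = r_j^{k}\,\widehat{|h|^2}[k].
\]

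From this point the equivalence is immediate: the right-hand side equals $\hat Q_j[k]$ precisely when $\widehat{|h|^2}=\widehat{g_j^2}$, that is, by injectivity of the DFT and non-negativity of both sides, when $|h|=g_j$, which is statement~(1). The main obstacle I anticipate is the cyclic-indexing step above: the weight $r_j^{\ell}\,r_j^{-((\ell-k)\bmod N)}$ is non-periodic in its arguments, and verifying that the analyticity of $f$ eliminates every "wrong" lift of $\ell-k \bmod N$ so that exactly the centered lift of $k$ survives will require a careful case analysis on the sign of $\ell-k$ and on whether $k$ lies in the positive or negative half of the centered range.
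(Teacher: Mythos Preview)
Your approach is essentially the same as the paper's: the paper also reduces the equivalence to the Fourier identity $\widehat{|f\star\psi_j|^2}[k]\,r_j^{k}=\widehat{(f\star\psi_j^{low})\overline{(f\star\psi_j^{high})}}[k]$, proved by exactly the convolution computation you outline, and the analyticity of $f$ is used in the same way to kill the wrap-around terms. The only cosmetic difference is that the paper packages this identity as a statement about Laurent polynomials $P(h)(z)=\sum_k \hat h[k]z^k$ (so that the identity reads $P(|f\star\psi_j|^2)(r_jz)=P((f\star\psi_j^{low})\overline{(f\star\psi_j^{high})})(z)$, an ``analytic extension''), but the underlying computation and the role of the support condition are identical to what you wrote.
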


\begin{proof}
The proof consists in showing that the second equality is the analytic extension of the first one, in a sense that will be precisely defined.

For any function $h:\{0,...,N-1\}\to\C$, let $P(h)$ be
\begin{equation*}
\forall z\in\C,\quad\quad
P(h)(z)=\underset{k=\left\lfloor\frac{N}{2}\right\rfloor-N+1}{\overset{\left\lfloor\frac{N}{2}\right\rfloor}{\sum}}\hat h[k]z^k.
\end{equation*}
Up to a change of coordinates, $P({|f\star\psi_j|^2})$ and $P({g_j^2})$ are equal to $P({(f\star\psi_j^{low})\overline{(f\star\psi_j^{high})}})$ and $P({Q_j})$:
\begin{lem}\label{lem:extension}
For any $f$ satisfying the analyticity condition \eqref{eq:analyticity_discrete}, and for any $z\in\C$:
\begin{align*}
&P({|f\star\psi_j|^2})(r_jz)=P({(f\star\psi_j^{low})\overline{(f\star\psi_j^{high})}})(z)\\
\mbox{and}\quad&P({g_j^2})(r_jz)=P({Q_j})(z).
\end{align*}
\end{lem}
This lemma is proved in the appendix \ref{app:extension}. It implies the result because then,
\begin{align*}
&|f\star\psi_j|=g_j\\
\iff& |f\star\psi_j|^2=g_j^2\\
\iff&\forall z, P({|f\star\psi_j|^2})(z)=P({g_j^2})(z)\\
\iff&\forall z, P({|f\star\psi_j|^2})(r_jz)=P({g_j^2})(r_jz)\\
\iff&\forall z,P({(f\star\psi_j^{low})\overline{(f\star\psi_j^{high})}})(z)=P({Q_j})(z)\\
\iff& (f\star\psi_j^{low})\overline{(f\star\psi_j^{high})}=Q_j.
\end{align*}
\end{proof}

By applying simultaneously Theorem \ref{thm:reformulation} to all indexes $j$, we can reformulate the phase retrieval problem $|f\star\psi_j|=g_j,\forall j$ in terms of the $f\star\psi_j^{low}$'s and $f\star\psi_j^{high}$'s.
\begin{cor}[Reformulation of the phase retrieval problem]
Let $(g_j)_{0\leq j\leq J}$ be a family of signals in $(\R^+)^N$. For each $j$, let $Q_j$ be defined as in \eqref{eq:def_Qj}. Then the following two problems are equivalent.
\begin{align*}
&\begin{minipage}{0.42\textwidth}
\begin{center}
Find $f$ satisfying \eqref{eq:analyticity_discrete} such that:
\begin{equation*}
\forall j,\quad
|f\star\psi_j|=g_j.
\end{equation*}
\end{center}
\end{minipage}\\
&\\
\iff&
\begin{minipage}{0.42\textwidth}
\begin{center}
Find $f$ satisfying \eqref{eq:analyticity_discrete} such that:
\begin{equation}\label{eq:reformulation}
\forall j,\quad
(f\star\psi_j^{low})\overline{(f\star\psi_j^{high})} = Q_j.
\end{equation}
\end{center}
\end{minipage}
\end{align*}

\end{cor}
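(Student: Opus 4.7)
The plan is straightforward: the corollary follows by applying Theorem \ref{thm:reformulation} index by index and then taking the conjunction over $j$. Concretely, I would fix a signal $f \in \C^N$ satisfying the analyticity condition \eqref{eq:analyticity_discrete} and, for each $j \in \{0,\ldots,J\}$, invoke the theorem with the given $g_j$ and the correspondingly defined $Q_j$. This immediately gives, for each $j$ individually, the equivalence
\begin{equation*}
|f\star\psi_j| = g_j \iff (f\star\psi_j^{low})\overline{(f\star\psi_j^{high})} = Q_j.
\end{equation*}

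Next, I would observe that both problems under consideration are existence problems of the form ``find an analytic $f$ such that for all $j$, [condition$_j$] holds.'' Since the per-$j$ equivalence above holds for the \emph{same} $f$ on both sides (it is the same analyticity hypothesis in both statements), one can conjunct over $j \in \{0,\ldots,J\}$ to obtain that the full system $\{|f\star\psi_j| = g_j\}_j$ is equivalent to the full system $\{(f\star\psi_j^{low})\overline{(f\star\psi_j^{high})} = Q_j\}_j$. Therefore the set of analytic solutions is the same for both formulations, which is exactly the claimed equivalence of problems.

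There is essentially no obstacle here, since all the real work is done in Theorem \ref{thm:reformulation}. The only subtle point worth mentioning explicitly is that the definition of $Q_j$ in the corollary is literally the definition from the theorem applied to the index $j$, so no compatibility or consistency condition across different $j$'s has to be verified. Consequently, the proof reduces to a one-line argument: apply the theorem pointwise in $j$ and take the conjunction.
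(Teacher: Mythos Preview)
Your proposal is correct and matches the paper's approach exactly: the paper does not even write out a separate proof, stating only that the corollary follows ``by applying simultaneously Theorem \ref{thm:reformulation} to all indexes $j$.'' Your explicit unpacking of this---apply the theorem per $j$, then conjunct---is precisely what is intended.
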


\subsection{Phase propagation across scales\label{ss:phase_propagation}}

This new formulation yields a natural multiscale reconstruction algorithm, in which one reconstructs $f$ frequency band by frequency band, starting from the low frequencies.

Indeed, once $f\star\psi_J,...,f\star\psi_{j+1}$ have been reconstructed, it is possible to estimate $f\star\psi_j^{low}$ by deconvolution. This deconvolution is stable to noise because, if $r$ is sufficiently small, then the frequency band covered by $\psi_j^{low}$ is almost included in the frequency range covered by $\psi_J,...,\psi_{j+1}$ (see Figure \ref{fig:psi_low_high}). From $f\star\psi_j^{low}$, one can reconstruct $f\star\psi_j^{high}$, using \eqref{eq:reformulation}:
\begin{equation}\label{eq:rec_high}
f\star\psi_j^{high}=\frac{\overline{Q}_j}{\overline{f\star\psi_j^{low}}}.
\end{equation}
Finally, one reconstructs $f\star\psi_j$ from $f\star\psi_j^{high}$ and $f\star\psi_j^{low}$.

The classical formulation of the phase retrieval problem does not allow the conception of such a multiscale algorithm. Indeed, from $f\star\psi_J,...,f\star\psi_{j+1}$, it is not possible to directly estimate $f\star\psi_j$: it would require performing a highly unstable deconvolution. The introduction of the two auxiliary wavelet families is essential.

\subsection{Local optimization of approximate solutions\label{ss:local_optimization}}

From the reformulation \eqref{eq:reformulation}, we can define a natural objective function for the local optimization of approximate solutions to the phase retrieval problem. Despite its non-convexity, this function empirically seems to have less local minima than more classical objective functions. It is thus easier to globally minimize.

\nl
The objective function has $2J+3$ variables: $(h_j^{low})_{0\leq j\leq J}$, $(h_j^{high})_{0\leq j\leq J}$ and the analytic function $f$. The intuition is that $f$ is the signal we aim at reconstructing and the $h_j^{low},h_j^{high}$ correspond to the $f\star\psi_j^{low}$'s and $f\star\psi_j^{high}$'s. The objective function is
\begin{align}
&\mbox{obj}(h_J^{low},...,h_0^{low},h_J^{high},...,h_0^{high},f)\nonumber\\
=&\hskip 0.7cm
\underset{j=0}{\overset{J}{\sum}}\,||h_j^{low}\overline{h_j^{high}}-Q_j||_2^2\nonumber\\
&+\lambda\,\underset{j=0}{\overset{J}{\sum}}\left(||f\star\psi_j^{low}-h_j^{low}||_2^2+||f\star\psi_j^{high}-h_j^{high}||_2^2\right).\label{eq:objective}
\end{align}
We additionally constrain the variables $(h_j^{low})_{0\leq j\leq J}$ and $(h_j^{high})_{0\leq j\leq J}$ to satisfy
\begin{equation}\label{eq:additional_constraint}
\forall j=0,...,J-1,
\quad\quad h_j^{low}\star\psi_{j+1}^{high}=h_{j+1}^{high}\star\psi_j^{low}.
\end{equation}
The first term of the objective ensures that the equalities \eqref{eq:reformulation} are satisfied, while the second term and the additional constraint \eqref{eq:additional_constraint} enforce the fact that the $h_j^{low}$'s and $h_j^{high}$'s must be the wavelet transforms of the \emph{same} function $f$.

The parameter $\lambda$ is a positive real number. In our implementation, we choose a small $\lambda$, so that the first term dominates over the second one.
\nl

If we had not introduced auxiliary wavelets, an objective function that we could have considered would have been
\begin{align}
&\mbox{classical obj}(h_J,...,h_0,f)\nonumber\\
&\quad\quad=\,
\underset{j=0}{\overset{J}{\sum}}\,||\,|h_j|^2-g_j^2||_2^2
+\lambda\,\underset{j=0}{\overset{J}{\sum}}||f\star\psi_j-h_j||_2^2.\label{eq:classical_objective}
\end{align}
However, this function seems to have more local minima, and, when minimized with a first-order method, does not produce reconstruction results of the same quality as Equation \eqref{eq:objective}. This phenomenon is illustrated by Figure \ref{fig:critical_points}: starting from random initializations, we have used a first-order method to minimize one or the other objective function, and have recorded how many times we have hit a local minimum, instead of the global one. Provided that $r$ is well chosen, we see that our new formulation performs better than the classical one.

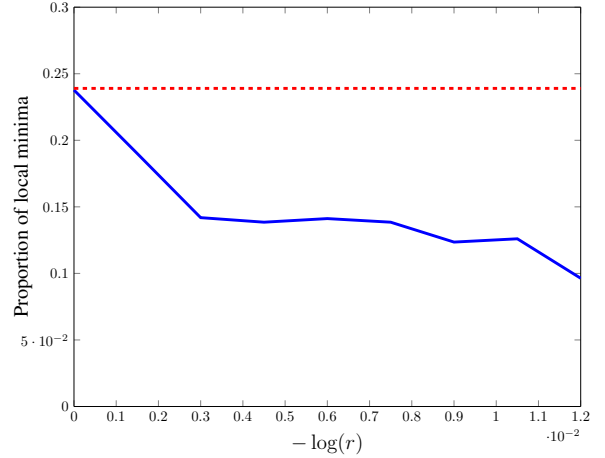
\begin{figure}
\begin{center}
\begin{minipage}[b]{0.48\textwidth}
\begin{center}
%
%
\begin{tikzpicture}[scale=0.55]

\begin{axis}[%
width=4.822in,
height=3.803in,
at={(0.809in,0.513in)},
scale only axis,
separate axis lines,
every outer x axis line/.append style={black},
every x tick label/.append style={font=\color{black}},
xmin=0,
xmax=0.012,
every outer y axis line/.append style={black},
every y tick label/.append style={font=\color{black}},
ymin=0,
ymax=0.3,
axis background/.style={fill=white},
xlabel={\Large $-\log(r)$},
ylabel={\Large Proportion of local minima}
]
\addplot [color=blue,solid,line width=2.0pt,forget plot]
  table[row sep=crcr]{%
0	0.237764705882353\\
0.0015	0.19\\
0.003	0.141882352941176\\
0.0045	0.138470588235294\\
0.006	0.141176470588235\\
0.0075	0.138470588235294\\
0.009	0.123529411764706\\
0.0105	0.126\\
0.012	0.0963529411764706\\
};
\addplot [color=red,dashed,line width=2.0pt,forget plot]
  table[row sep=crcr]{%
0	0.239\\
0.0015	0.239\\
0.003	0.239\\
0.0045	0.239\\
0.006	0.239\\
0.0075	0.239\\
0.009	0.239\\
0.0105	0.239\\
0.012	0.239\\
};
\end{axis}
\end{tikzpicture}%
\caption{Proportion of local minima, for the objective functions \eqref{eq:objective} (solid line) and \eqref{eq:classical_objective} (dashed line), as a function of $-\log(r)$. The second proportion is independent from $r$. Signals were realizations of Gaussian random processes, with $500$ points; the wavelets were dyadic Morlet and there was no noise.\label{fig:critical_points}}
\end{center}
\end{minipage}
\end{center}
\end{figure}

\subsection{Analysis for Cauchy wavelets\label{ss:analysis_cauchy}}

In this section, we give a partial theoretical justification of the fact that introducing auxiliary wavelets reduces the number of local minima. Indeed, in the case of Cauchy wavelets and in the absence of noise, we show that a modified version of our objective function \eqref{eq:objective} has no spurious critical point, in the limit $\lambda\to 0^+$.

The idea is that we have an explicit expression for the manifold of stable critical points of the first part of the objective function. This expression is sufficiently simple so that we can easily compute the critical points of the (modified) second part, when restricted to that manifold.

Cauchy wavelets are parametrized by numbers $p_1,p_2>0$:
\begin{equation*}
\hat \psi(x)=x^{p_1}e^{-p_2 x}\quad\quad \forall x\in\R^+.
\end{equation*}

We define $(\psi_j^{low})_{0\leq j\leq J},(\psi_j^{high})_{0\leq j\leq J}$ with the following choice for $r$:
\begin{equation*}
r=e^{-p_2\left(\frac{a-1}{a+1}\right)}.
\end{equation*}
With this choice, we see that
\begin{equation*}
\psi_{j+1}^{high}=a^{p_1}\psi_j^{low}\quad\quad j=0,\dots,J-1,
\end{equation*}
and we naturally replace Condition \eqref{eq:additional_constraint} by
\begin{equation}\label{eq:additional_constraint_mod}
\forall j=0,\dots,J-1,\quad\quad
h_{j+1}^{high}=a^{p_1}h_j^{low}.
\end{equation}
In the limit $\lambda\to 0^+$, the critical points of the objective function are included in the critical points of the first term
\begin{align*}
\mathrm{obj}_1(h_J^{low},&\dots,h_0^{low},h_J^{high},\dots,h_0^{high})\\
&=\sum_{j=0}^J||h_j^{low}\overline{h_j^{high}}-Q_j||^2_2.
\end{align*}
We say that $\mathrm{obj}_1$ is \textit{strict saddle} at a critical point if its Hessian has at least one strictly negative eigenvalue. Strict saddle critical points can be escaped from by first-order methods \citep{ge,lee}, so their presence is not too harmful.

The critical points that are not strict saddle have a simple structure, described by the following proposition (proven in Appendix \ref{s:critical_points}).
\begin{prop}\label{prop:critical_points_1}
Let $f_{rec}$ be the function that we aim at reconstructing. We assume that, for any $j=0,\dots,J$, the functions $f_{rec}\star\psi_j^{low}$ and $f_{rec}\star\psi_j^{high}$ have no zero entry. Then, on the vector space defined by Equation \eqref{eq:additional_constraint_mod}, the critical points of $\mathrm{obj}_1$ that are not strict saddle are
\begin{align*}
\Big\{
\Big(&\gamma (f_{rec}\star \psi_{J}^{low}),\frac{1}{\overline{\gamma}}(f_{rec}\star \psi_{J-1}^{low}),\gamma (f_{rec}\star \psi_{J-2}^{low}),\dots,\\
&\frac{1}{\overline{\gamma}}(f_{rec}\star\psi_J^{high}),\gamma (f_{rec}\star\psi_{J-1}^{high}),\frac{1}{\overline{\gamma}}(f_{rec}\star\psi_{J-2}^{high}),\dots\Big),\\
&\quad\quad\mbox{such that } \gamma\in (\C-\{0\})^N\Big\}
\quad\overset{def}{=}\quad\mathcal{C}_1
.
\end{align*}
\end{prop}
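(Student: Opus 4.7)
The plan is to first use the constraint~\eqref{eq:additional_constraint_mod} to eliminate $h_j^{high}$ for every $j\geq 1$, so that the free variables become $(h_0^{low},\dots,h_J^{low},h_0^{high})$. Under this parametrization, $\mathrm{obj}_1$ becomes a sum over the spatial index $k$ of expressions depending only on the values of the variables at that single $k$, namely
\[
\mathrm{obj}_1 \;=\; \sum_{k=0}^{N-1}\Big( \big|h_0^{low}[k]\overline{h_0^{high}[k]} - Q_0[k]\big|^2 + \sum_{j=1}^J \big|a^{p_1} h_j^{low}[k]\overline{h_{j-1}^{low}[k]} - Q_j[k]\big|^2 \Big).
\]
The Hessian is therefore block-diagonal in $k$, and the whole critical-point and strict-saddle analysis reduces to a pointwise problem in $\mathbb{C}^{J+2}$, treated independently at each $k$.

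At a fixed $k$, let $e_0 := h_0^{low}\overline{h_0^{high}} - Q_0$ and $e_j := a^{p_1}h_j^{low}\overline{h_{j-1}^{low}} - Q_j$ for $j\geq 1$. Computing the pointwise Wirtinger derivatives yields the first-order conditions $h_0^{low}\overline{e_0} = 0$, $h_0^{high} e_0 + a^{p_1}h_1^{low}\overline{e_1} = 0$, $h_{j-1}^{low} e_j + h_{j+1}^{low}\overline{e_{j+1}} = 0$ for $0<j<J$, and $h_{J-1}^{low} e_J = 0$. Using Theorem~\ref{thm:reformulation} together with $\psi_{j+1}^{high} = a^{p_1}\psi_j^{low}$, the hypothesis that $f_{rec}\star\psi_j^{low}$ and $f_{rec}\star\psi_j^{high}$ have no zero entry forces $Q_j[k]\neq 0$. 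When all pointwise coordinates are nonzero at $k$, an induction from $j=J$ downwards propagates $e_j = 0$ for every $j$. Writing $\alpha_j := f_{rec}\star\psi_j^{low}$, $\beta_j := f_{rec}\star\psi_j^{high}$ (so that $Q_j = a^{p_1}\alpha_j\overline{\alpha_{j-1}}$ for $j\geq 1$) and setting $\gamma_j := h_j^{low}/\alpha_j$, the vanishing $e_j=0$ rewrites as $\gamma_j\overline{\gamma_{j-1}} = 1$, forcing the alternating pattern $\gamma_j = \gamma_0$ for even $j$ and $\gamma_j = 1/\overline{\gamma_0}$ for odd $j$. The equation $e_0=0$ then yields $h_0^{high} = \beta_0/\overline{\gamma_0}$, while $h_j^{high} = a^{p_1} h_{j-1}^{low} = \gamma_{j-1}\beta_j$ for $j\geq 1$. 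Reading the resulting tuple from $j=J$ downwards, and identifying $\gamma$ with $\gamma_0$ or $1/\overline{\gamma_0}$ according to the parity of $J$, recovers $\mathcal{C}_1$ exactly. These points achieve $\mathrm{obj}_1 = 0$, so they are global minima and a fortiori not strict saddle.

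The main obstacle is the converse: any critical point at which some pointwise coordinate vanishes at some $k$ must be shown to be strict saddle. Thanks to the pointwise decoupling, it suffices to exhibit a negative-curvature direction supported on that single $k$. For each combinatorial pattern of zeros among $h_0^{low}[k],\dots,h_J^{low}[k],h_0^{high}[k]$, I would expand $\mathrm{obj}_1$ to second order in a simultaneous perturbation of the vanishing coordinate and its nonvanishing neighbors. The linear terms cancel by the first-order conditions, but the $O(\epsilon^2)$ expansion of $|e_j|^2$ — when $h_j^{low}$ is moved off zero while $h_{j-1}^{low}$ is perturbed as well — produces a cross-term of the form $-2a^{p_1}\Re\big(\overline{Q_j}\,\delta_j\,\overline{\delta_{j-1}}\big)$ of indefinite sign, in addition to nonnegative diagonal terms. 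Choosing the perturbation so as to maximize this cross-term, and using the first-order relation $|h_{j-2}^{low}|^2 = |Q_{j-1}|^2/(a^{2p_1}|h_{j-1}^{low}|^2)$ satisfied at the surrounding nonzero sites to control the positive diagonal contributions, one obtains an explicit direction along which the pointwise Hessian is strictly negative. Isolated, consecutive and boundary ($j=0$ or $j=J$) patterns of zeros each require a small variant of this construction; the full case analysis is the technically involved part of the argument and is carried out in Appendix~\ref{s:critical_points}.
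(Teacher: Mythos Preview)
Your reduction to the free variables $(h_0^{low},\dots,h_J^{low},h_0^{high})$, the pointwise decoupling in $k$, and the inductive propagation of $e_j=0$ under the ``all coordinates nonzero'' assumption are correct and coincide with the paper's argument in different coordinates. The identification of the resulting alternating pattern with $\mathcal{C}_1$ is also fine.

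The part where your proposal diverges from the paper is the strict saddle analysis, and here your sketch is genuinely incomplete. You propose a separate case analysis over all zero patterns, trying to make the indefinite cross-term $-2a^{p_1}\Re(\overline{Q_j}\,\delta_j\,\overline{\delta_{j-1}})$ dominate the positive diagonal contributions. This can presumably be carried through, but it is laborious, and you do not actually do it; you defer the ``technically involved part'' to the appendix.

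The paper avoids this case analysis entirely by \emph{interleaving} the nonvanishing check with the structural induction rather than separating them. At step $j$ (going from $J$ downwards), it assumes the claimed form already holds for $j+1,\dots,J$, so that $h_{j'}^{low}\overline{h_{j'}^{high}}=Q_{j'}$ there. If then $h_j^{high}[k]=0$, the paper constructs a direction $S^1$ that scales the already-determined coordinates by alternating factors $h_{j'}^{low}\mapsto (1\pm\epsilon)h_{j'}^{low}$, $h_{j'}^{high}\mapsto(1\mp\epsilon)h_{j'}^{high}$ (compatible with the constraint) and also perturbs $h_j^{low}[k]$. Because the products $h_{j'}^{low}\overline{h_{j'}^{high}}$ are only modified by $(1-\epsilon^2)$ for $j'>j$, and because $h_j^{high}[k]=0$ kills the contribution at level $j$, one gets $d^2\mathrm{obj}_1(H).(S^1,S^1)=0$ exactly. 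A second tangent vector $S^2$, perturbing $h_j^{high}[k]$ (and its linked partner $h_{j-1}^{low}[k]$), then yields $d^2\mathrm{obj}_1(H).(S^1,S^2)\ne 0$ since $Q_j[k]\ne 0$. A quadratic form that vanishes on $S^1$ but has nonzero off-diagonal entry $(S^1,S^2)$ automatically has a negative eigenvalue --- no balancing of diagonal versus cross terms is needed. This null-direction trick, made possible by the inductive hypothesis, is the idea your sketch is missing.
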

The second member of the objective function \eqref{eq:objective} is difficult to directly analyze, but a modified version is approachable. We define
\begin{align*}
&\mathrm{obj}_2(h_J^{low},\dots,h_0^{high},f)\\
&=\sum_{j\equiv J[2]}||f\star\psi_j^{low}-h_j^{low}||_2^2+\sum_{j\not\equiv J[2]}||f\star\psi_j^{high}-h_j^{high}||_2^2.
\end{align*}
In the limit $\lambda\to 0^+$, the minimization problem becomes:
\begin{align}
\min\quad &\mathrm{obj}_2(h_J^{low},\dots,h_0^{high},f)\nonumber\\
\mbox{s.t.} \quad&(h_J^{low},\dots,h_0^{high},f)\in\mathcal{C}_1\times \mathcal{A},
\label{eq:objective_mod}
\end{align}
where $\mathcal{A}$ is the set of signals satisfying the analyticity conditions \eqref{eq:analyticity_discrete}.

The following theorem (proven in Appendix \ref{s:critical_points}), ensures that all critical points of this problem are the correct solution, up to affine scaling.
\begin{thm}\label{thm:critical_points_2}
We assume $J\geq 2$, and keep the hypotheses of Proposition \ref{prop:critical_points_1}.

Let $(h_J^{low},\dots,h_0^{low},h_J^{high},\dots,h_0^{high},f)$ be a critical point of $\mathrm{obj}_2$ on the real manifold $\mathcal{C}_1\times \mathcal{A}$. There exists $\alpha\in\C-\{0\},b\in\C$ such that
\begin{equation*}
f=\alpha f_{rec}+b.
\end{equation*}
\end{thm}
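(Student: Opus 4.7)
My plan is to view the minimisation on $\mathcal{C}_1\times\mathcal{A}$ as a nonnegative quadratic problem in $f$ alone, deduce that its critical points are zeros, and then use the Cauchy wavelet structure to identify them.

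By Proposition \ref{prop:critical_points_1}, $\mathcal{C}_1$ is parametrised by $\gamma\in(\C-\{0\})^N$. Plugging the explicit form of the $h$'s into $\mathrm{obj}_2$ gives
\[
\mathrm{obj}_2(h,f)=\sum_{j=0}^J\|v_j-\gamma\cdot u_j\|_2^2,
\]
where $v_j=f\star\phi_j$, $u_j=f_{rec}\star\phi_j$ and $\phi_j$ equals $\psi_j^{low}$ or $\psi_j^{high}$ according to the parity of $j-J$. At fixed $f$ this is strictly convex in each $\gamma(k)$ (nonvanishing hypothesis on $u_j$), so the critical-in-$\gamma$ condition determines $\gamma$ uniquely as a function of $f$, and the partial minimum is the nonnegative Hermitian quadratic form $G(f)=\sum_k\|P_{\vec u(k)^\perp}\vec v(k)\|_2^2$. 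Any critical point of $G$ on $\mathcal{A}$ must therefore be a global minimum with $G(f)=0$; equivalently, for each $k$ there exists $\beta(k)\in\C$ with $v_j(k)=\beta(k)u_j(k)$ for every effective $j$.

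Second, I would exploit the Cauchy wavelet structure to turn this collinearity into an algebraic condition on the Fourier coefficients of $f$. A direct computation shows $(f\star\psi_j^{low})(k)\propto \tilde F(\rho_j e^{2\pi ik/N})$, where $\tilde F(z)=\sum_{\ell>0}\hat f(\ell)\ell^{p_1}z^\ell$ is a polynomial and $\rho_j=e^{-2p_2 a^{j+1}/(a+1)}\in(0,1)$ is strictly decreasing in $j$. The identity $\psi_{j+1}^{high}=a^{p_1}\psi_j^{low}$ reduces the set of distinct sampling radii to $\{\rho_J,\rho_{J-2},\ldots\}$, which contains at least two elements because $J\geq 2$. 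For each pair of distinct effective radii $(\rho_m,\rho_{m'})$ and each $k$, collinearity becomes
\[
\tilde F(\rho_m e^{i\theta_k})\tilde F_{rec}(\rho_{m'} e^{i\theta_k})=\tilde F(\rho_{m'} e^{i\theta_k})\tilde F_{rec}(\rho_m e^{i\theta_k}),\quad \theta_k=2\pi k/N.
\]
The difference of the two sides is a trigonometric polynomial in $\theta$ with Fourier support in $\{2,\ldots,N\}$; vanishing at the $N$-th roots of unity (a full period) forces every Fourier coefficient to vanish. Writing $c_\ell=\hat f(\ell)\ell^{p_1}$, $c'_\ell=\hat f_{rec}(\ell)\ell^{p_1}$ and $s=\rho_m/\rho_{m'}\neq 1$, this reads, for every $n\geq 2$, $\sum_{\ell=1}^{n-1}c_\ell c'_{n-\ell}(s^\ell-s^{n-\ell})=0$.

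An induction on $n$ then closes the argument. The cases $n=3,4$ give $c_1/c'_1=c_2/c'_2=c_3/c'_3=:\alpha$; if $c_k=\alpha c'_k$ for all $2\leq k\leq n-2$, then after antisymmetrisation the $n$th equation collapses to $(c_1 c'_{n-1}-c_{n-1}c'_1)(s-s^{n-1})=0$, which forces $c_{n-1}=\alpha c'_{n-1}$ since $s\neq s^{n-1}$. Iterating, $\hat f(\ell)=\alpha\hat f_{rec}(\ell)$ for all $\ell\geq 1$; the DC coefficient $\hat f(0)$ is unconstrained (every $\phi_j$ kills constants), so $f=\alpha f_{rec}+b$ with $b\in\C$. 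The condition $\alpha\neq 0$ then follows from $\gamma^*(f)\equiv\alpha$ combined with the requirement $\gamma\in(\C-\{0\})^N$ baked into $\mathcal{C}_1$. The main obstacle in this plan is verifying the inductive collapse: one must check that the inner cancellations at indices $\ell\in\{2,\ldots,n-2\}$ really do occur under the inductive hypothesis, and that the surviving antisymmetric factor $s-s^{n-1}$ is nonzero---a condition that demands two \emph{distinct} effective radii, which is precisely what $J\geq 2$ guarantees.
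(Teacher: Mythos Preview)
Your reduction to $\mathrm{obj}_2=0$ is correct and cleaner than the paper's. The paper picks a specific tangent vector (namely $\delta=1$, $g=f$) and checks by hand that the directional derivative equals $2\,\mathrm{obj}_2$; you instead observe that, after minimising out $\gamma$, what remains is a nonnegative real quadratic form $G$ on the linear space $\mathcal{A}$, whose critical points are automatically global minima. Both routes yield the same collinearity condition $v_j(k)=\beta(k)u_j(k)$, i.e.\ the same polynomial identity
\[
\tilde F(\rho_m z)\,\tilde F_{rec}(\rho_{m'} z)=\tilde F(\rho_{m'} z)\,\tilde F_{rec}(\rho_m z)\qquad(z=e^{2\pi ik/N}),
\]
and your degree count showing this lifts to a polynomial identity in $z$ is fine.

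The gap is in the coefficient-matching induction. Your base case sets $\alpha=c_1/c'_1$, and your inductive step collapses the $n$th equation to $(c_1c'_{n-1}-c_{n-1}c'_1)(s-s^{n-1})=0$; from this you conclude $c_{n-1}=\alpha c'_{n-1}$. All of this tacitly assumes $c'_1=\hat f_{rec}[1]\neq 0$. But the hypotheses of Proposition~\ref{prop:critical_points_1} only say that $f_{rec}\star\psi_j^{low}$ and $f_{rec}\star\psi_j^{high}$ have no zero \emph{entries}; this constrains the values of $\tilde F_{rec}$ on certain circles, not its individual Fourier coefficients. If $\hat f_{rec}[1]=0$ your induction does not start, and the degenerate cases (shifting to the first nonzero coefficient, etc.) are not handled.

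The paper avoids this by staying at the level of rational functions: once the identity above holds for all $z$, one has $\dfrac{P(f)(x)}{P(f_{rec})(x)}=\dfrac{P(f)(\lambda x)}{P(f_{rec})(\lambda x)}$ with $\lambda=R_{J-2}/R_J>1$, iterates to $\lambda^s x$, and lets $s\to\infty$ to see that the ratio is constant. That argument only requires $P(f_{rec})$ to be nonzero on the relevant circles, which is exactly the stated hypothesis. You can plug this step in after your polynomial identity and the proof goes through; alternatively, you would need to either add the assumption $\hat f_{rec}[1]\neq 0$ or supply the missing degenerate-case analysis for your induction.
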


The modified problem \eqref{eq:objective_mod} is however less stable to noise that the original version \eqref{eq:objective}. In applications, we thus stick to the original formulation.


\section{Description of the algorithm\label{s:algorithm}}

In this section, we describe our implementation of the multiscale reconstruction algorithm introduced in Section \ref{s:reformulation}. We explain the general organization in Paragraph \ref{ss:organization}. We then describe our exhaustive search method for solving phase retrieval problems of very small size (paragraph \ref{ss:exhaustive_search}), which our algorithm uses to initialize the multiscale reconstruction. In Paragraph \ref{ss:error_correction}, we describe an additional multigrid correction step.

\subsection{Organization of the algorithm\label{ss:organization}}

We start by reconstructing $f\star\psi_J$ from $|f\star\psi_J|$ and $|f\star\psi_{J-1}|$. We use an exhaustive search method, described in the next paragraph \ref{ss:exhaustive_search}, which takes advantage of the fact that $\hat\psi_J$ and $\hat\psi_{J-1}$ have very small supports.

\nl
We then reconstruct the components of the wavelet transform scale by scale, as described in Section \ref{s:reformulation}.

At each scale, we reconstruct $f\star\psi_j^{low}$ by propagating the phase information coming from $f\star\psi_J,...,f\star\psi_{j+1}$ (as explained in Paragraph \ref{ss:phase_propagation}). This estimation can be imprecise, so we refine it by local optimization, using the objective function defined in Paragraph \ref{ss:local_optimization}, from which we drop all the terms with higher scales than $j$. The local optimization algorithm we use in the implementation is L-BFGS \citep{nocedal}, a low-memory approximation of a second order method.

We then reconstruct $f\star\psi_j^{high}$ by the equation \eqref{eq:rec_high}.

\nl
At the end of the reconstruction, we run a few steps of the classical Gerchberg-Saxton algorithm to further refine the estimation.

\nl
The pseudo-code \ref{alg:overview} summarizes the structure of the implementation.

\begin{algorithm}
\caption{overview of the algorithm\label{alg:overview}}
\begin{algorithmic}[1]
\REQUIRE $\{|f\star\psi_j|\}_{0\leq j\leq J}$
\STATE Initialization: reconstruct $f\star\psi_J$ by exhaustive search.
\FORALL{$j=J:(-1):0$}
\STATE Estimate $f\star\psi_j^{low}$ by phase propagation.
\STATE Refine the values of $f\star\psi_J^{low},...,f\star\psi_j^{low},f\star\psi_J^{high},...,f\star\psi_{j+1}^{high}$ by local optimization.
\STATE Do an error correction step.
\STATE Refine again.
\STATE Compute $f\star\psi_j^{high}$ by $f\star\psi_j^{high}=\overline{Q}_j/\overline{f\star\psi_j^{low}}$.
\ENDFOR
\STATE Compute $f$.
\STATE Refine $f$ with Gerchberg-Saxton.
\ENSURE $f$
\end{algorithmic}
\end{algorithm}

\subsection{Reconstruction by exhaustive search for small problems\label{ss:exhaustive_search}}

In this paragraph, we explain how to reconstruct $f\star\psi_j$ from $|f\star\psi_j|$ and $|f\star\psi_{j-1}|$ by exhaustive search, in the case where the support of $\hat\psi_j$ and $\hat\psi_{j-1}$ is small.

This is the method we use to initialize our multiscale algorithm. It is also useful for the multigrid error correction step described in the next paragraph \ref{ss:error_correction}.

\begin{lem}\label{lem:rec_small}
Let $m\in\R^N$ and $K\in\N^*$ be fixed. We consider the problem
\begin{align*}
\mbox{Find }g\in\C^N\mbox{ s.t. }&|g|=m\\
\mbox{and }&\mbox{Supp}(\hat g)\subset\{1,...,K\}.
\end{align*}
This problem has at most $2^{K-1}$ solutions, up to a global phase, and there exist a simple algorithm which, from $m$ and $K$, returns the list of all possible solutions.
\end{lem}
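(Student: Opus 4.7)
The plan is to translate the problem into a polynomial factorization question on the unit circle. Since $\hat g$ is supported in $\{1,\ldots,K\}$, the inverse DFT formula gives $g[n] = N^{-1}\omega^n P(\omega^n)$, where $\omega=e^{2\pi i/N}$ and $P(z)=\sum_{k=1}^K \hat g[k]\, z^{k-1}$ is a polynomial of degree at most $K-1$. Since $|\omega^n|=1$, the constraint $|g|=m$ is equivalent to prescribing the values of $|P|^2$ at the $N$-th roots of unity, and because $|P|^2$ is a trigonometric polynomial of bandwidth $\le 2(K-1)$, knowing $N \ge 2K-1$ of its values pins it down on the whole unit circle.

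Next I would introduce the auxiliary polynomial
\[
B(z) := z^{K-1}\, P(z)\, \overline{P}(1/z),
\]
where $\overline{P}$ denotes the polynomial whose coefficients are the conjugates of those of $P$. A direct check shows that $B$ is a genuine polynomial of degree at most $2(K-1)$ and that on the unit circle $B(z)=z^{K-1}|P(z)|^2$. Consequently the coefficients of $B$ are, up to an index shift, the DFT coefficients of $m^2$, so $B$ is computable from $m$ alone by a single DFT; this is the first routine step of the algorithm.

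The heart of the argument is then the factorization $B=P\cdot\tilde P$ with $\tilde P(z):=z^{K-1}\overline{P}(1/z)$. Hermitian symmetry of $m^2$ becomes the self-reciprocity identity $B(z)=z^{2(K-1)}\overline{B}(1/z)$, which forces the $2(K-1)$ roots of $B$ to pair up as $\{r,1/\overline{r}\}$; indeed, $\tilde P$ has roots $1/\overline{s}$ whenever $P$ has root $s$. Recovering $P$ thus amounts to choosing one root from each of the $K-1$ pairs to assign to $P$ (the others being assigned to $\tilde P$); the modulus of the leading coefficient of $P$ is then determined by matching the leading coefficient of $B$, and only the overall phase of $P$ remains free. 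This immediately yields the bound $2^{K-1}$ on the number of solutions up to a global phase, and produces the algorithm: compute $B$ by a DFT of $m^2$, find its roots, group them into reciprocal pairs, and enumerate the $2^{K-1}$ choices.

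The main obstacle is the degenerate case: roots of $B$ lying on the unit circle satisfy $r=1/\overline{r}$ and are self-paired, while repeated roots impose constraints on how the multiplicity splits between $P$ and $\tilde P$. Such situations strictly reduce the number of admissible $P$'s, which is precisely why the lemma asserts \emph{at most} $2^{K-1}$. I would sidestep a careful case analysis of multiplicities by having the algorithm enumerate all $2^{K-1}$ candidates and simply verify each against $m$, discarding those that do not match; correctness of the enumeration then follows from the factorization argument above without extra work.
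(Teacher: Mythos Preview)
Your proposal is correct and follows essentially the same approach as the paper: both associate to $g$ the polynomial $P(z)=\sum_{k=1}^K\hat g[k]z^{k-1}$, identify the constraint $|g|=m$ with knowledge of the Laurent polynomial $P(z)\overline{P}(1/z)$ (your $B$ is just this times $z^{K-1}$), observe that its roots come in pairs $\{r,1/\overline r\}$, and count $2^{K-1}$ ways to split them between $P$ and its reflected conjugate. Your explicit mention of the condition $N\ge 2K-1$ and of the degenerate cases (unit-modulus or repeated roots) is a nice addition that the paper leaves implicit.
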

\begin{proof}
This lemma is a consequence of classical results about the phase retrieval problem for the Fourier transform. It can for example be derived from \citep{hayes}. We give a proof in Appendix \ref{app:rec_small}.
\end{proof}
We apply this lemma to $m=|f\star\psi_j|$ and $|f\star\psi_{j-1}|$, and construct the lists of all possible $f\star\psi_j$'s and of all possible $f\star\psi_{j-1}$'s. The true $f\star\psi_j$ and $f\star\psi_{j-1}$ are the only pair in these two lists which satisfy the equality
\begin{equation*}
(f\star\psi_j)\star\psi_{j-1}=(f\star\psi_{j-1})\star\psi_j.
\end{equation*}
This solves the problem.

\nl
The number of elements in the lists is exponential in the size of the supports of $\hat\psi_j$ and $\hat\psi_{j-1}$, so this algorithm has a prohibitive complexity when the supports become large. Otherwise, our numerical experiments show that it works well.

\subsection{Error correction\label{ss:error_correction}}

When the modulus are noisy, there can be errors during the phase propagation step. The local optimization generally corrects them, if run for a sufficient amount of time, but, for the case where some errors are left, we add, at each scale, a multigrid error correction step. This step does not totally remove the errors but greatly reduces their amplitude.

\subsubsection{Principle}

First, we determine the values of $n$ for which $f\star\psi_j^{low}[n]$ and $f\star\psi_{j+1}^{high}[n]$ seem to have been incorrectly reconstructed. We use the fact that $f\star\psi_j^{low}$ and $f\star\psi_{j+1}^{high}$ must satisfy
\begin{equation*}
(f\star\psi_j^{low})\star\psi_{j+1}^{high}=(f\star\psi_{j+1}^{high})\star\psi_j^{low}.
\end{equation*}
The points where this equality does not hold provide a good estimation of the places where the values of $f\star\psi_j^{low}$ and $f\star\psi_{j+1}^{high}$ are erroneous.

\nl
\begin{figure}
\begin{center}
\begin{minipage}[b]{0.4\textwidth}
\begin{center}
\input{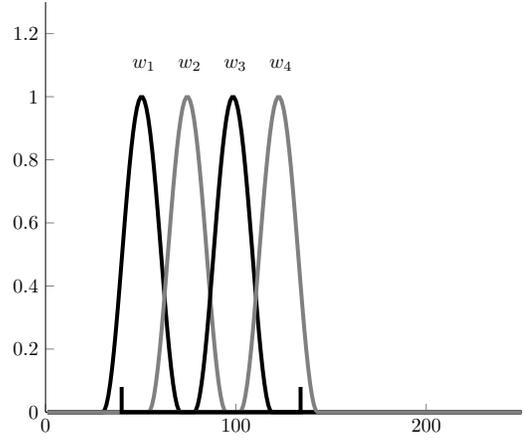}
\caption{Four window signals, whose supports cover the interval in which errors have been detected\label{fig:windows}}
\end{center}
\end{minipage}
\end{center}
\end{figure}

We then construct a set of smooth ``windows'' $w_1,...,w_S$, whose supports cover the interval on which errors have been found (see figure \ref{fig:windows}), such that each window has a small support. For each $s$, we reconstruct $(f\star\psi_j^{low}).w_s$ and $(f\star\psi_{j+1}^{high}).w_s$, by expressing these functions as the solutions to phase retrieval problems of small size, which we can solve by the exhaustive search method described in Paragraph \ref{ss:exhaustive_search}.

If $w_s$ is sufficiently smooth so that it can be considered constant on the support of $\psi_j$, we have, for $k=0,\dots,N-1$,
\begin{align*}
\left|(f.w_s)\star\psi_j[k]\right|
&=\left|\sum_{n=0}^{N-1}\psi_j[n]w_s[k-n]f[k-n] \right|\\
&\approx\left|\sum_{n=0}^{N-1}\psi_j[n]w_s[k]f[k-n] \right|\\
&=w_s[k]\left|f\star\psi_j[k]\right|.
\end{align*}
The same reasoning applies for $\psi_{j+1}$:
\begin{gather*}
|(f.w_s)\star\psi_j|\approx w_s|f\star\psi_j|;\\
|(f.w_s)\star\psi_{j+1}|\approx w_s|f\star\psi_{j+1}|.
\end{gather*}
The wavelets $\psi_j$ and $\psi_{j+1}$ have a small support in the Fourier domain, if we truncate them to the support of $w_s$, so we can solve this problem by exhaustive search, and reconstruct $(f.w_s)\star\psi_j$ and $(f.w_s)\star\psi_{j+1}$.

From $(f.w_s)\star\psi_j$ and $(f.w_s)\star\psi_{j+1}$, we reconstruct $(f\star\psi_j^{low}).w_s\approx (f.w_s)\star\psi_j^{low}$ and $(f\star\psi_{j+1}^{high}).w_s\approx (f.w_s)\star\psi_{j+1}^{high}$ by deconvolution.

There is a trade-off to reach in the size of the support of the $w_s$: the approximation error in $|(f.w_s)\star\psi_j|\approx w_s|f\star\psi_j|$ is inversely proportional to this size, but the complexity of the exhaustive search becomes prohibitive when the size is too large. In our implementation, we adjust it so that the Fourier support of $\psi_j$ (truncated to the support of $w_s$) is of size $12$.

\subsubsection{Usefulness of the error correction step}

The error correction step does not perfectly correct the errors, but greatly reduces the amplitude of large ones.

Figure \ref{fig:reestimation_example} shows an example of this phenomenon. It deals with the reconstruction of a difficult audio signal, representing a human voice saying ``I'm sorry''. Figure \ref{fig:reestimation_example_a} shows $f\star\psi_7^{low}$ after the multiscale reconstruction at scale $7$, but before the error correction step. The reconstruction presents large errors. Figure \ref{fig:reestimation_example_b} shows the value after the error correction step. It is still not perfect but much closer to the ground truth.

\begin{figure}
\centering
\subfloat[]{\includegraphics[width=0.4\textwidth]{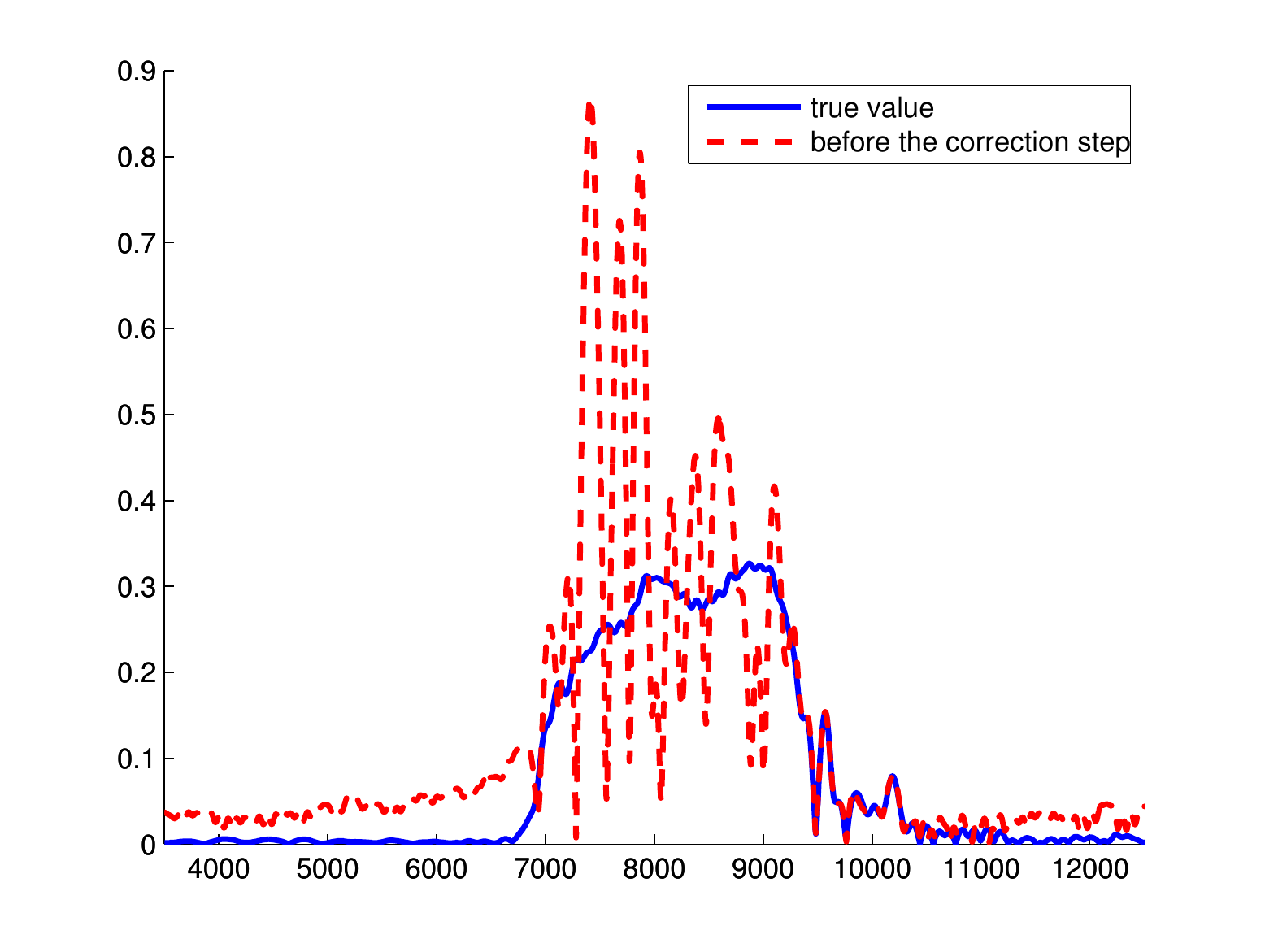}\label{fig:reestimation_example_a}}
\hfil
\subfloat[]{\includegraphics[width=0.4\textwidth]{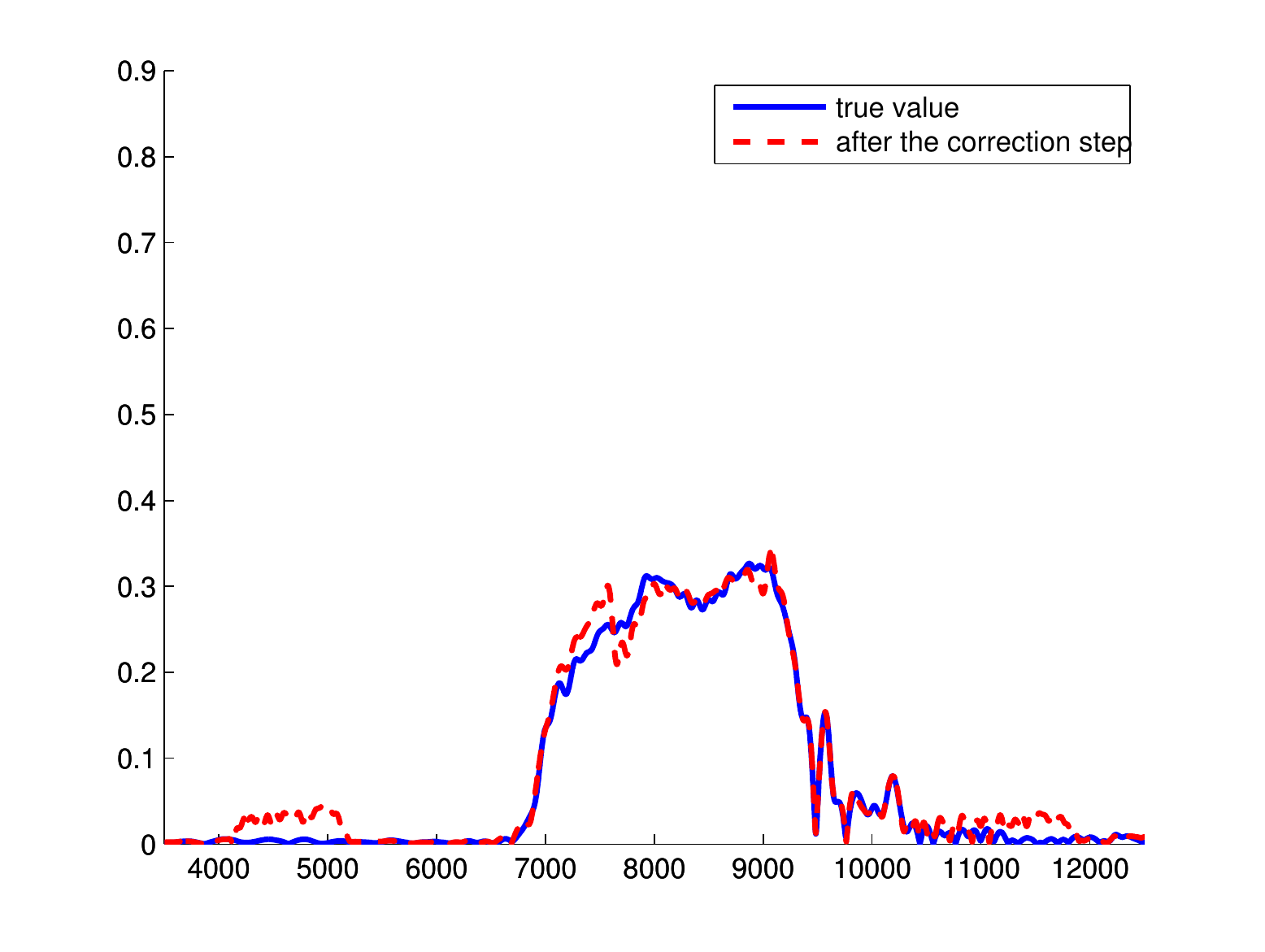}\label{fig:reestimation_example_b}}
\caption{For an audio signal, the reconstructed value of $f\star\psi^{low}_7$ at the scale $7$ of the multiscale algorithm, in modulus (dashed line); the solid line represents the ground true. (a) Before the error correction step (b) After the error correction step
\label{fig:reestimation_example}}
\end{figure}

\nl
So the error correction step must be used when large errors are susceptible to occur, and turned off otherwise: it makes the algorithm faster without reducing its precision.

Figure \ref{fig:reestimation} illustrates this affirmation by showing the mean reconstruction error for the same audio signal as previously. When $200$ iterations only are allowed at each local optimization step, there are large errors in the multiscale reconstruction; the error correction step significantly reduces the reconstruction error. When $2000$ iterations are allowed, all the large errors can be corrected during the local optimization steps and the error correction step is not useful.

\begin{figure}
\centering
\subfloat[]{\includegraphics[width=0.4\textwidth]{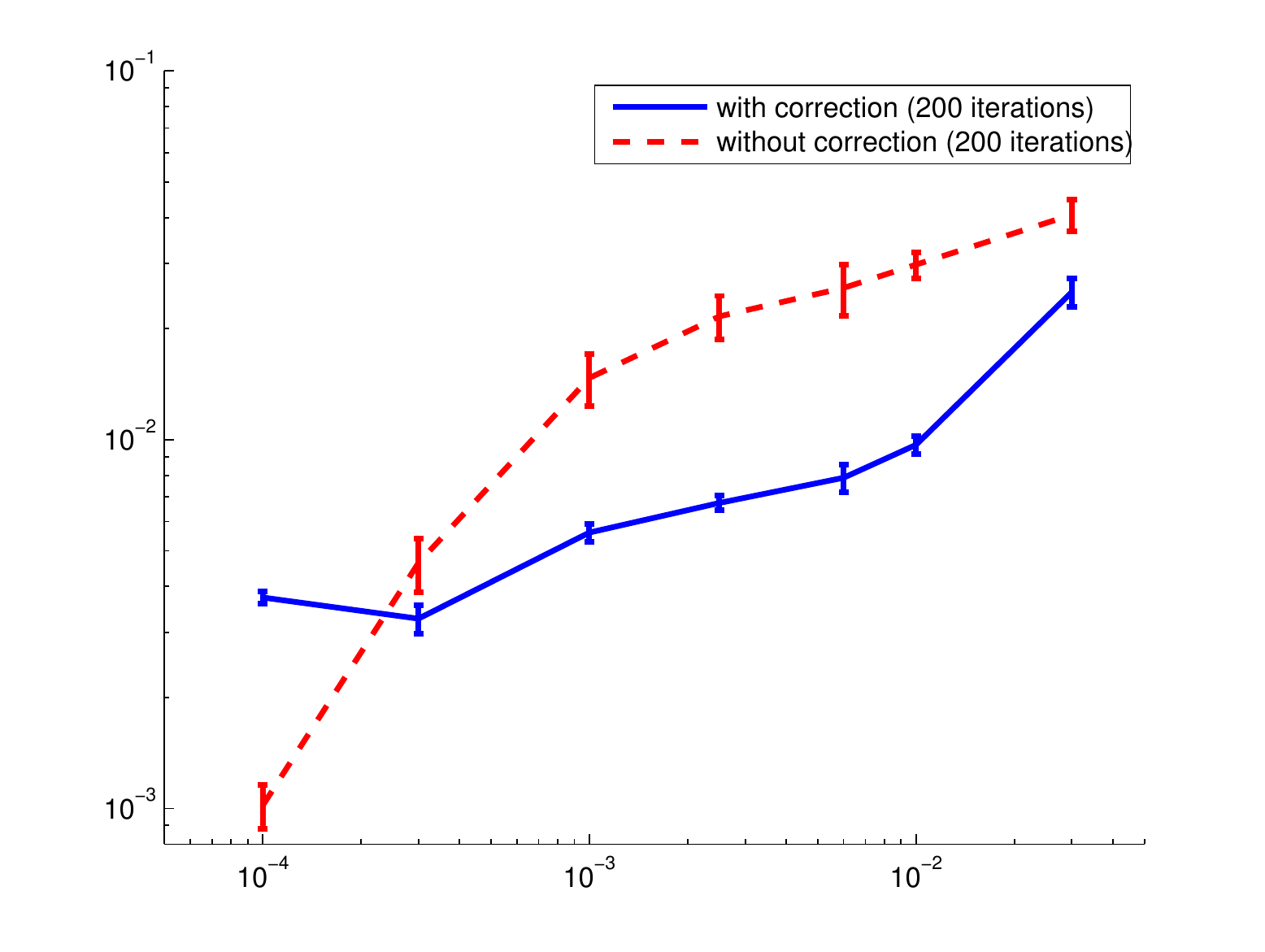}}
\hfil
\subfloat[]{\includegraphics[width=0.4\textwidth]{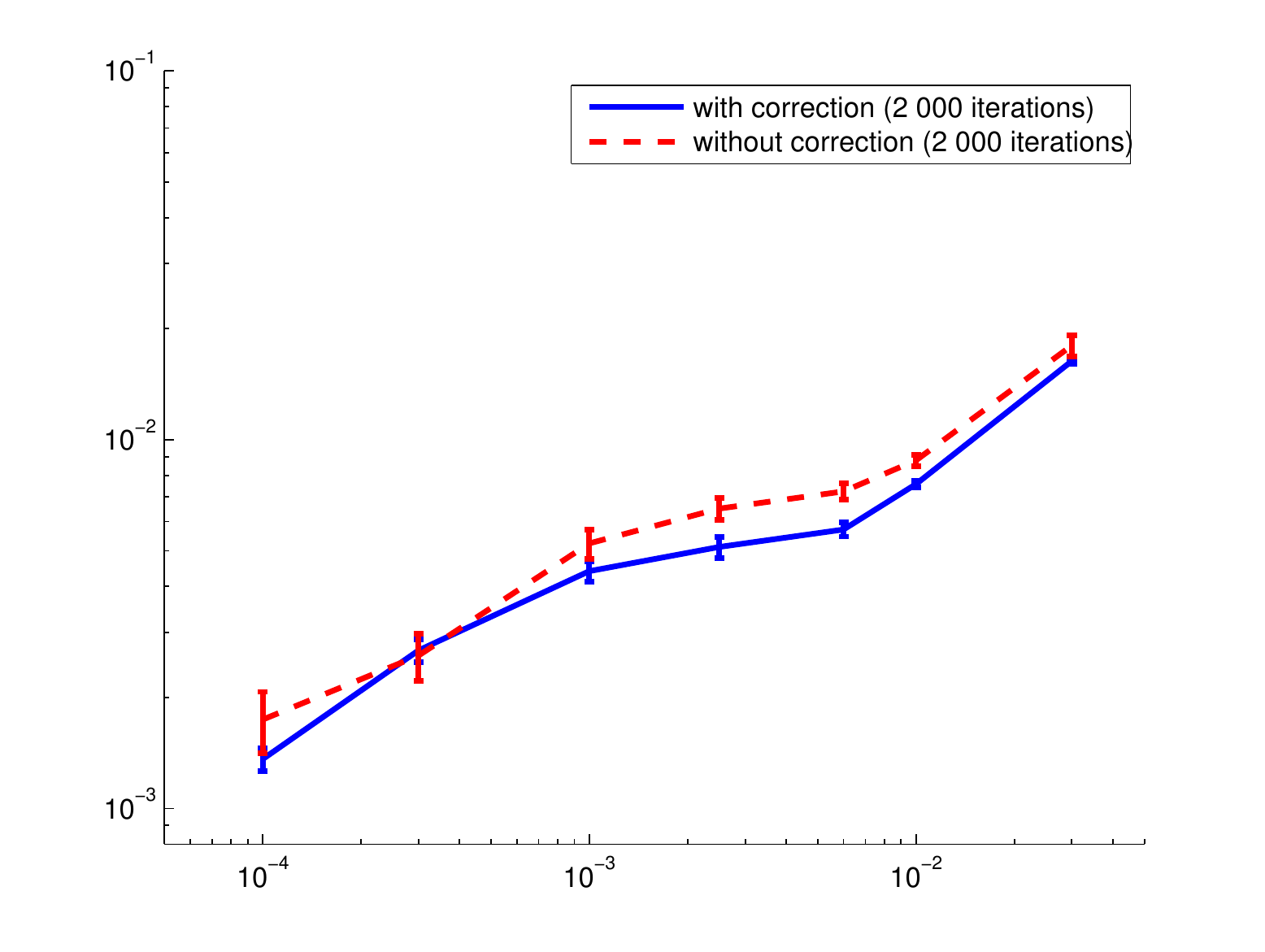}}
\caption{Mean reconstruction error \eqref{eq:reconstruction_error} as a function of the noise, for an audio signal representing a human voice. (a) Maximal number of iterations per local optimization step equal to $200$ (b) Maximal number equal to $2000$.
\label{fig:reestimation}}
\end{figure}

\section{Multiscale versus non-multiscale\label{s:multiscale}}

Our reconstruction algorithm has very good reconstruction performances, mainly because it uses the reformulation of the phase retrieval problem introduced in Section \ref{s:reformulation}. However, the quality of its results is also due to its multiscale structure. It is indeed known that, for the reconstruction of functions from their spectrogram or scalogram, multiscale algorithms perform better than non-multiscale ones \citep{bouvrie,bruna_phd}.

In this section, we propose two justifications for this phenomenon (paragraph \ref{ss:advantages}). We then introduce a multiscale version of the classical Gerchberg-Saxton algorithm, and numerically verify that it yields better reconstruction results than the usual non-multiscale version (paragraph \ref{ss:multiscale_GS}).

\subsection{Advantages of the multiscale reconstruction\label{ss:advantages}}

At least two factors can explain the superiority of multiscale methods, where the $f\star\psi_j$'s are reconstructed one by one, and not all at the same time.

First, they can partially remedy the possible ill-conditioning of the problem. In particular, if the $f\star\psi_j$'s have very different norms, then a non-multiscale algorithm will be more sensitive to the components with a high norm. It may neglect the information given by $|f\star\psi_j|$, for the values of $j$ such that this function has a small norm. With a multiscale algorithm where all the $|f\star\psi_j|$'s are successively considered, this happens less frequently.

Second, iterative algorithms, like Gerchberg-Saxton, are very sensitive to the choice of their starting point (hence the care given to their initialization in the literature \citep{netrapalli,candes_wirtinger}). If all the components are reconstructed at the same time and the starting point is randomly chosen, the algorithm almost never converges towards the correct solution: it gets stuck in a local minima. In a multiscale algorithm, the starting point at each scale can be chosen so as to be consistent with the values reconstructed at lower scales; it yields much better results.

\subsection{Multiscale Gerchberg-Saxton\label{ss:multiscale_GS}}

To justify the efficiency of the multiscale approach, we introduce a multiscale version of the classical Gerchberg-Saxton algorithm \citep{gerchberg} (by alternate projections) and compare its performances with the non-multiscale algorithm.
\nl

The multiscale algorithm reconstructs $f\star\psi_J$ by exhaustive search (paragraph \ref{ss:exhaustive_search}).

Then, for each $j$, once $f\star\psi_{J},...,f\star\psi_{j+1}$ are reconstructed, an initial guess for $f\star\psi_j$ is computed by deconvolution. The frequencies of $f\star\psi_j$ for which the deconvolution is too unstable are set to zero. The regular Gerchberg-Saxton algorithm is then simultaneously applied to $f\star\psi_J,...,f\star\psi_j$.

\nl
We test this algorithm on realizations of Gaussian random processes (see Section \ref{ss:performances} for details), of various lengths. On Figure \ref{fig:advantages}, we plot the mean reconstruction error obtained with the regular Gerchberg-Saxton algorithm and the error obtained with the multiscale version (see Paragraph \ref{ss:setting} for the definition of the reconstruction error).

None of the algorithms is able to perfectly reconstruct the signals, in particular when their size increases. However, the multiscale algorithm clearly yields better results, with a mean error approximately twice smaller.
\begin{figure}
\centering
\includegraphics[width=0.4\textwidth]{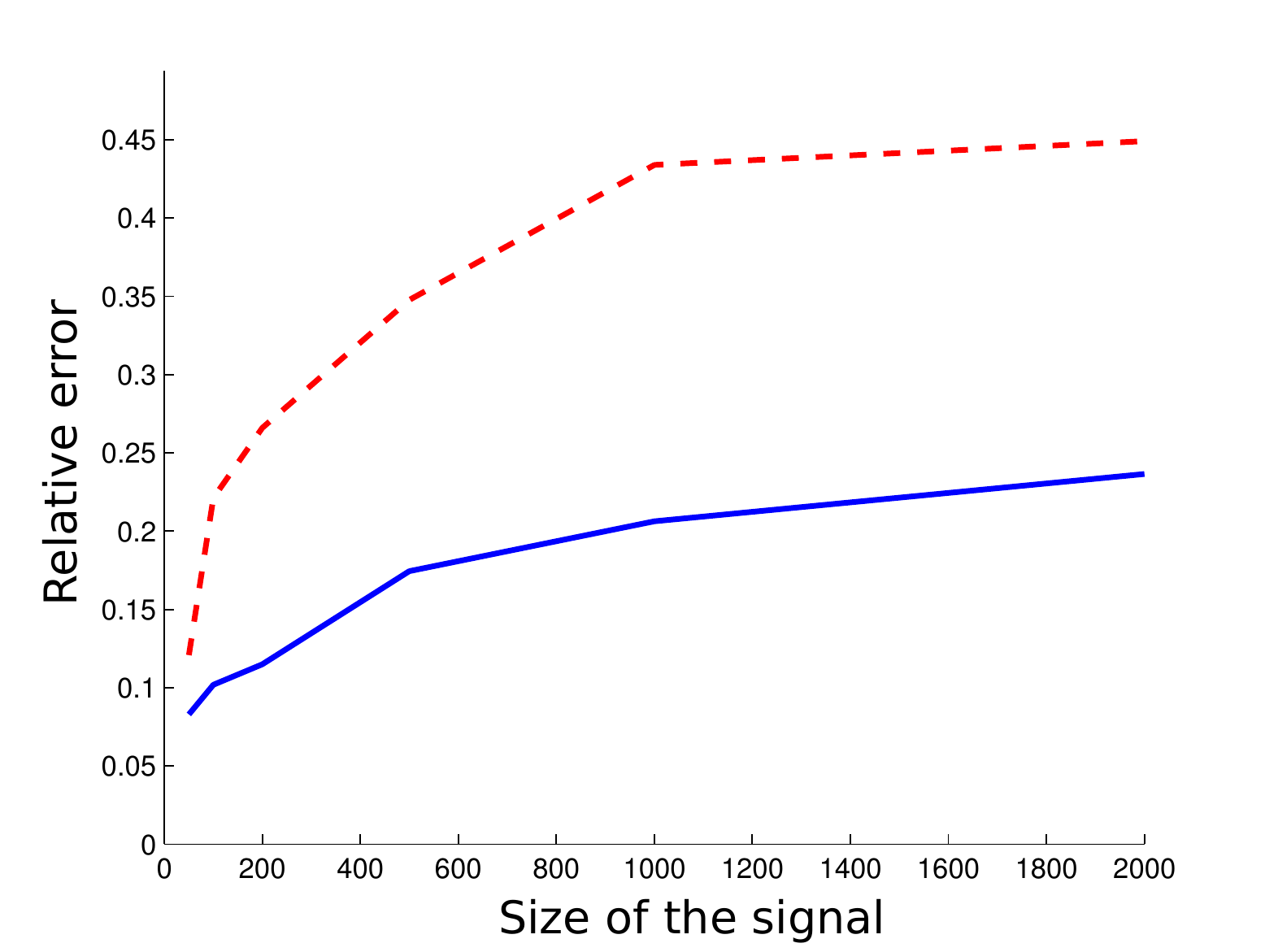}
\caption{Mean reconstruction error, as a function of the size of the signal; the solid blue line corresponds to the multiscale algorithm and the dashed red one to the non-multiscale one.\label{fig:advantages}}
\end{figure}

\section{Numerical results\label{s:numerical_results}}

In this section, we describe the behavior of our algorithm. We compare it with Gerchberg-Saxton and with \emph{PhaseLift}. We show that it is much more precise than Gerchberg-Saxton. It is comparable with \emph{PhaseLift} in terms of precision, but significantly faster, so it allows to reconstruct larger signals.

\nl
The performances strongly depend on the type of signals we consider. The main source of difficulty for our algorithm is the presence of small values in the wavelet transform, especially in the low frequencies.

Indeed, the reconstruction of $f\star\psi_j^{high}$ by Equation \eqref{eq:rec_high} involves a division by $f\star\psi_j^{low}$. When $f\star\psi_j^{low}$ has small values, this operation is unstable and induces errors.

As we will see in Section \ref{ss:stability}, the signals whose wavelet transform has many small values are also the signals for which the phase retrieval problem is the least stable (in the sense that two functions can have wavelet transforms almost equal in modulus without being close in $l^2$-norm). This suggests that this class of functions is intrinsically the most difficult to reconstruct; it is not an artifact of our algorithm.

\nl

We describe our experimental setting in Paragraph \ref{ss:setting}. In Paragraph \ref{ss:performances}, we give detailed numerical results for various types of signals. In Paragraph \ref{ss:stability}, we use our algorithm to investigate the stability to noise of the underlying phase retrieval problem. Finally, in Paragraph \ref{ss:influence}, we study the influence of various parameters on the quality of the reconstruction.

\subsection{Experimental setting\label{ss:setting}}

At each reconstruction trial, we choose a signal $f$ and compute its wavelet transform $\{|f\star\psi_j|\}_{0\leq j\leq J}$. We corrupt it with a random noise $n_j$:
\begin{equation}\label{eq:add_noise}
h_j = |f\star\psi_j| + n_j.
\end{equation}
We measure the amplitude of the noise in $l^2$-norm, relatively to the $l^2$-norm of the wavelet transform:
\begin{equation}\label{eq:amount_of_noise}
\mbox{amount of noise}= \frac{\sqrt{\underset{j}{\sum}||n_j||_2^2}}{\sqrt{\underset{j}{\sum}||f\star\psi_j||_2^2}}.
\end{equation}
In all our experiments, the $n_j$ are realizations of Gaussian white noises.

We run the algorithm on the noisy wavelet transform $\{h_j\}_{0\leq j\leq J}$. It returns a reconstructed signal $f_{rec}$. We quantify the reconstruction error by the difference, in relative $l^2$-norm, between the modulus of the wavelet transform of the original signal $f$ and the modulus of the wavelet transform of the reconstructed signal $f_{rec}$:
\begin{equation}\label{eq:reconstruction_error}
\mbox{reconstruction error}=\frac{\sqrt{\underset{j}{\sum}||\,|f\star\psi_j|-|f_{rec}\star\psi_j|\,||_2^2} }{\sqrt{\underset{j}{\sum}||f\star\psi_j||_2^2} }.
\end{equation}
Alternatively, we could measure the difference between $f$ and $f_{rec}$ (up to a global phase):
\begin{equation}\label{eq:error_on_the_signal}
\mbox{error on the signal}=\inf_{\phi\in\R} \frac{||e^{i\phi}f-f_{rec}||_2}{||f||_2}.
\end{equation}
But we know that the reconstruction of a function from the modulus of its wavelet transform is not stable to noise \citep{waldspurger}. So we do not hope the difference between $f$ and $f_{rec}$ to be small. We just want the algorithm to reconstruct a signal $f_{rec}$ whose wavelet transform is close to the wavelet transform of $f$, in modulus. Thus, the reconstruction error \eqref{eq:reconstruction_error} is more relevant to measure the performances of the algorithm.

\nl
In all the experiments, unless otherwise specified, we use dyadic Morlet wavelets, to which we subtract Gaussian functions of small amplitude so that they have zero mean:
\begin{equation*}
\hat\psi(\omega)=\exp(-p(\omega-1)^2)-\beta\exp(-p\omega^2)
\end{equation*}
where $\beta>0$ is chosen so that $\hat\psi(0)=0$ and the parameter $p$ is arbitrary (it controls the frequency bandwidth of the wavelets). For $N=256$, our family of wavelets contains eight elements, which are plotted on Figure \ref{fig:wav_family_1}. The performances of the algorithm strongly depend on the choice of the wavelet family; this is discussed in Paragraph \ref{sss:choice_family}.

The maximal number of iterations per local optimization step is set to $10000$ (with an additional stopping criterion, so that the $10000$-th iteration is not always reached). We study the influence of this parameter in Paragraph \ref{sss:number_iterations}.

The error correction step described in Paragraph \ref{ss:error_correction} is always turned on.

\nl
Gerchberg-Saxton is applied in a multiscale fashion, as described in Paragraph \ref{ss:multiscale_GS}, which yields better results than the regular implementation.

We use \emph{PhaseLift} \citep{candes} with ten steps of reweighting, followed by $2000$ iterations of the Gerchberg-Saxton algorithm. In our experiments with \emph{PhaseLift}, we only consider signals of size $N=256$. Handling larger signals is difficult with a straightforward Matlab implementation.

\subsection{Results\label{ss:performances}}

We describe four classes of signals, whose wavelet transforms have more or less small values. For each class, we plot the reconstruction error of our algorithm, Gerchberg-Saxton and \emph{PhaseLift} as a function of the noise error.

\subsubsection{Realizations of Gaussian random processes}

\begin{figure}
\centering
\begin{minipage}[c]{0.45\textwidth}
\centering
\includegraphics[width=0.48\textwidth]{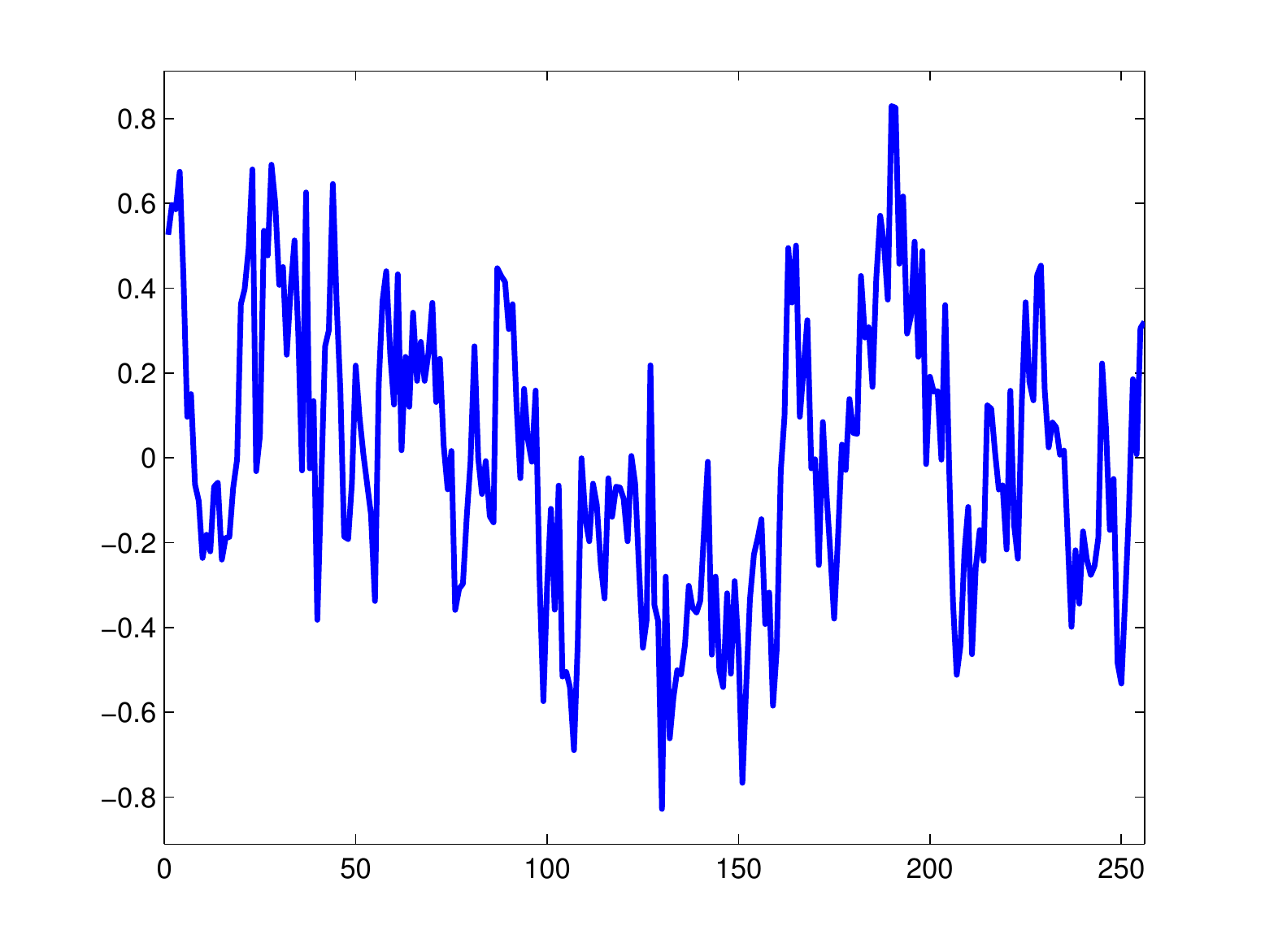}
\includegraphics[width=0.48\textwidth]{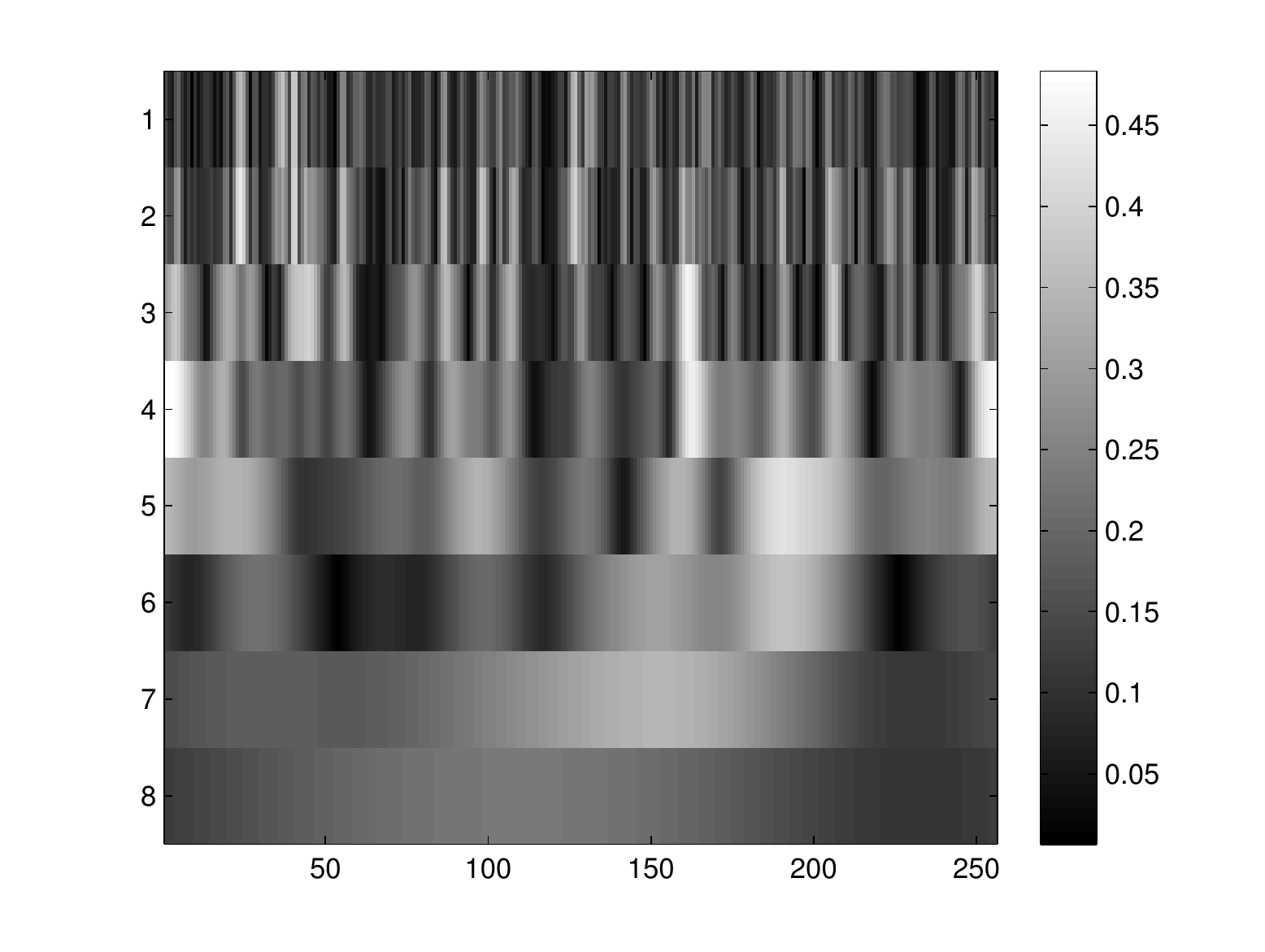}
\end{minipage}
\caption{Realization of a Gaussian process (left) and modulus of its wavelet transform (right)\label{fig:signals_gauss}}
\end{figure}
\begin{figure}
\centering
\includegraphics[width=0.4\textwidth]{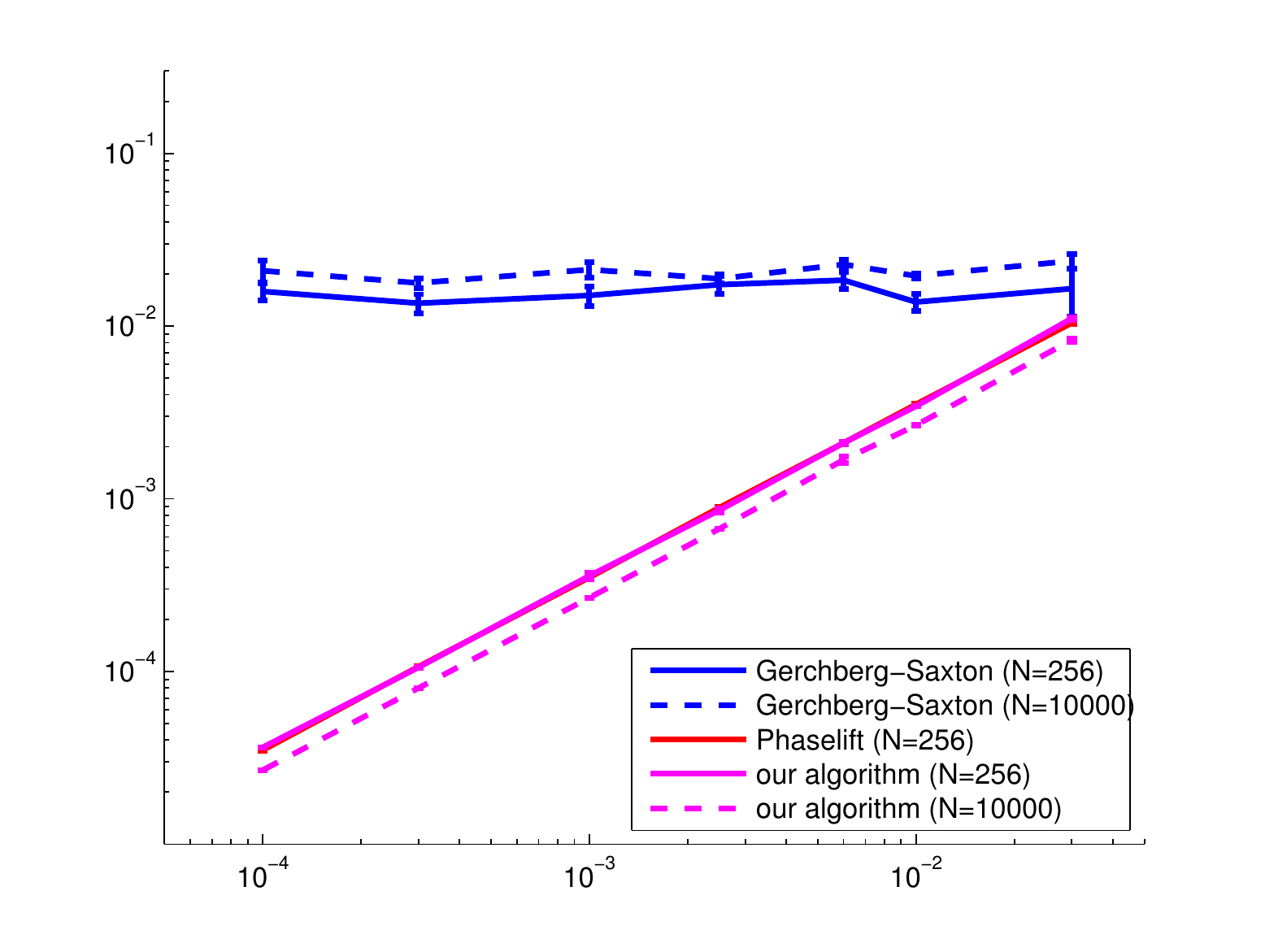}
\caption{Mean reconstruction error as a function of the noise, for Gaussian signals of size $N=256$ or $10000$\label{fig:comp_gauss}}
\end{figure}

We first consider realizations of Gaussian random processes. A signal $f$ in this class is defined by
\begin{alignat*}{3}
\hat f[k]&=\frac{X_k}{\sqrt{k+1}}&&\quad\mbox{if }k\in\{1,...,N/2\};\\
&=0&&\quad\mbox{if not}.
\end{alignat*}
where $X_1,...,X_{N/2}$ are independent realizations of complex Gaussian centered variables. The role of the $\sqrt{k+1}$ is to ensure that all components of the wavelet transform approximately have the same $l^2$-norm (in expectation). An example is displayed on Figure \ref{fig:signals_gauss}, along with the modulus of its wavelet transform.

The wavelet transforms of these signals have few small values, disposed in a seemingly random pattern. This is the most favorable class for our algorithm.

The reconstruction results are shown in Figure \ref{fig:comp_gauss}. Even for large signals ($N=10000$), the mean reconstruction error is proportional to the input noise (generally $2$ or $3$ times smaller); this is the best possible result. The performances of \emph{PhaseLift} are exactly the same, but Gerchberg-Saxton often fails.

\subsubsection{Lines from images}

\begin{figure}
\centering
\begin{minipage}[c]{0.45\textwidth}
\centering
\includegraphics[width=0.48\textwidth]{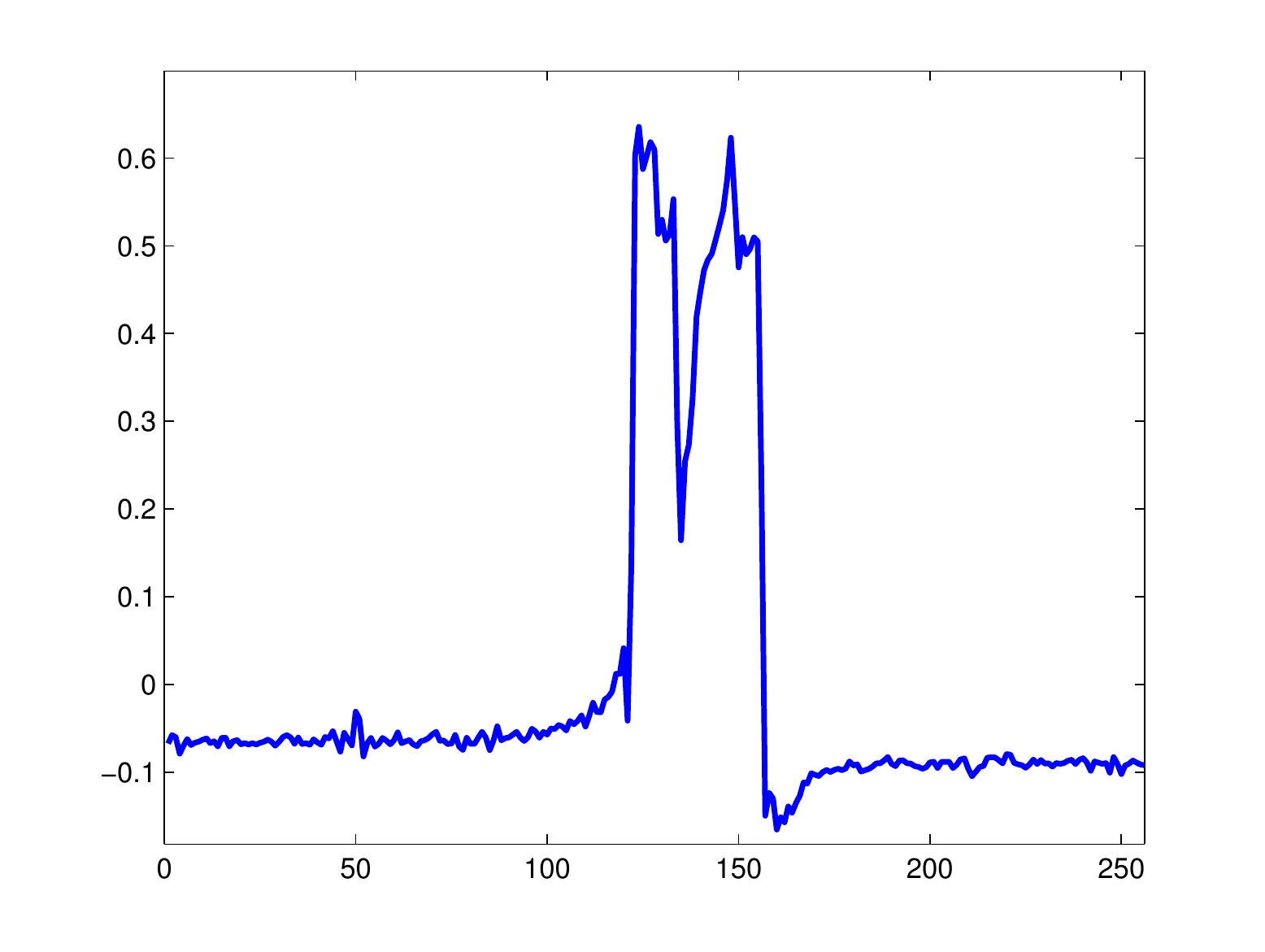}
\includegraphics[width=0.48\textwidth]{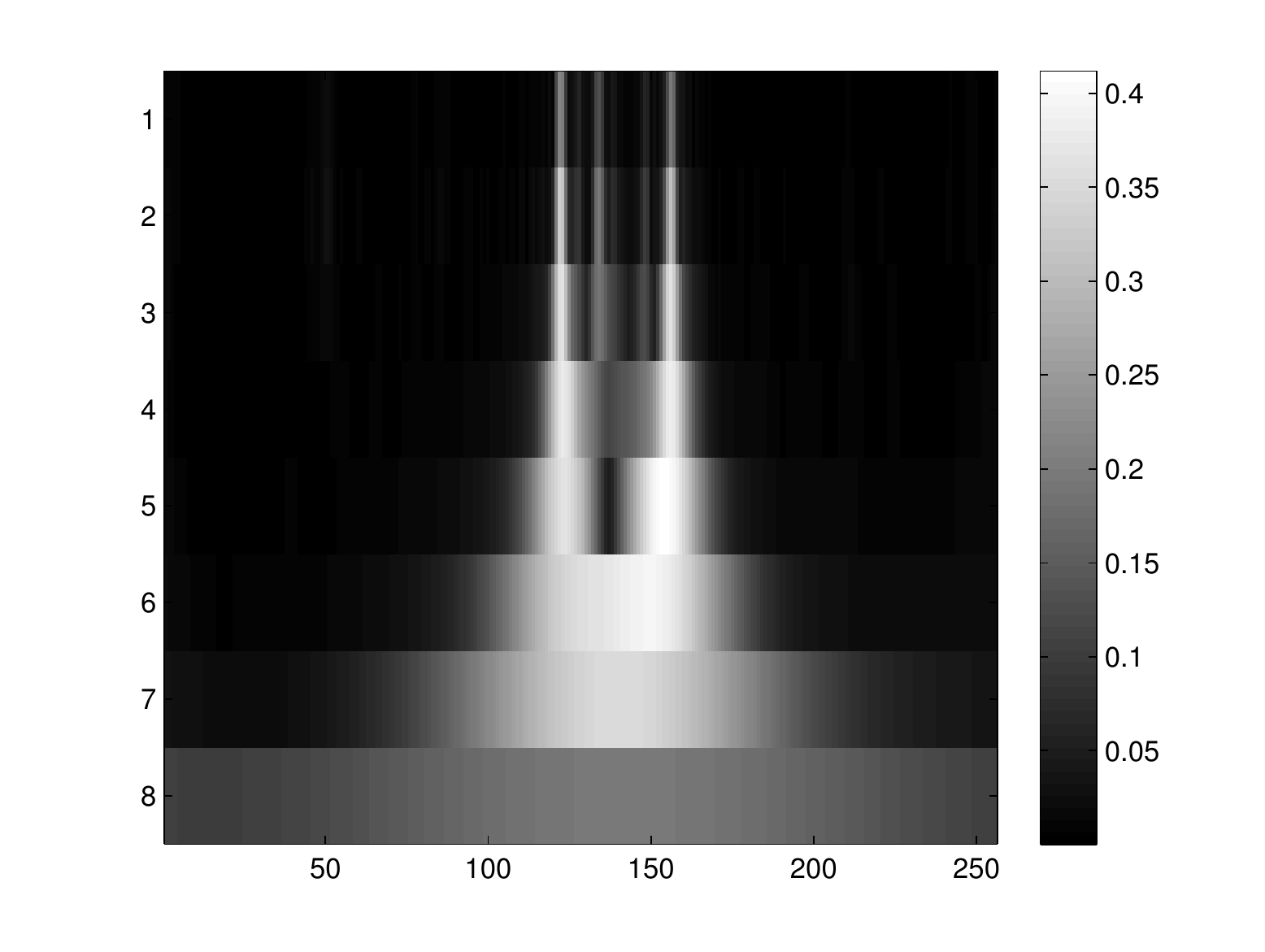}
\end{minipage}
\caption{Line from an image (left) and modulus of its wavelet transform (right)\label{fig:signals_piece}}
\end{figure}
\begin{figure}
\centering
\includegraphics[width=0.4\textwidth]{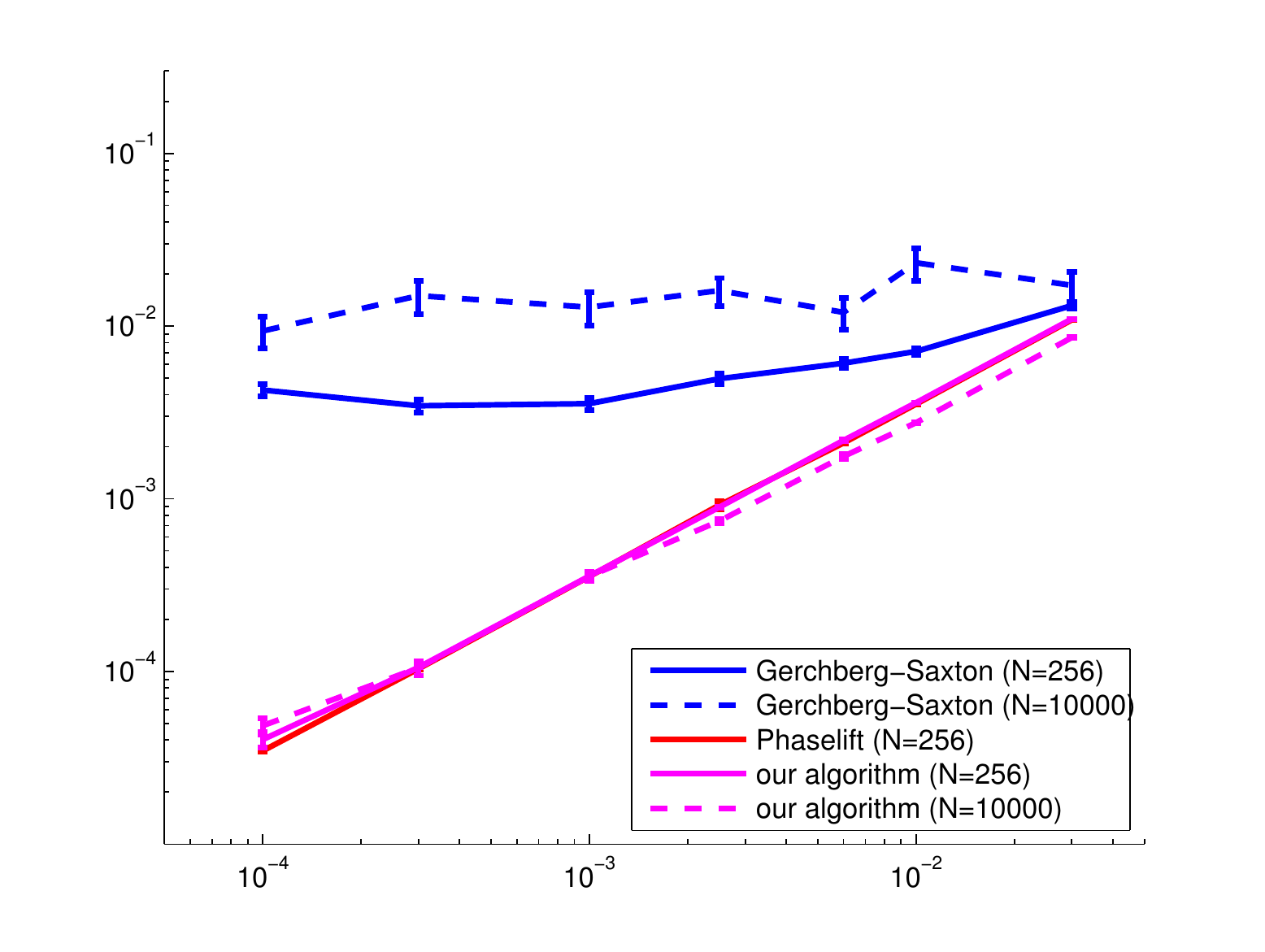}
\caption{Mean reconstruction error as a function of the noise, for lines extracted from images, of size $N=256$ or $10000$\label{fig:comp_piece}}
\end{figure}

The second class consists in lines randomly extracted from photographs. These signals have oscillating parts (corresponding to the texture zones of the initial image) and smooth parts, with large discontinuities in between. Their wavelet transforms generally contain a lot a small values, but, as can be seen in Figure \ref{fig:signals_piece}, the distribution of these small values is particular. They are more numerous at high frequencies and the non-small values tend to concentrate on vertical lines of the time-frequency plane.

This distribution is favorable to our algorithm: small values in the wavelet transform are mostly a problem when they are in the low frequencies and prevent the correct initialization of the reconstruction at medium or high frequencies. Small values at high frequencies are not a problem.

Indeed, as in the case of Gaussian signals, the reconstruction error is proportional to the input noise (figure \ref{fig:comp_piece}). This is also the case for \emph{PhaseLift} but not for Gerchberg-Saxton.

\subsubsection{Sums of a few sinusoids}

\begin{figure}
\centering
\begin{minipage}[c]{0.45\textwidth}
\centering
\includegraphics[width=0.48\textwidth]{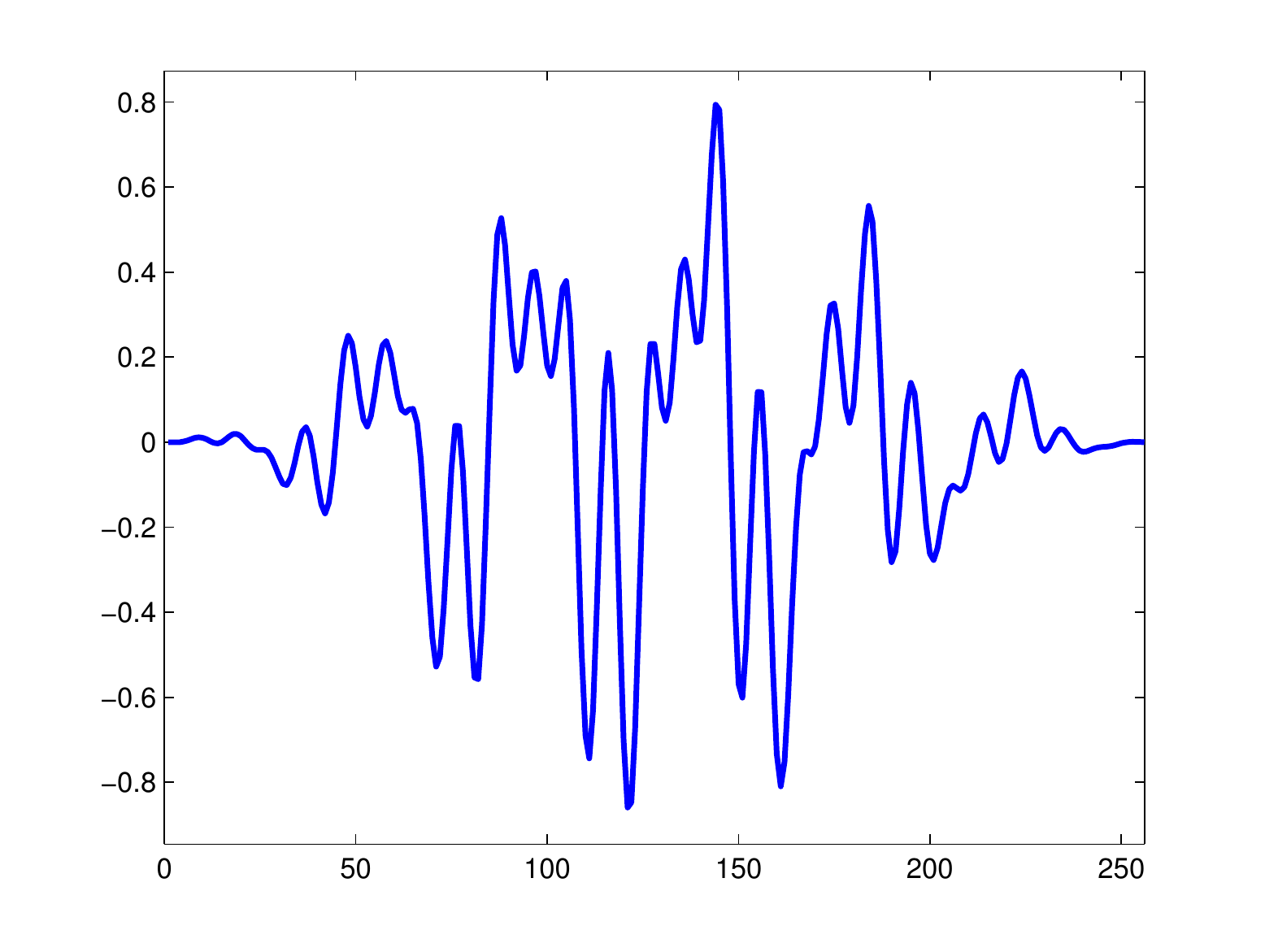}
\includegraphics[width=0.48\textwidth]{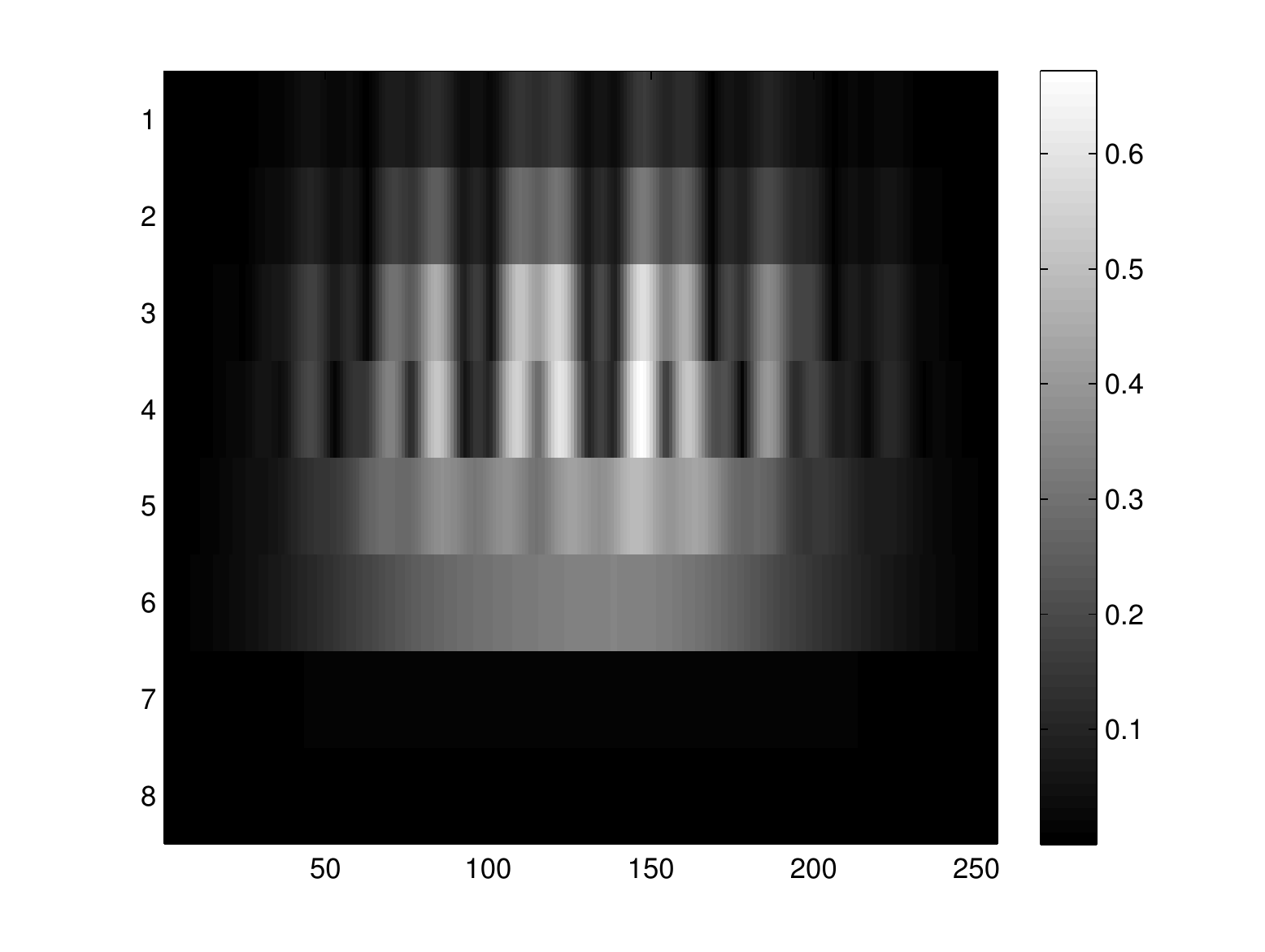}
\end{minipage}
\caption{Random sum of sinusoids, multiplied by a window function (left) and modulus of its wavelet transform (right)\label{fig:signals_comp_sin}}
\end{figure}
\begin{figure}
\centering
\includegraphics[width=0.4\textwidth]{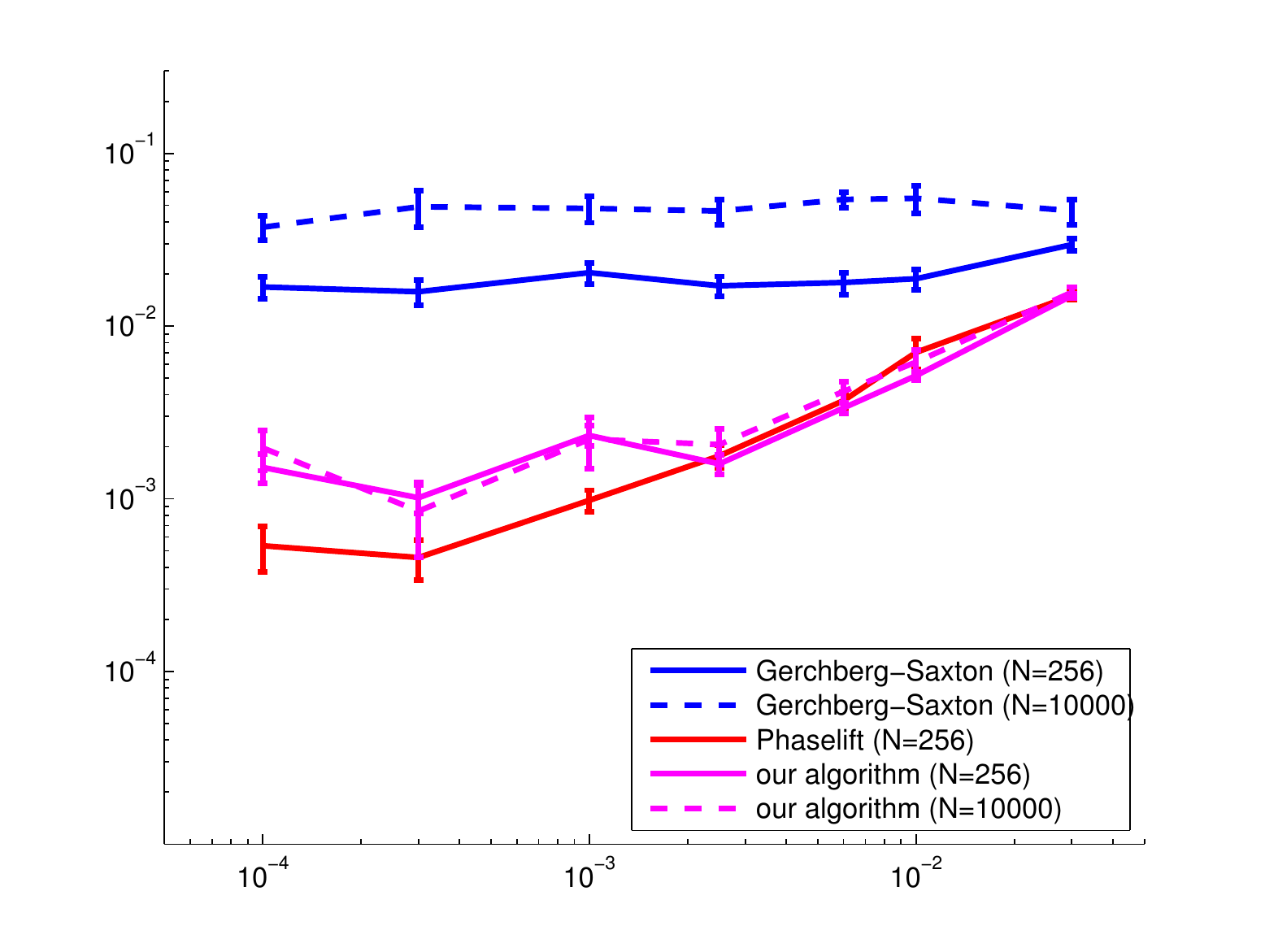}
\caption{Mean reconstruction error as a function of the noise, for random sums of sinusoids multiplied by a window function, of size $N=256$ or $10000$\label{fig:comp_comp_sin}}
\end{figure}

The next class of signals contains sums of a few sinusoids, multiplied by a window function $w$ to avoid boundary effects. Formally, a signal in this class is of the form
\begin{equation*}
f[n] = \left[\underset{k=1}{\overset{N/2}{\sum}}\alpha_k\exp\left(i\frac{2\pi kn}{N}\right)\right] \times w[n].
\end{equation*}
where the $\alpha_k$ are zero with high probability and realizations of complex Gaussian centered variables with small probability.

The wavelet transforms of these signals often have components of very small amplitude, which may be located at any frequential scale (figure \ref{fig:signals_comp_sin}). This can prevent the reconstruction.

\nl
The results are on Figure \ref{fig:comp_comp_sin}. Our algorithm performs much better than Gerchberg-Saxton but the results are not as good as for the two previous classes of signals.

In most reconstruction trials, the signal is correctly reconstructed, up to an error proportional to the noise. But, with a small probability, the reconstruction fails. The same phenomenon occurs for \emph{PhaseLift}.

The probability of failure seems a bit higher for \emph{PhaseLift} than for our algorithm. For example, when the signals are of size $256$ and the noise has a relative norm of $0.01\%$, the reconstruction error is larger than the noise error $20\%$ of the time for \emph{PhaseLift} and only $10\%$ of the time for our algorithm. However, \emph{PhaseLift} has a smaller mean reconstruction error because, in these failure cases, the result it returns, although not perfect, is more often close to the truth: the mean reconstruction error in the failure cases is $0.2\%$ for \emph{PhaseLift} versus $1.7\%$ for our algorithm.

\subsubsection{Audio signals}

\begin{figure*}
\centering
\subfloat[]{\includegraphics[width=0.48\textwidth]{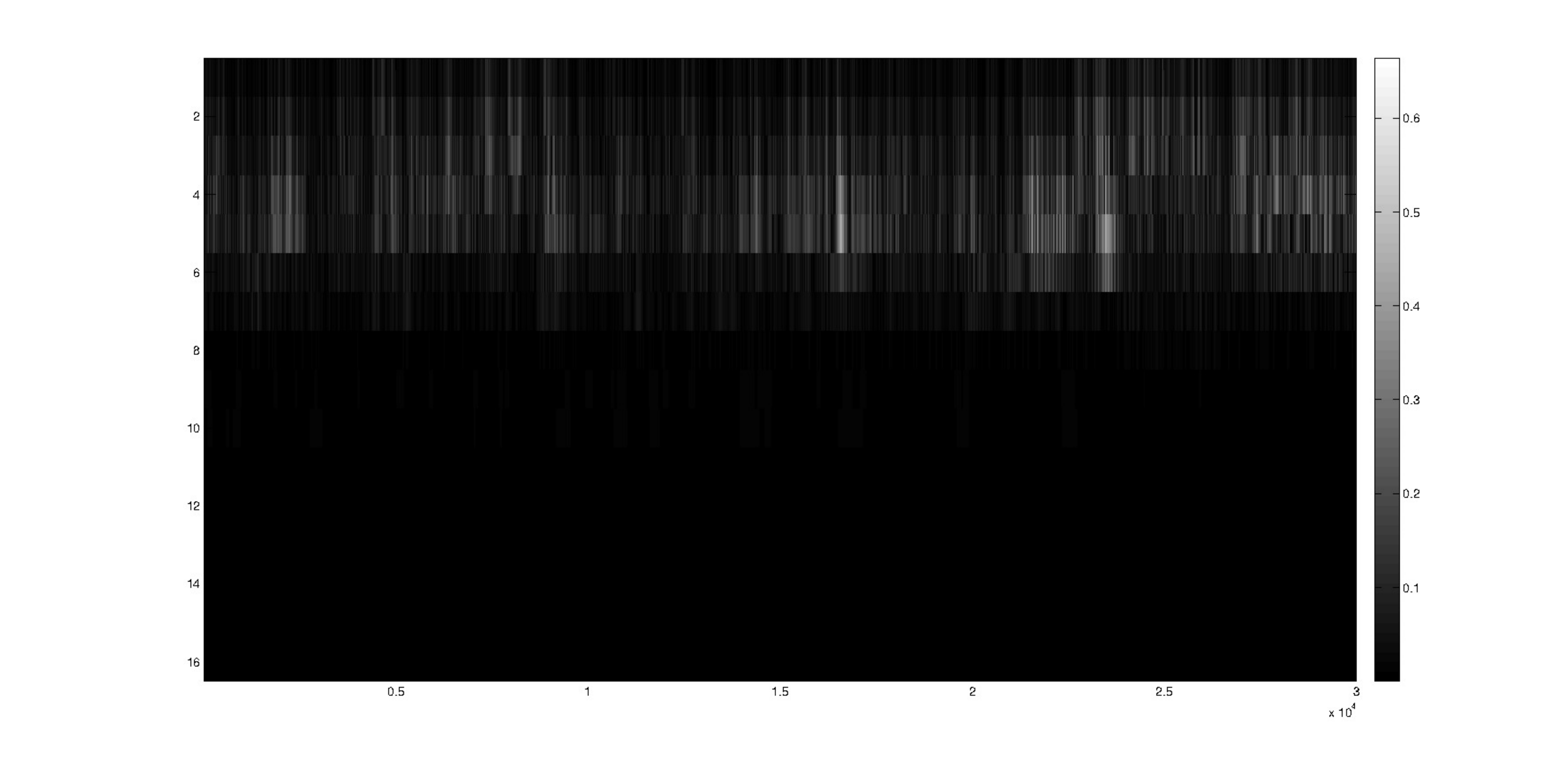}
\label{fig:signals_rimsky}}
\hfil
\subfloat[]{\includegraphics[width=0.48\textwidth]{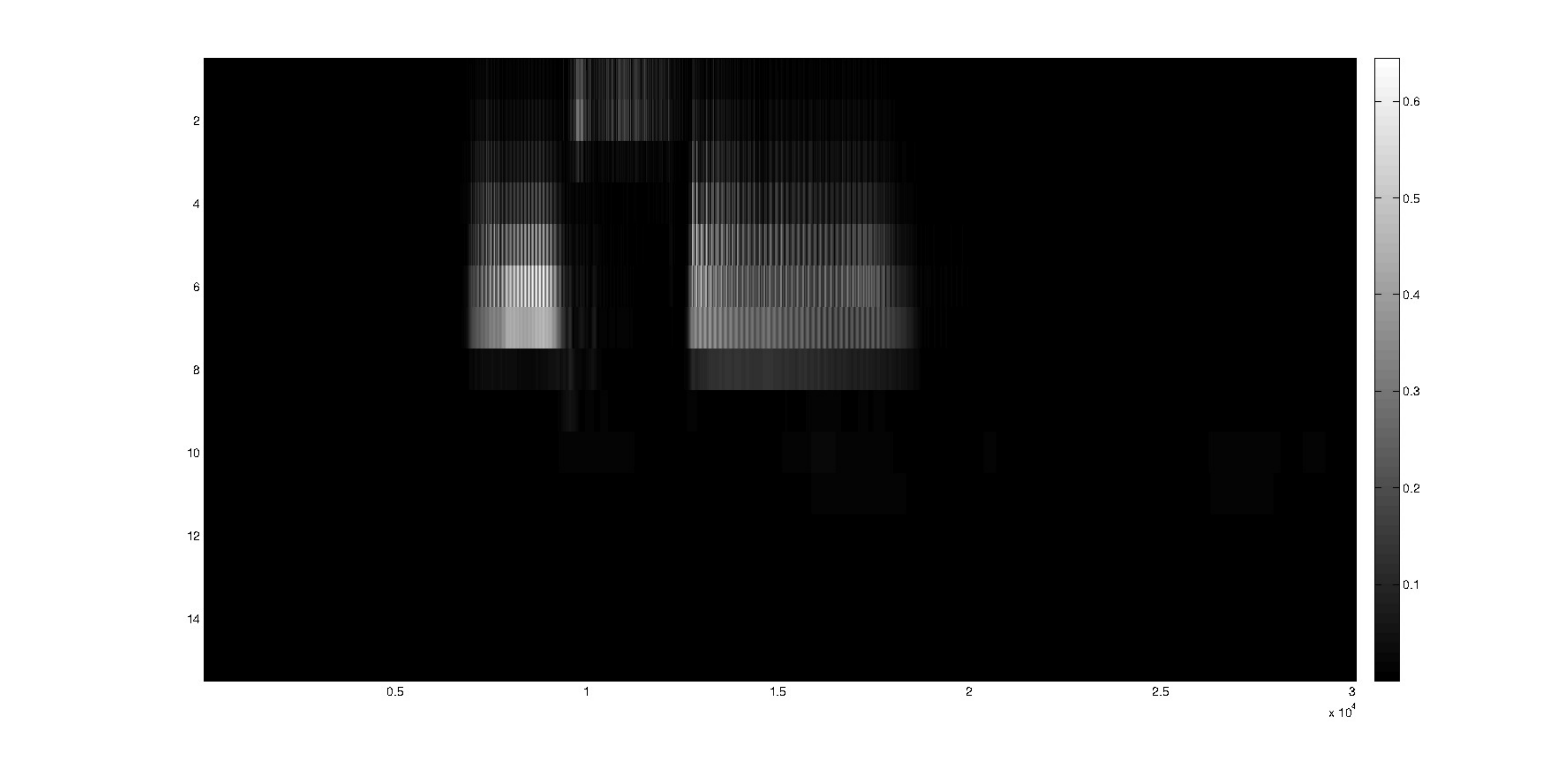}
\label{fig:signals_sorry}}
\caption{Wavelet transforms of the audio signals (a) Rimsky-Korsakov (b) ``I'm sorry''}
\end{figure*}

Finally, we test our algorithm on real audio signals. These signals are difficult to reconstruct because they do not contain very low frequencies (as the human ear cannot hear them, these frequencies are not included in the recordings), so the first components of their wavelet transforms are very small.

The reconstruction results may vary from one audio signal to the other. We focus here on two representative examples.

The first signal is an extract of five seconds of a musical piece played by an orchestra (the \textit{Flight of the Bumblebee}, by Rimsky-Korsakov). Figure \ref{fig:signals_rimsky} shows the modulus of its wavelet transform. It has $16$ components and $9$ of them (the ones with lower characteristic frequencies) seem negligible, compared to the other ones. However, its non-negligible components have a moderate number of small values.

The second signal is a recording of a human voice saying ``I'm sorry'' (figure \ref{fig:signals_sorry}). The low-frequency components of its wavelet transform are also negligible, but even the high-frequency components tend to have small values, which makes the reconstruction even more difficult.

\begin{figure}
\centering
\includegraphics[width=0.4\textwidth]{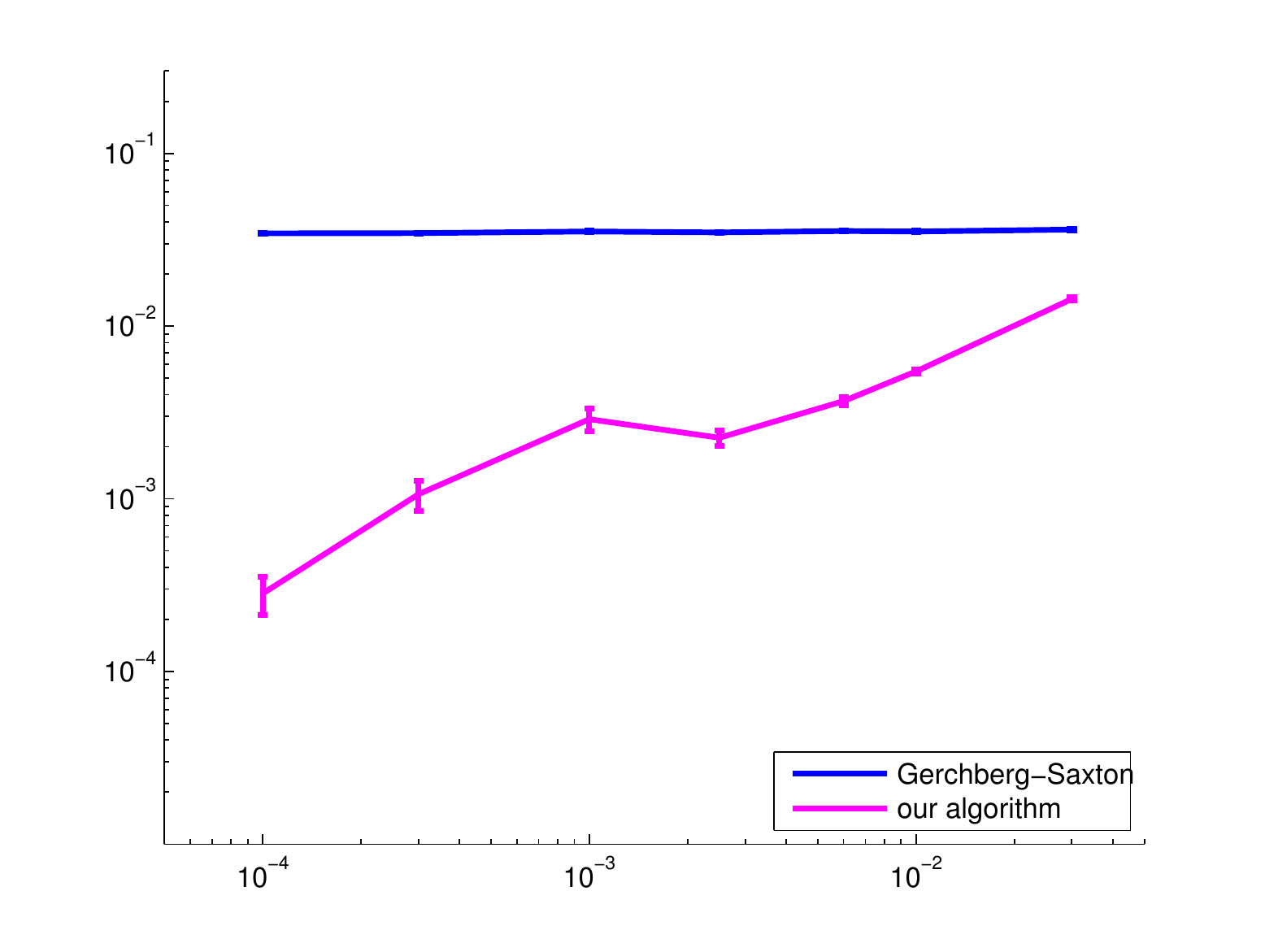}
\caption{mean reconstruction error as a function of the noise, for the audio signal ``Rimsky-Korsakov''\label{fig:comp_rimsky}}
\end{figure}

\begin{figure}
\centering
\includegraphics[width=0.4\textwidth]{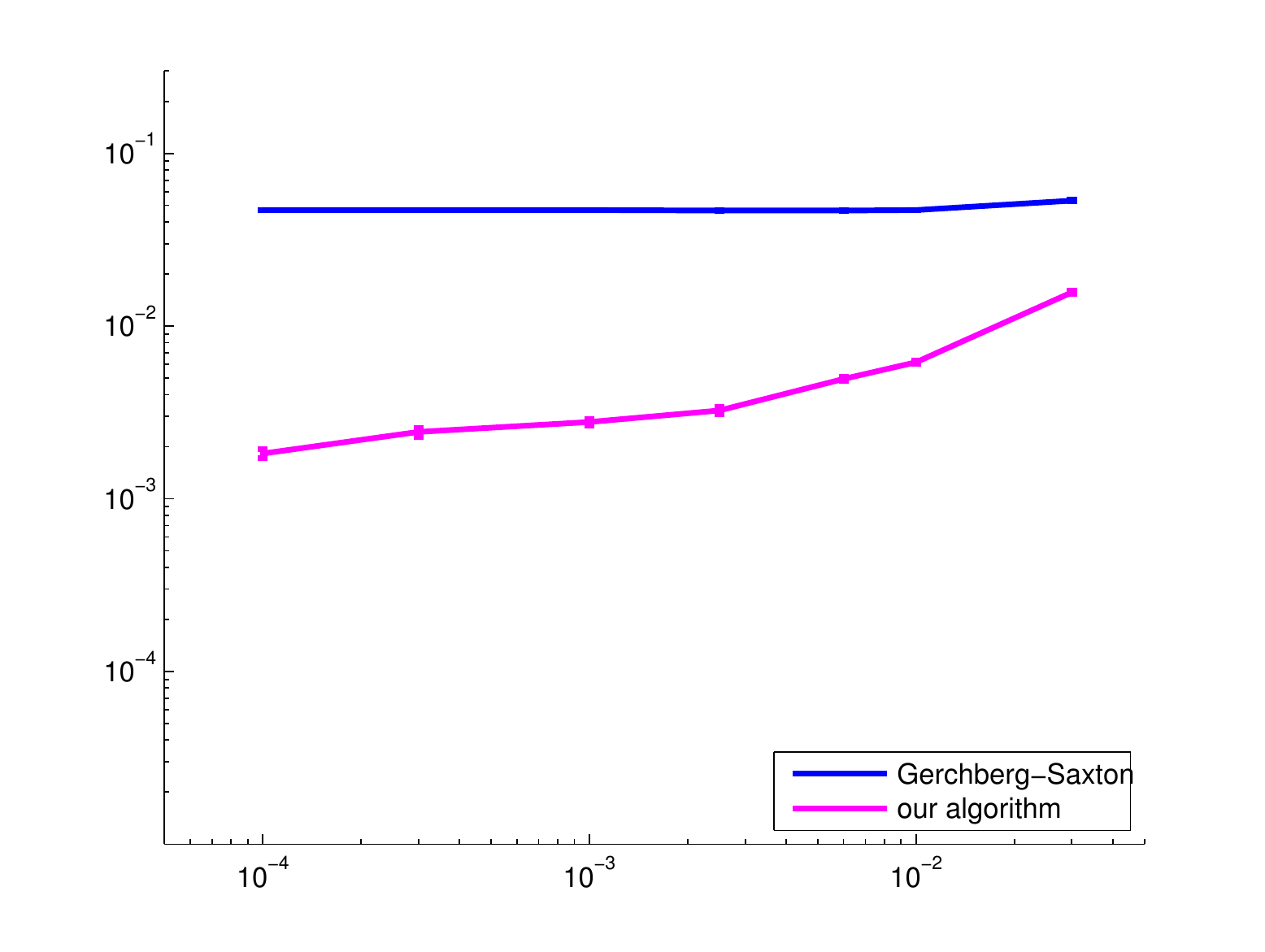}
\caption{mean reconstruction error as a function of the noise, for the audio signal ``I'm sorry''\label{fig:comp_sorry}}
\end{figure}

\nl
The results are presented in Figures \ref{fig:comp_rimsky} and \ref{fig:comp_sorry}. For relatively high levels of noise ($0.5\%$ or higher), the results, in the sense of the $l^2$-norm, are satisfying: the reconstruction error is smaller or equal to the amount of noise.

In the high precision regime (that is, for $0.1\%$ of noise or less), the lack of low frequencies does not allow a perfect reconstruction. Nevertheless, the results are still good: the reconstruction error is of the order of $0.1\%$ or $0.2\%$ when the noise error is below $0.1\%$. More iterations in the optimization steps can further reduce this error. By comparison, the reconstruction error with Gerchberg-Saxton is always several percent, even when the noise is small.


\subsection{Stability of the reconstruction\label{ss:stability}}

In this section, we use our reconstruction algorithm to investigate the stability of the reconstruction. From \citep{waldspurger}, we know that the reconstruction operator is not uniformly continuous: the reconstruction error \eqref{eq:reconstruction_error} can be small (the modulus of the wavelet transform is almost exactly reconstructed), even if the error on the signal \eqref{eq:error_on_the_signal} is not small (the difference between the initial signal and its reconstruction is large).

We show that this phenomenon can occur for all classes of signals, but is all the more frequent when the wavelet transform has a lot of small values, especially in the low frequency components.

We also experimentally show that, when this phenomenon happens, the original and reconstructed signals have their wavelet transforms $\{f\star\psi_j(t)\}_{j\in\Z,t\in\R}$ equal up to multiplication by a phase $\{e^{i\phi_j(t)}\}_{j\in\Z,t\in\R}$, which varies slowly in both $j$ and $t$, except maybe at the points where $f\star\psi_j(t)$ is close to zero. This has been conjectured in \citep{waldspurger}. It is a form of ``local stability'': the signal and its reconstruction are close, up to a global phase, in a neighborhood of each point of the time-frequency plane.

\nl
We perform a large number of reconstruction trials, with various reconstruction parameters. This gives us a large number of pairs $(f,f_{rec})$, such that $\forall j,t,|f\star\psi_j(t)|\approx|f_{rec}\star\psi_j(t)|$. For each one of these pairs, we compute
\begin{align*}
\mbox{error on the modulus}&=\frac{\sqrt{\underset{j}{\sum}||\,|f\star\psi_j|-|f_{rec}\star\psi_j|\,||_2^2} }{\sqrt{\underset{j}{\sum}||f\star\psi_j||_2^2} };
\tag{\ref{eq:reconstruction_error}}\\
\mbox{error on the signal}&=\frac{||f-f_{rec}||_2}{||f||_2}.
\tag{\ref{eq:error_on_the_signal}}
\end{align*}

The results are plotted on Figure \ref{fig:stability}, where each point corresponds to one reconstruction trial. The x-coordinate represents the error on the modulus and the y-coordinate the error on the signal.

\nl
We always have:
\begin{equation*}
\mbox{error on the modulus}\leq C\times(\mbox{error on the function}).
\end{equation*}
with $C$ a constant of the order of $1$. This is not surprising because the modulus of the wavelet transform is a Lipschitz operator, with a constant close to $1$.
\nl

As expected, the converse inequality is not true: the error on the function can be significantly larger than the error on the modulus. For each class, an important number of reconstruction trials yield errors such that:
\begin{equation*}
\mbox{error on the signal}\approx 30\times\mbox{error on the modulus}.
\end{equation*}
For realizations of Gaussian random processes or for lines extracted from images (figures \ref{fig:stability_gauss} and \ref{fig:stability_piece}), the ratio between the two errors never exceeds $30$ (except for one outlier). But for sums of a few sinusoids (\ref{fig:stability_comp_sin}) or audio signals (\ref{fig:stability_sorry}), we may even have:
\begin{equation*}
\mbox{error on the signal}\geq 100\times\mbox{error on the modulus}.
\end{equation*}
So instabilities appear in the reconstruction of all kinds of signals, but are stronger for sums of sinusoids and audio signals, that is for the signals whose wavelet transforms have a lot of small values, especially in the low frequencies.

\nl
These results have a theoretical justification. \citep{waldspurger} explain how, from any signal $f$, it is possible to construct $g$ such that $|f\star\psi_j|\approx |g\star\psi_j|$ for all $j$ but $f\not\approx g$ in the $l^2$-norm sense.

The principle of the construction is to multiply each $f\star\psi_j(t)$ by a phase $e^{i\phi_j(t)}$. The function $(j,t)\to e^{i\phi_j(t)}$ must be chosen so that it varies slowly in both $j$ and $t$, except maybe at the points $(j,t)$ where $f\star\psi_j(t)$ is small. Then there exist a signal $g$ such that $(f\star\psi_j(t))e^{i\phi_j(t)}\approx g\star\psi_j(t)$ for any $j,t$. Taking the modulus of this approximate equality yields
\begin{equation*}
\forall j,t\quad\quad |f\star\psi_j(t)|\approx|g\star\psi_j(t)|.
\end{equation*}
However, we may not have $f\approx g$.

This construction works for any signal $f$ (unless the wavelet transform is very localized in the time frequency domain), but the number of possible $\{e^{i\phi_j(t)}\}_{j,t}$ is larger when the wavelet transform of $f$ has a lot of small values, because the constraint of slow variation is relaxed at the points where the wavelet transform is small (especially when the small values are in the low frequencies). This is probably why instabilities occur for all kinds of signals, but more frequently when the wavelet transforms have a lot of zeroes.

\begin{figure}
\centering
\subfloat[]{
\includegraphics[width=0.23\textwidth]{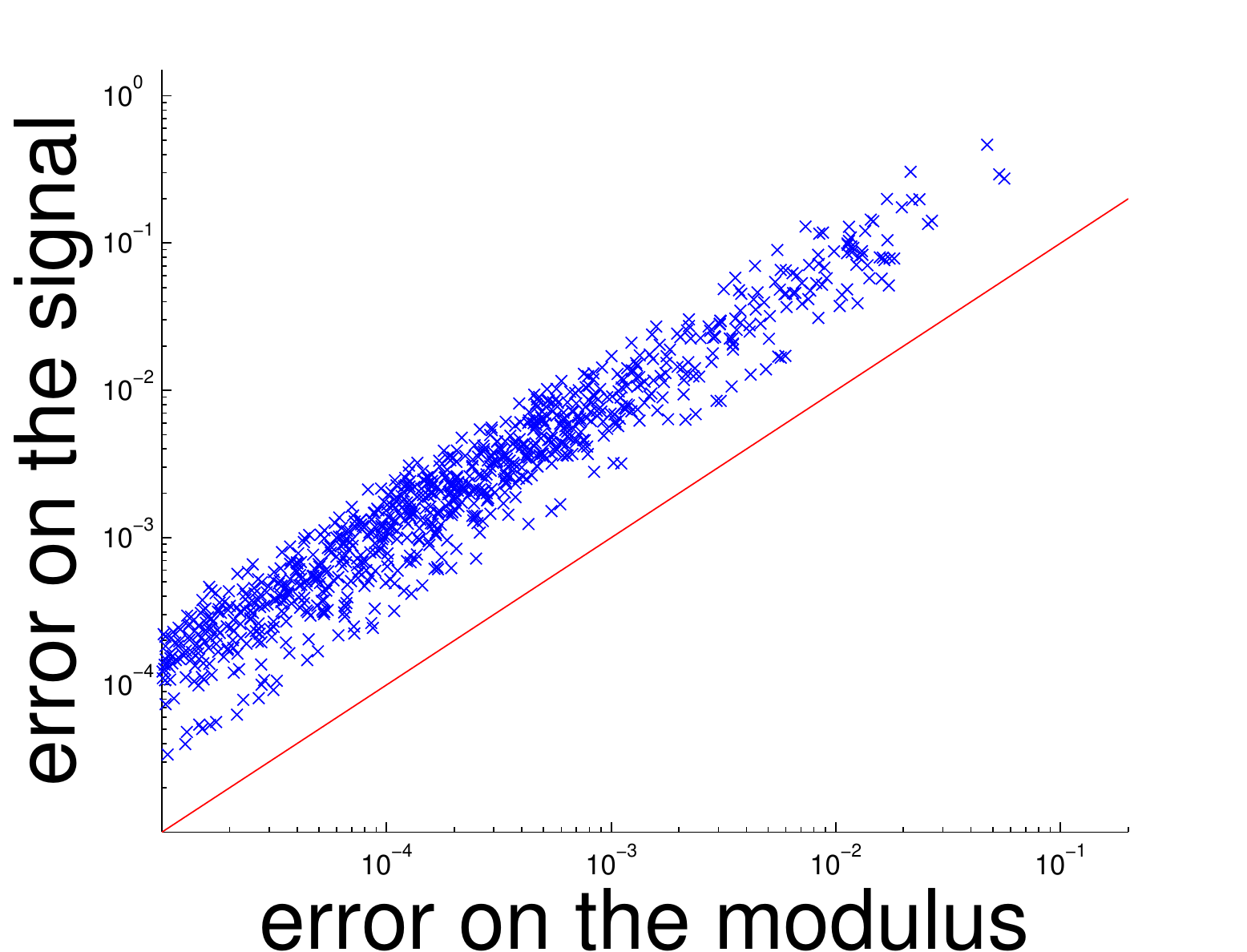}
\label{fig:stability_gauss}}
\hfil
\subfloat[]{
\includegraphics[width=0.23\textwidth]{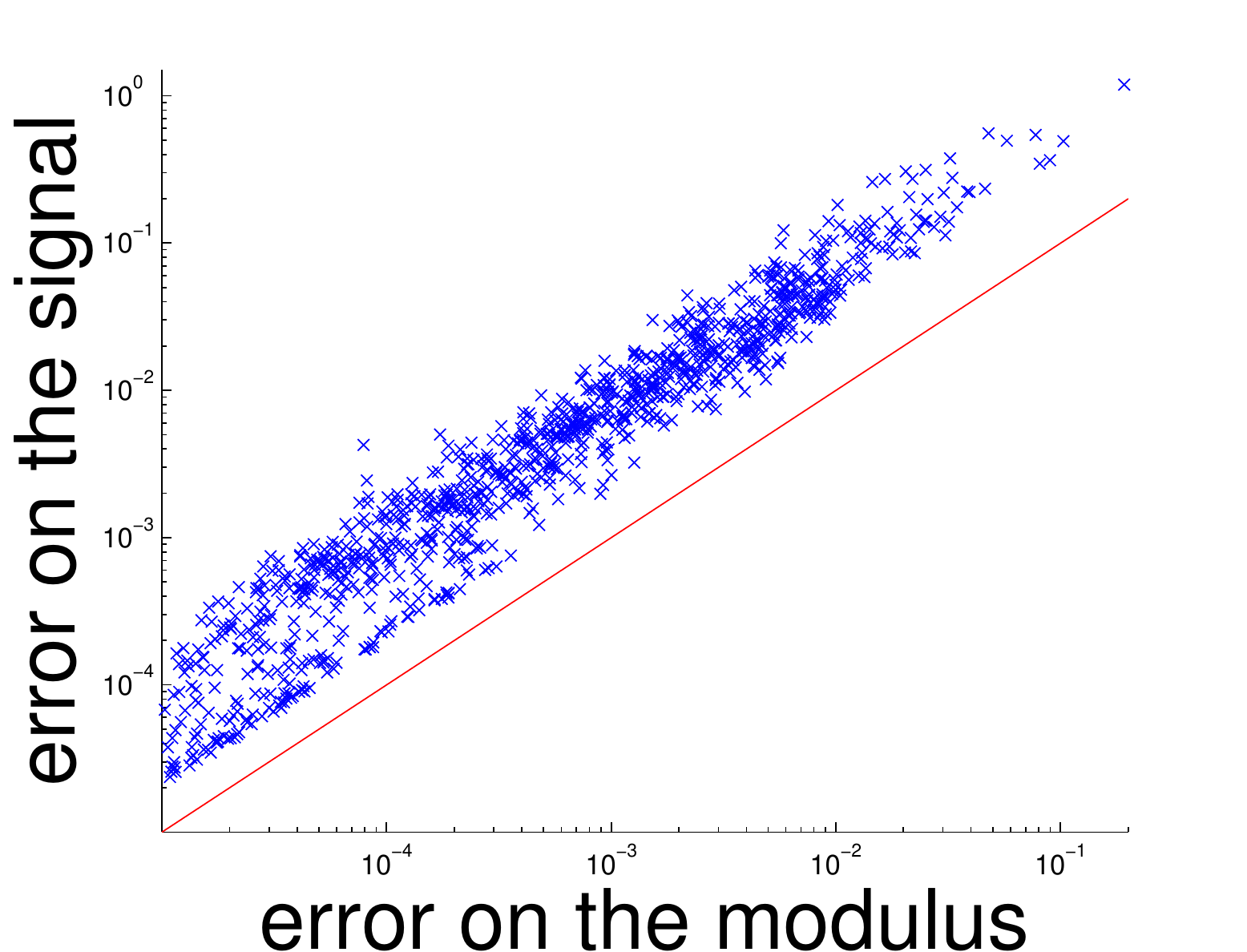}
\label{fig:stability_piece}}
\hfil
\subfloat[]{
\includegraphics[width=0.23\textwidth]{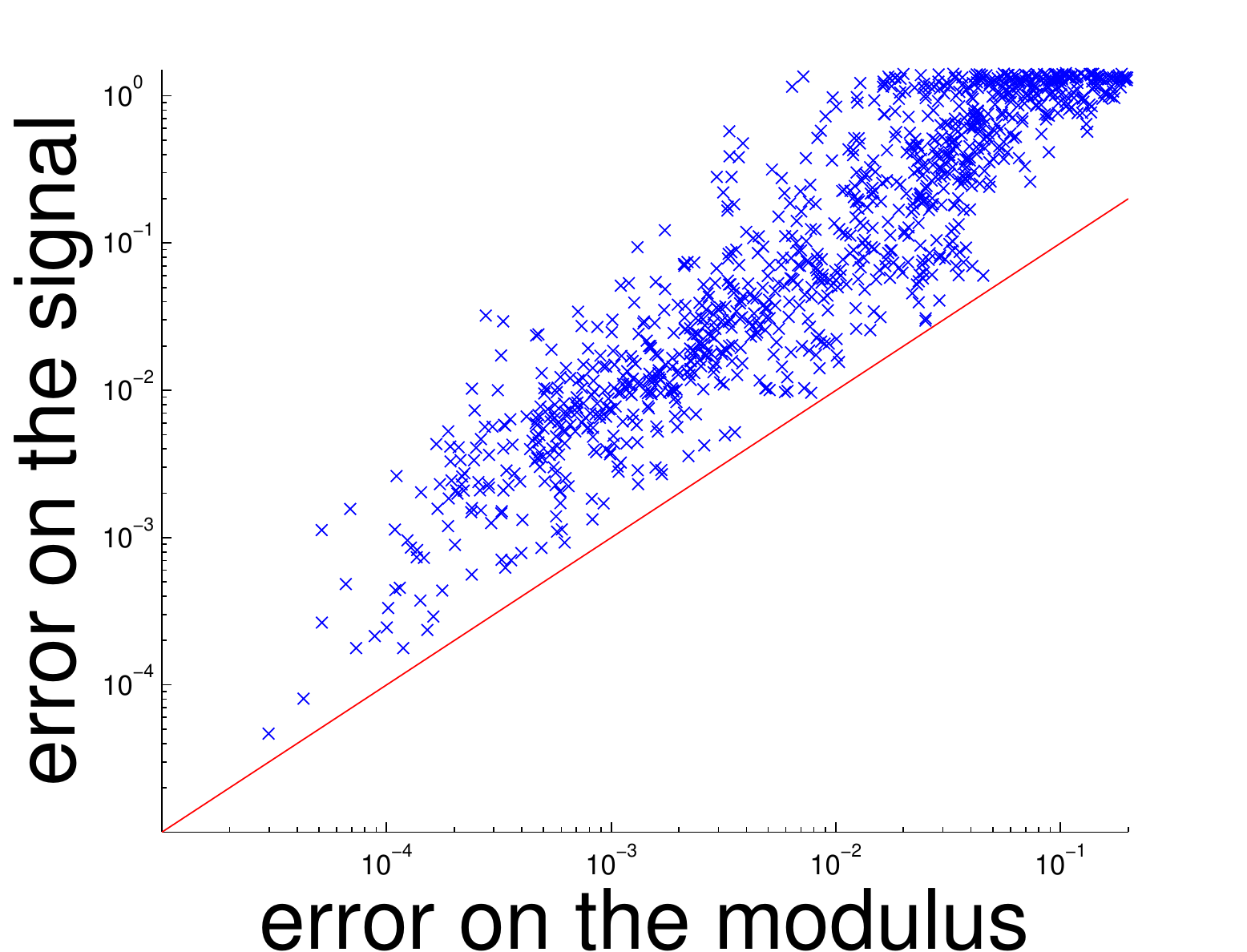}
\label{fig:stability_comp_sin}}
\hfil
\subfloat[]{
\includegraphics[width=0.23\textwidth]{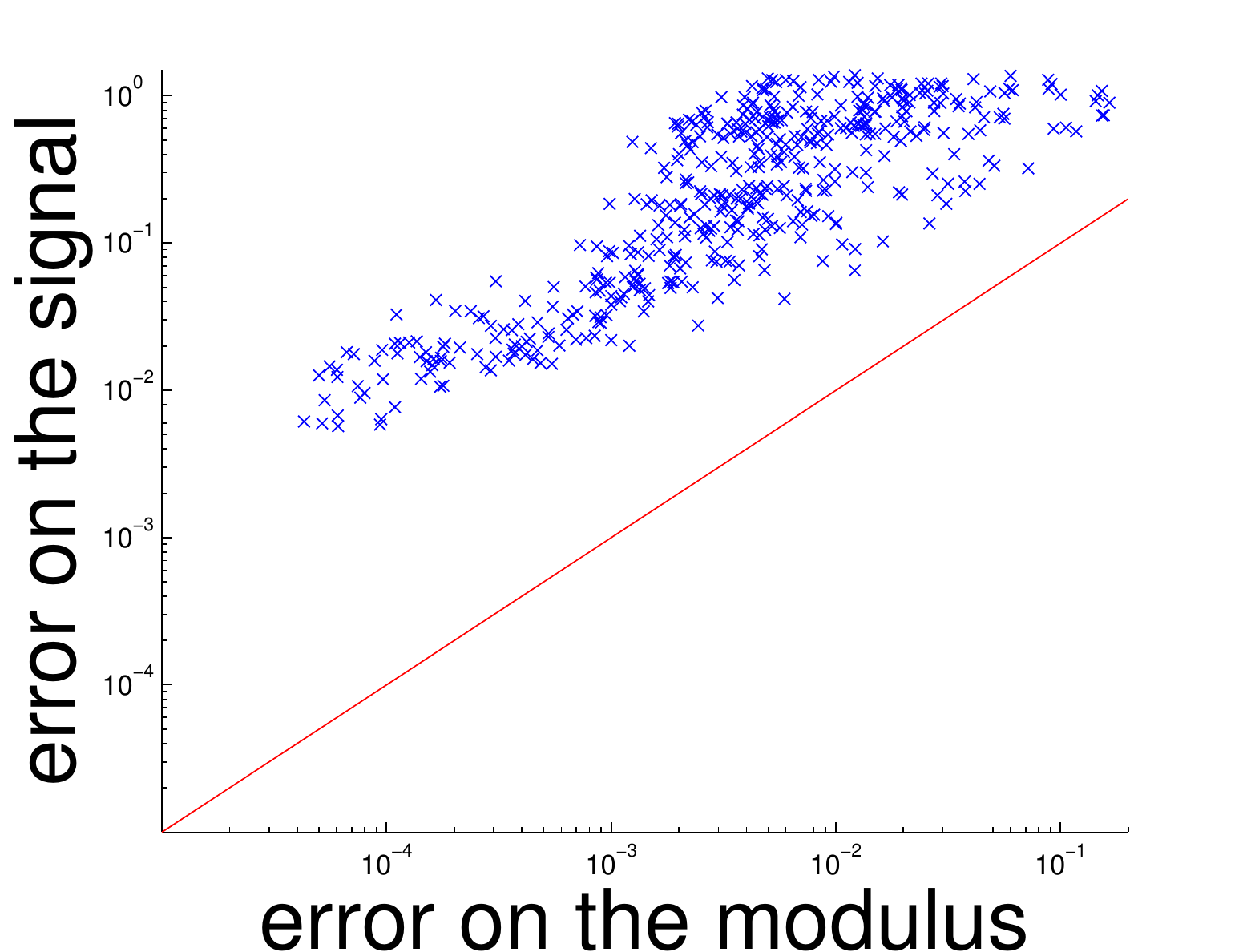}
\label{fig:stability_sorry}}
\caption{error on the signal \eqref{eq:error_on_the_signal} as a function of the error on the modulus of the wavelet transform \eqref{eq:reconstruction_error}, for several reconstruction trials; the red line $y=x$ is here to serve as a reference (a) Gaussian signals (b) lines from images (c) sums of sinusoids (d) audio signal ``I'm sorry'' \label{fig:stability}}
\end{figure}

\nl
From our experiments, it seems that the previous construction describes all the instabilities: when the wavelet transforms of $f$ and $f_{rec}$ have almost the same modulus and $f$ is not close to $f_{rec}$, then the wavelet transforms of $f$ and $f_{rec}$ are equal up to slow-varying phases $\{e^{i\phi_j(t)}\}_{j,t}$.

Figure \ref{fig:diff_phase} shows an example. The signal is a sum of sinusoids. The relative difference between the modulus is $0.3\%$, but the difference between the initial and reconstructed signals is more than a hundred times larger; it is $46\%$. The second subfigure shows the difference between the phases of the two wavelet transforms. It indeed varies slowly, in both time and frequency (actually, it is almost constant along the frequency axis), and a bit faster at the extremities, where the wavelet transform is closer to zero.

\begin{figure}
\centering
\subfloat[]{
\includegraphics[width=0.4\textwidth]{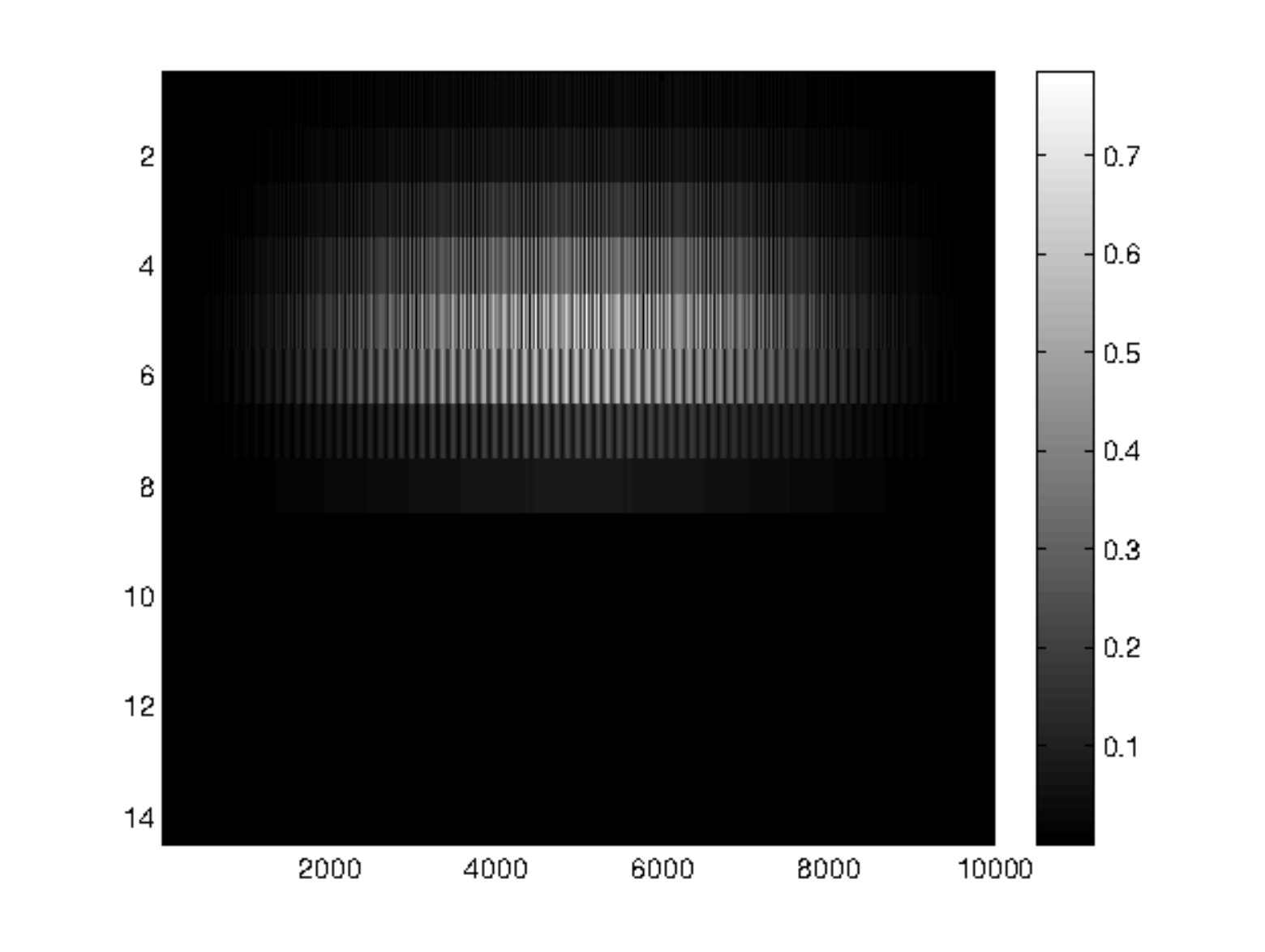}}
\hfil
\subfloat[]{
\includegraphics[width=0.4\textwidth]{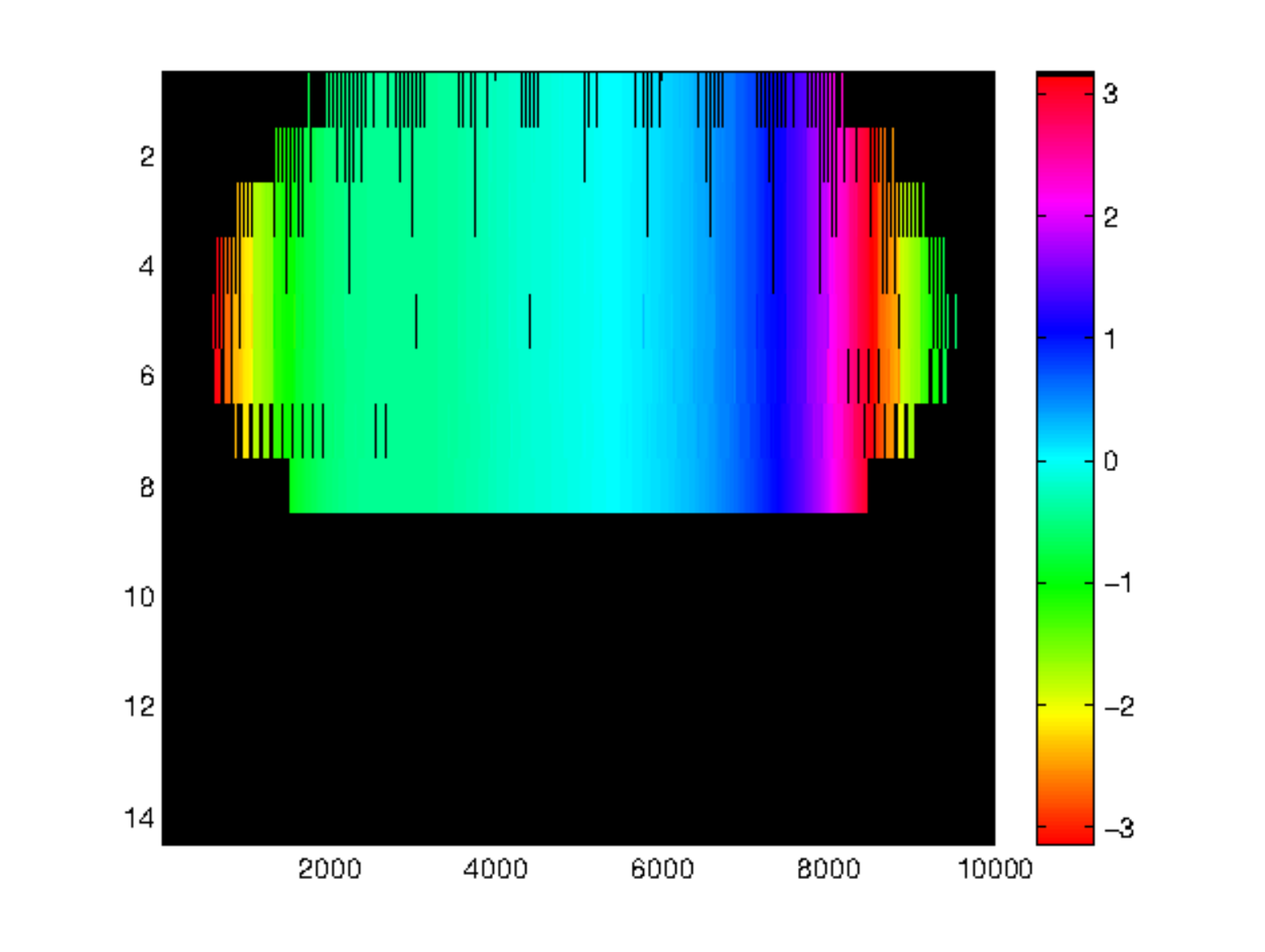}}
\caption{(a) modulus of the wavelet transform of a signal (b) phase difference between the wavelet transform of this signal and of its reconstruction (black points correspond to places where the modulus is too small for the phase to be meaningful)\label{fig:diff_phase}}
\end{figure}

\subsection{Influence of the parameters\label{ss:influence}}

In this paragraph, we analyze the importance of the two main parameters of the algorithm: the choice of the wavelets (paragraph \ref{sss:choice_family}) and the number of iterations allowed per local optimization step (paragraph \ref{sss:number_iterations}).

\subsubsection{Choice of the wavelets\label{sss:choice_family}}

Two properties of the wavelets are especially important: the exponential decay of the wavelets in the Fourier domain (so that the $Q_j$'s \eqref{eq:def_Qj} are correctly computed) and the amount of overlap between two neighboring wavelets (if the overlap is too small, then $f\star\psi_J,...,f\star\psi_{j+1}$ contain not much information about $f\star\psi_j$ and the multiscale approach is less efficient).

\begin{figure}
\centering
\subfloat[]{
\includegraphics[width=0.23\textwidth]{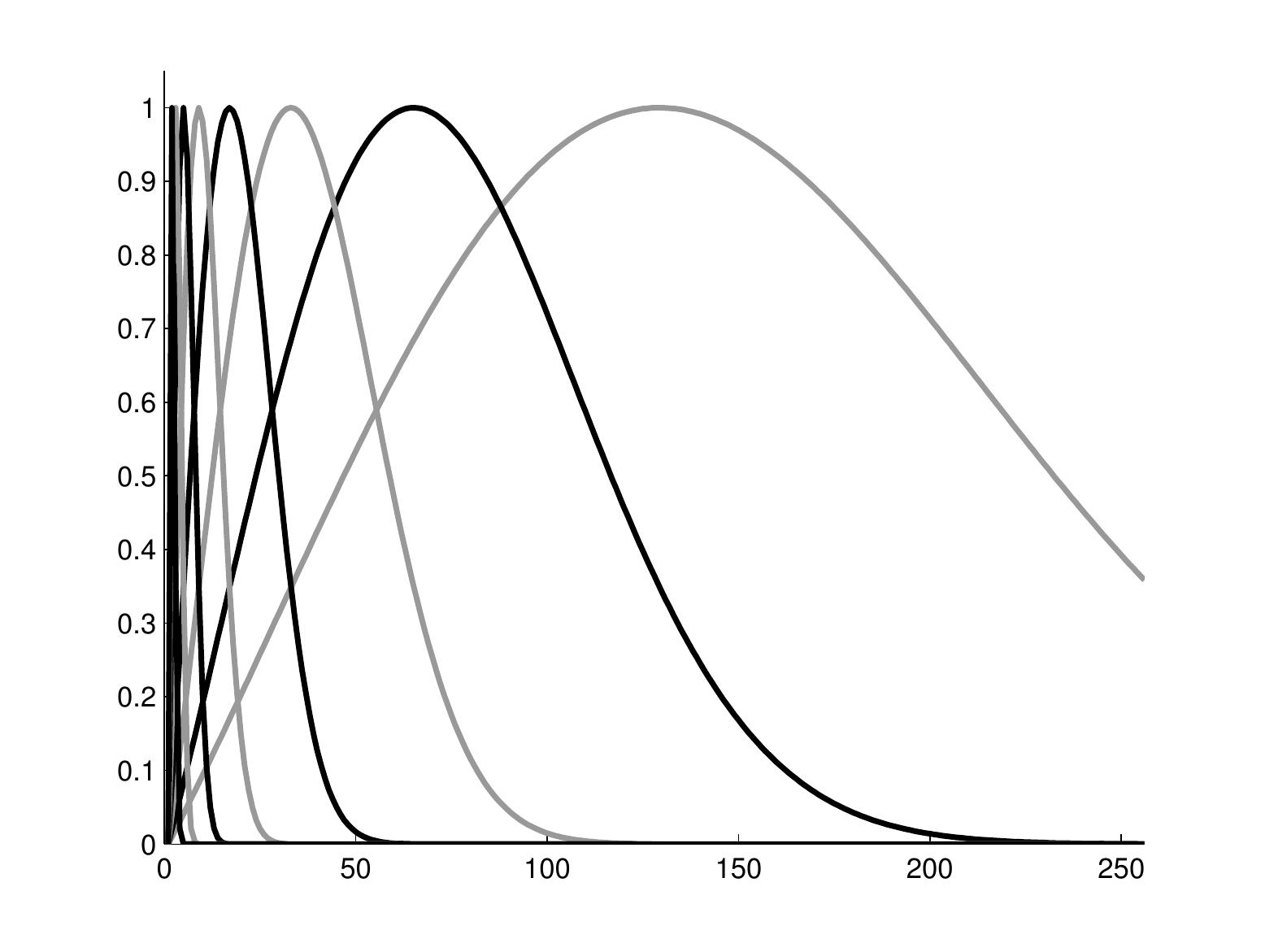}
\label{fig:wav_family_1}}
\hfil
\subfloat[]{
\includegraphics[width=0.23\textwidth]{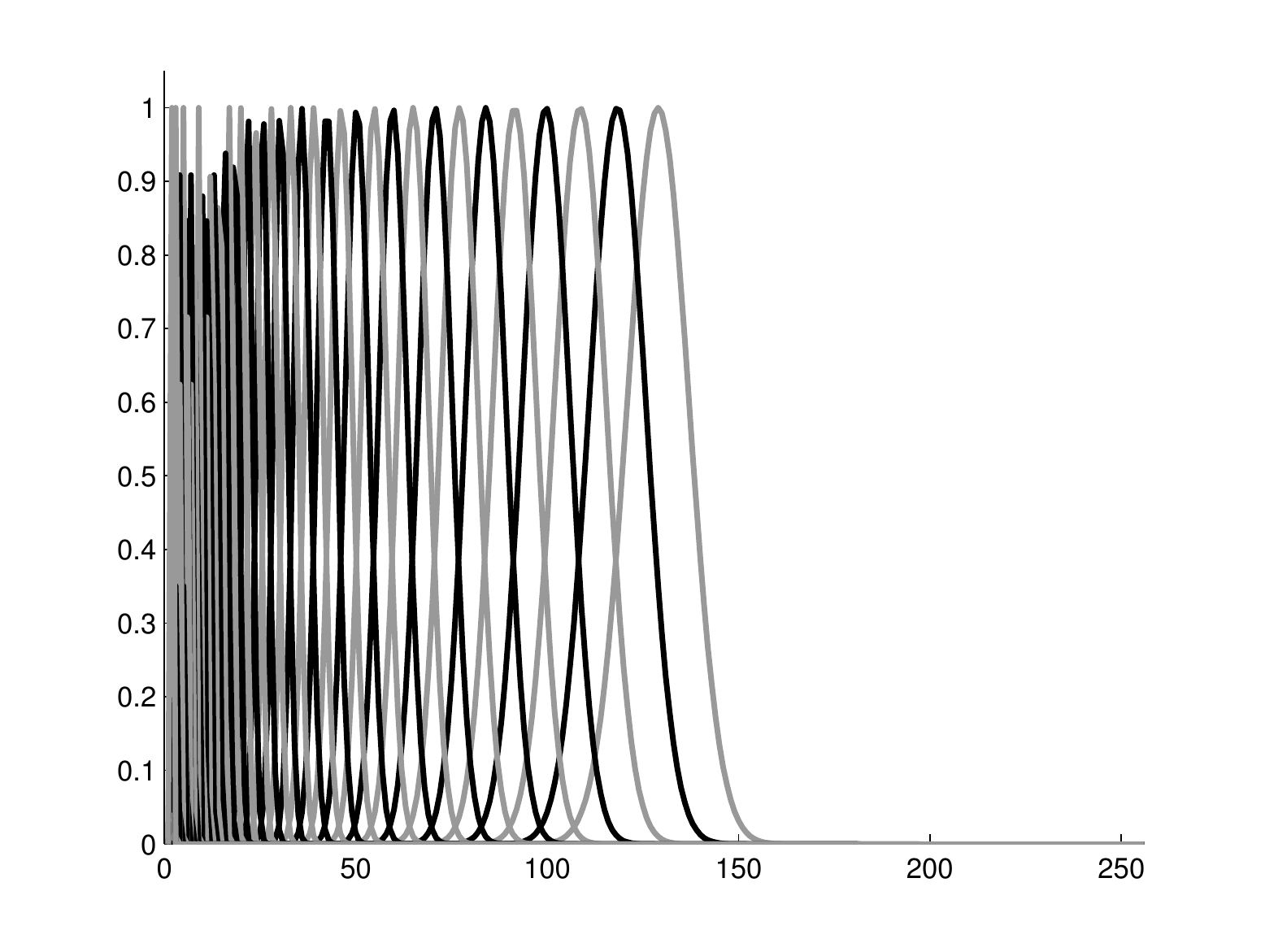}
\label{fig:wav_family_2}}
\hfil
\subfloat[]{
\includegraphics[width=0.23\textwidth]{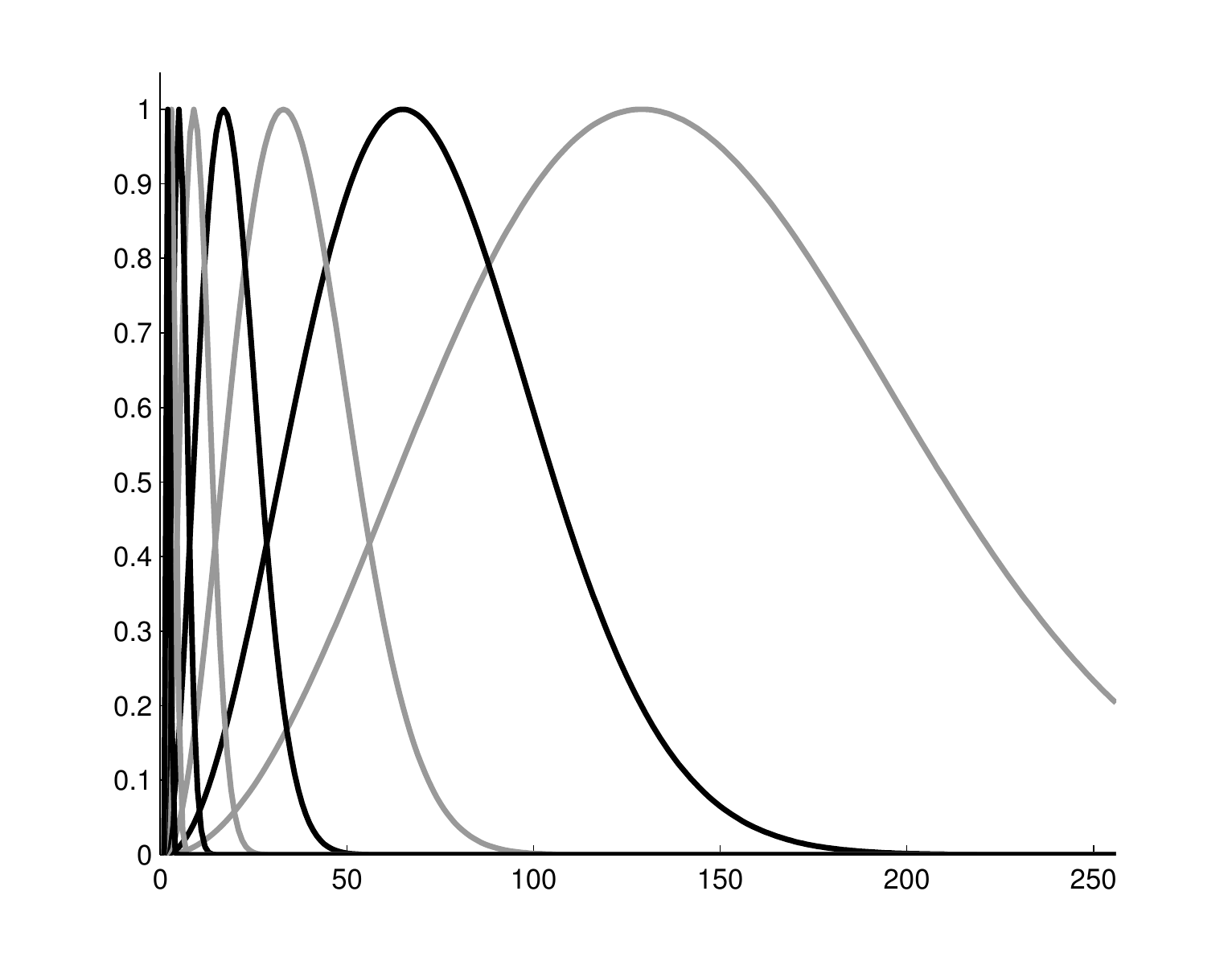}
\label{fig:wav_family_3}}
\hfil
\subfloat[]{
\includegraphics[width=0.23\textwidth]{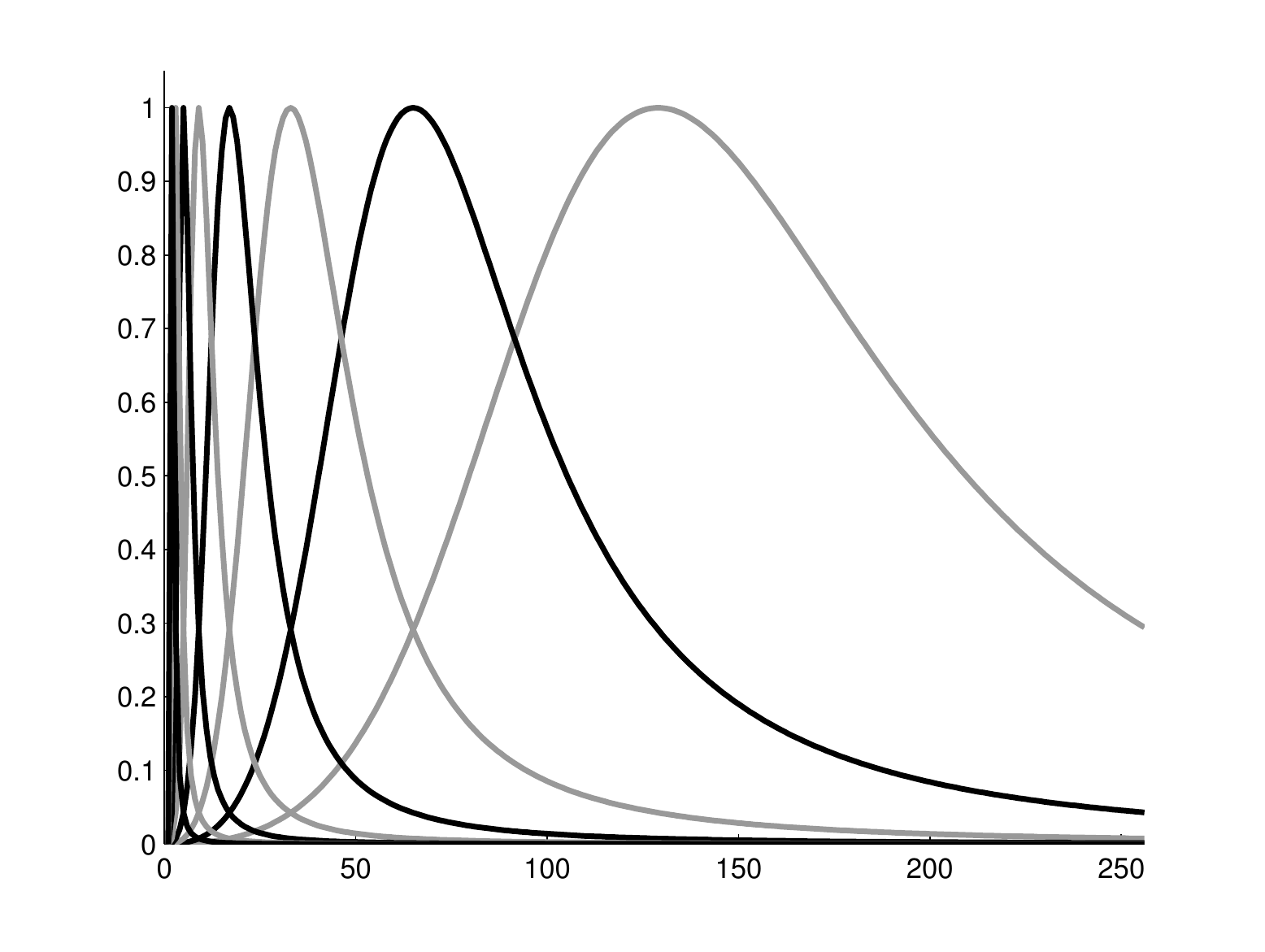}
\label{fig:wav_family_4}}
\caption{Four wavelet families. (a) Morlet (b) Morlet with dilation factor $2^{1/8}$ (c) Laplacian (d) Gammatone \label{fig:wav_family}}
\end{figure}

\nl
We compare the reconstruction results for four families of wavelets.

The first family (figure \ref{fig:wav_family_1}) is the one we used in all the previous experiments. It contains dyadic Morlet wavelets. The second family (figure \ref{fig:wav_family_2}) also contains Morlet wavelets, with a smaller bandwidth ($Q$-factor $\approx 8$) and a dilation factor of $2^{1/8}$ instead of $2$. This is the kind of wavelets used in audio processing. The third family (figure \ref{fig:wav_family_3}) consists in dyadic Laplacian wavelets $\hat\psi(\omega)=\omega^2e^{1-\omega^2}$. Finally, the wavelets of the fourth family (figure \ref{fig:wav_family_4}) are (derivatives of) Gammatone wavelets.

\begin{figure}
\centering
\subfloat[]{
\includegraphics[width=0.4\textwidth]{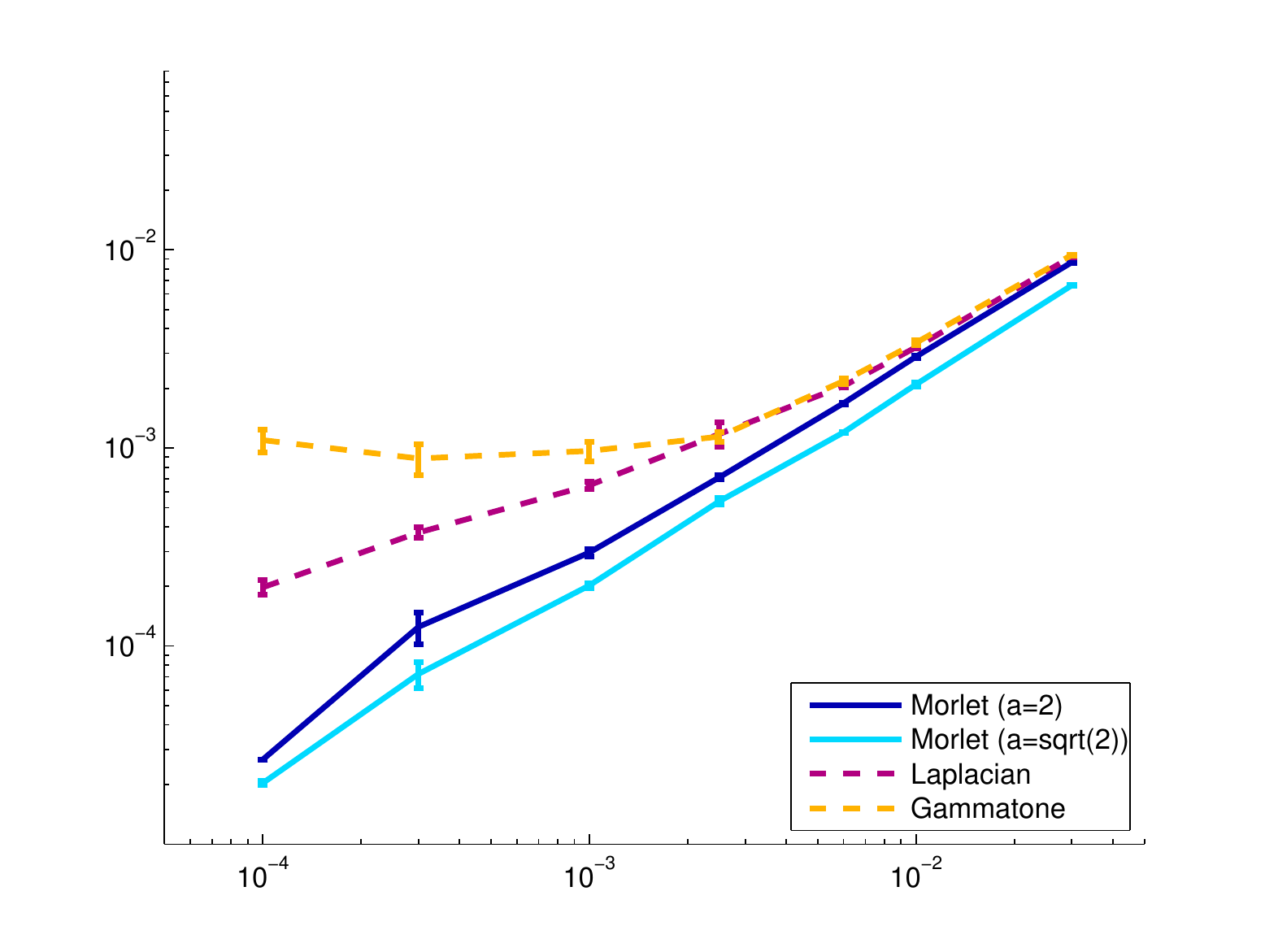}
\label{fig:family_piece}}
\hfil
\subfloat[]{
\includegraphics[width=0.4\textwidth]{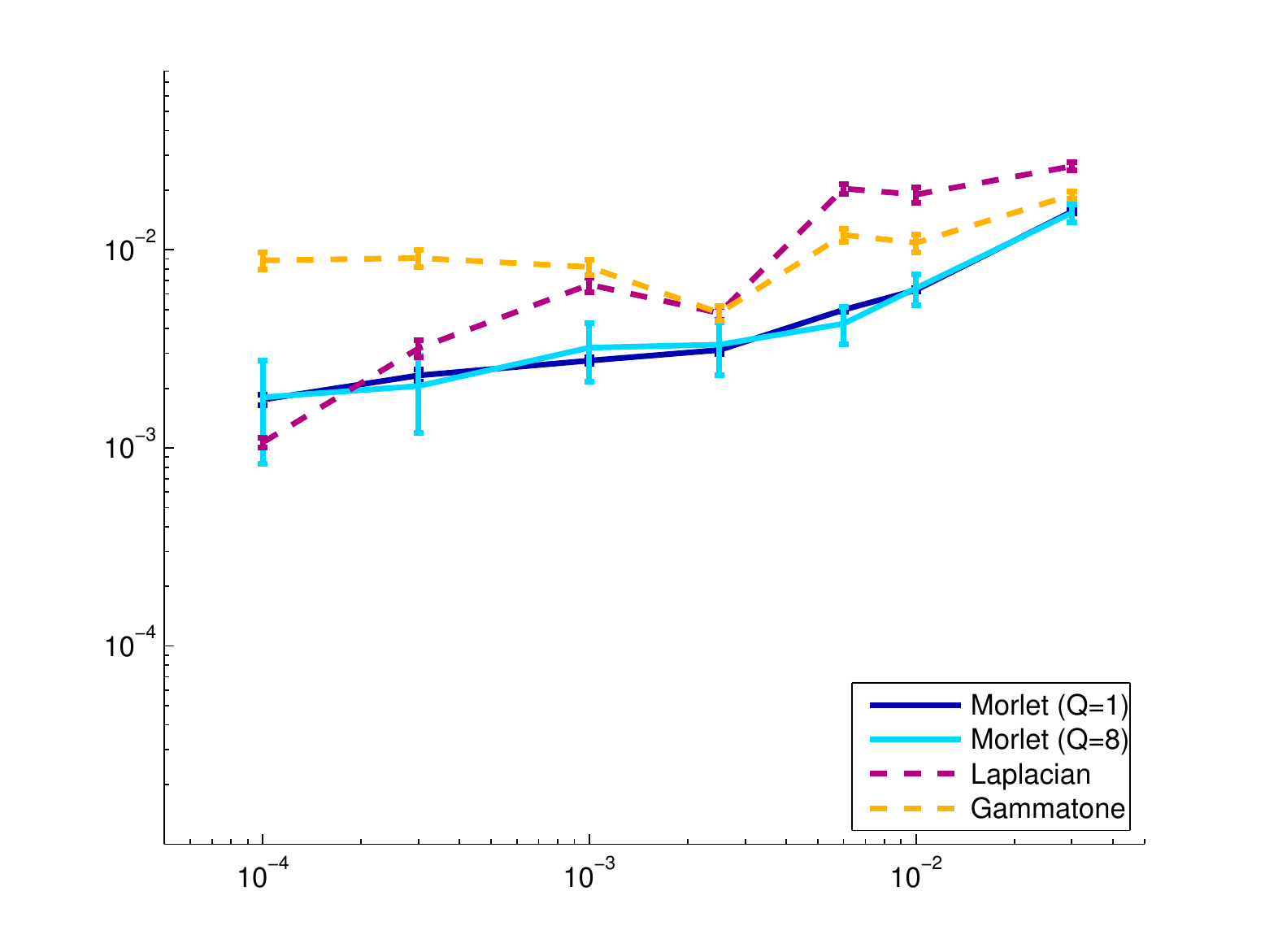}
\label{fig:family_sorry}}
\caption{Mean reconstruction error as a function of the noise for the four wavelet families displayed in \ref{fig:wav_family}. (a) Lines from images (b) Audio signal ``I'm sorry''\label{fig:family}}
\end{figure}

\nl
Figure \ref{fig:family} displays the mean reconstruction error as a function of the noise, for two classes of signals: lines randomly extracted from natural images and audio signals.

Morlet wavelets have a fast decay and consecutive wavelets overlap well. This does not depend upon the dilation factor so the reconstruction performances are similar for the two Morlet families (figures \ref{fig:family_piece} and \ref{fig:family_sorry}).

Laplacian wavelets are similar, but the overlap between consecutive wavelets is not as good. So Laplacian wavelets globally have the same behavior as Morlet wavelets but require significantly more computational effort to reach the same precision. Figures \ref{fig:family_piece} and \ref{fig:family_sorry} have been generated with a maximal number of iterations per optimization step equal to $30000$ (instead of $10000$) and the reconstruction error is still larger.

The decay of Gammatone wavelets is polynomial instead of exponential. The products $Q_j$ cannot be efficiently estimated and our method performs significantly worse. In the case of lines extracted from images (\ref{fig:family_piece}), the reconstruction error stagnates at $0.1\%$, even when the noise is of the order of $0.01\%$. For audio signals (\ref{fig:family_sorry}), it is around $1\%$ for any amount of noise.


\subsubsection{Number of iterations in the optimization step\label{sss:number_iterations}}

The maximal number of iterations allowed per local optimization step (paragraph \ref{ss:local_optimization}) can have a huge impact on the quality of the reconstruction.

Figure \ref{fig:iterations} represents, for an audio signal, the reconstruction error as a function of this number of iterations. As the objective functions are not convex, there are no guarantees on the speed of the decay when the number of iterations increases. It can be slow and even non-monotonic. Nevertheless, it clearly globally decays.

The execution time is roughly proportional to the number of iterations. It is thus important to adapt this number to the desired application, so as to reach the necessary precision level without making the algorithm exaggeratedly slow.

\begin{figure}
\centering
\subfloat[]{
\includegraphics[width=0.23\textwidth]{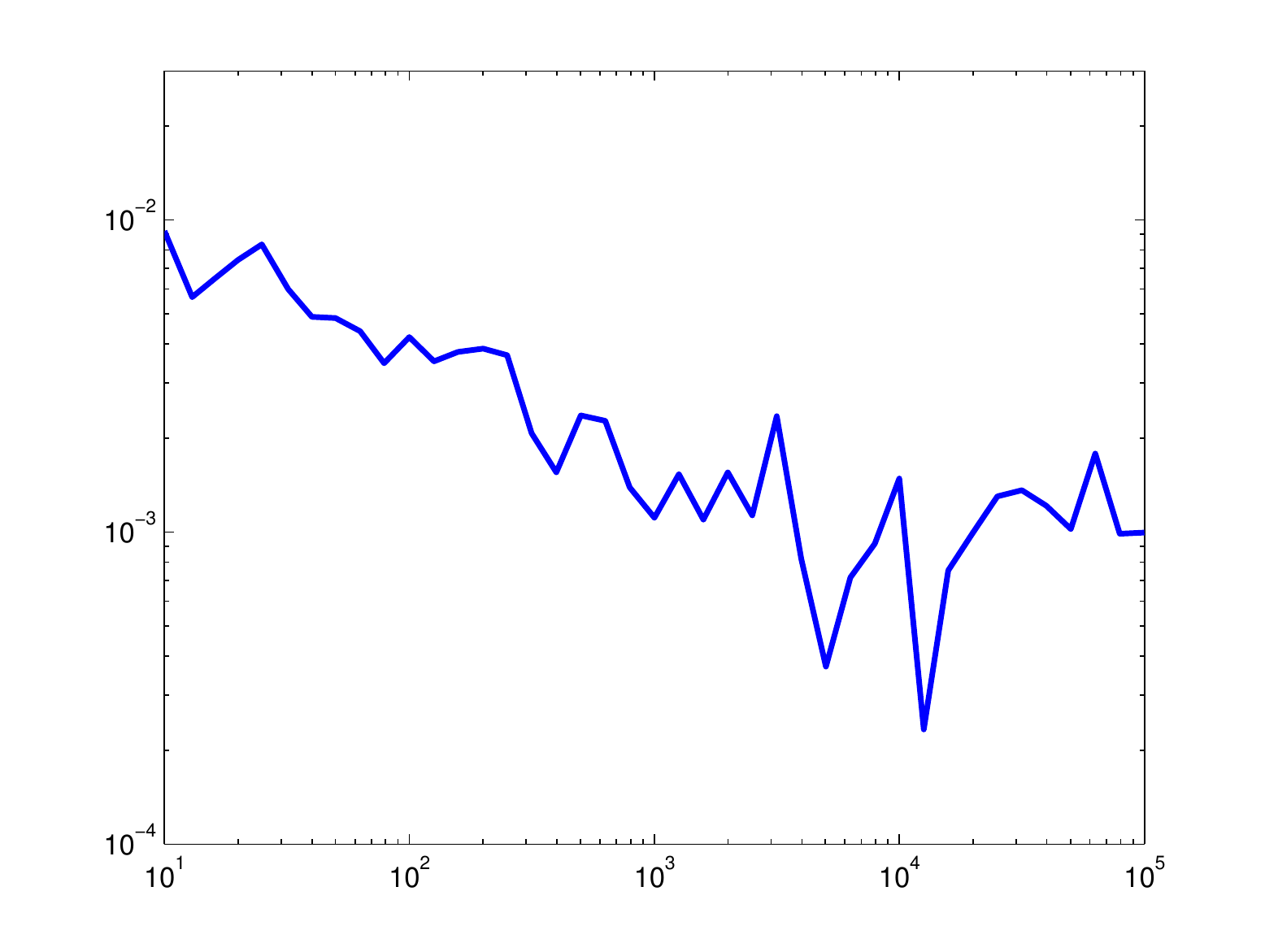}
\label{fig:iterations_1}}
\hfil
\subfloat[]{
\includegraphics[width=0.23\textwidth]{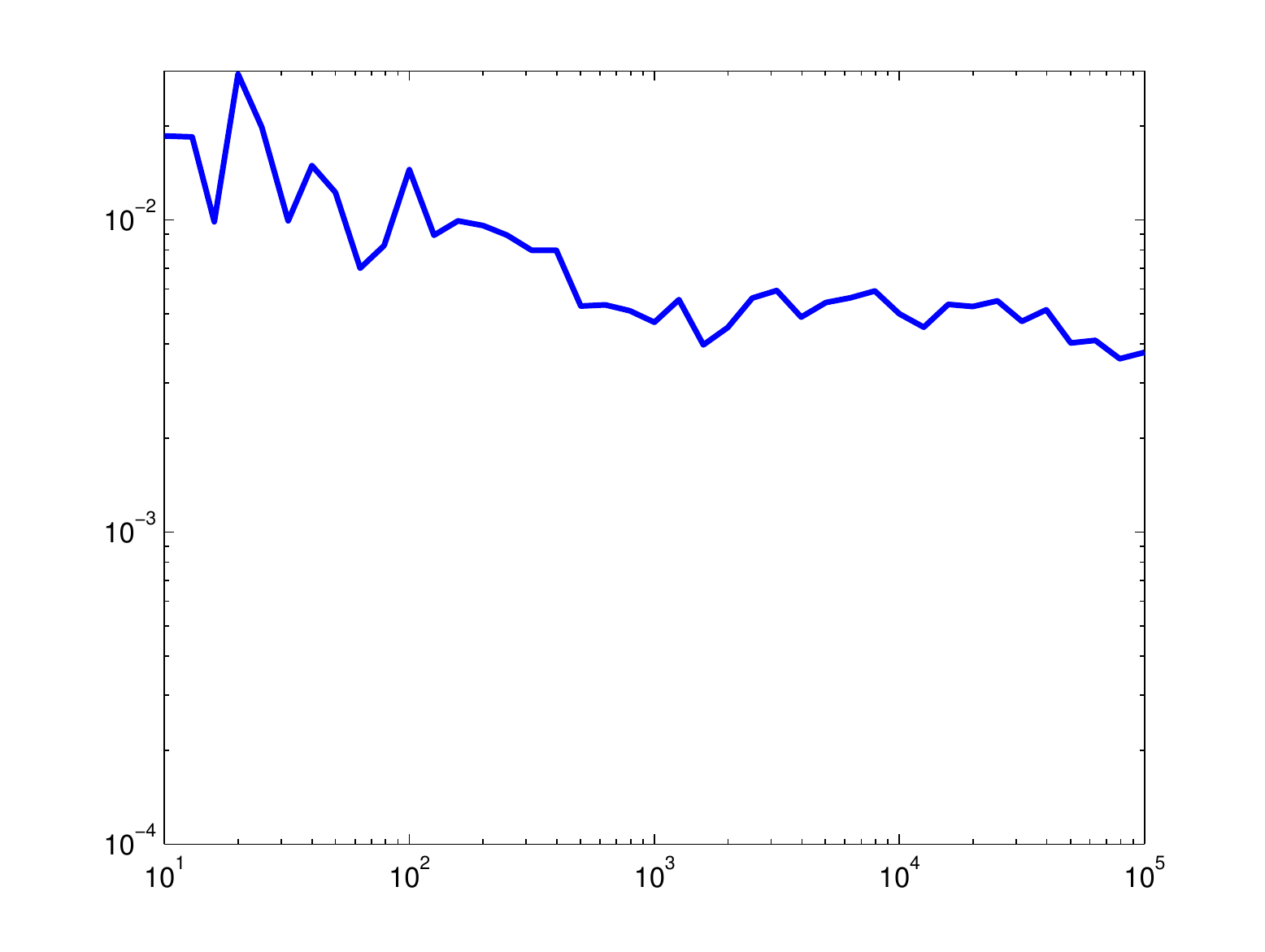}
\label{fig:iterations_2}}
\caption{for the audio signal ``I'm sorry'', reconstruction error as a function of the maximal number of iterations (a) with $0.01\%$ of noise (b) with $0.6\%$ of noise\label{fig:iterations}}
\end{figure}

\section{Conclusion}

We have presented an new iterative algorithm that reconstructs a signal from its scalogram. This algorithm is based on a new reformulation of the reconstruction problem, using the analytic extension of the wavelet transform. It is precise and stable to noise, and has a sufficiently low complexity to be applied to audio signals.

In future works, we plan to investigate further ways to speed up the reconstruction, including parallelization, and to test our algorithm on concrete applications, like source separation.

\section{Acknowledgments}

A large portion of this work has been done while the author was at École Normale Supérieure de Paris, where she has been partially supported by ERC grant InvariantClass 320959.

\appendices

\section{Proof of lemma \ref{lem:extension}\label{app:extension}}

\begin{lem*}[\ref{lem:extension}]
For any $f$ satisfying the analyticity condition \eqref{eq:analyticity_discrete},
\begin{align*}
\forall z\in\C\quad\quad
&P({|f\star\psi_j|^2})(r_jz)=P({(f\star\psi_j^{low})\overline{(f\star\psi_j^{high})}})(z)\\
\mbox{and}\quad&P({g_j^2})(r_jz)=P({Q_j})(z).
\end{align*}
\end{lem*}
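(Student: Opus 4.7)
The plan is to match Fourier coefficients on both sides. The second identity is essentially tautological: by the definition of $Q_j$,
\begin{equation*}
P(g_j^2)(r_j z) = \sum_{k\in I} \widehat{g_j^2}[k](r_j z)^k = \sum_{k\in I} \hat{Q}_j[k]\, z^k = P(Q_j)(z),
\end{equation*}
where I abbreviate $I := \{\lfloor N/2\rfloor - N + 1,\dots,\lfloor N/2\rfloor\}$ for the index set appearing in the definition of $P$.

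For the substantive first identity, my plan is to exhibit both sides as the restriction of a single Laurent polynomial on $\C^*$ to two concentric circles, of radii $1$ and $r_j$. The analyticity condition \eqref{eq:analyticity_discrete} makes the ordinary polynomial
\begin{equation*}
F_j(w) := \sum_{k=0}^{\lfloor N/2\rfloor} \hat f[k]\hat\psi_j[k]\, w^k
\end{equation*}
satisfy $N(f\star\psi_j)[n] = F_j(e^{2\pi i n/N})$, and the definitions $\hat\psi_j^{low}[k]=\hat\psi_j[k]r_j^k$ and $\hat\psi_j^{high}[k]=\hat\psi_j[k]r_j^{-k}$ are precisely tailored so that $N(f\star\psi_j^{low})[n] = F_j(r_j e^{2\pi i n/N})$ and $N(f\star\psi_j^{high})[n] = F_j(r_j^{-1} e^{2\pi i n/N})$. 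Introducing the conjugate companion
\begin{equation*}
\tilde F_j(w) := \sum_{k=0}^{\lfloor N/2\rfloor} \overline{\hat f[k]\hat\psi_j[k]}\, w^{-k}
\end{equation*}
and using that $r_j>0$ is real, a direct check gives $N\overline{(f\star\psi_j)[n]} = \tilde F_j(e^{2\pi i n/N})$ and $N\overline{(f\star\psi_j^{high})[n]} = \tilde F_j(r_j e^{2\pi i n/N})$. The Laurent polynomial $H := F_j\tilde F_j$, whose degrees lie in $\{-\lfloor N/2\rfloor,\dots,\lfloor N/2\rfloor\}$, then satisfies
\begin{align*}
H(e^{2\pi i n/N}) &= N^2\, |f\star\psi_j|^2[n],\\
H(r_j e^{2\pi i n/N}) &= N^2\, (f\star\psi_j^{low})\overline{(f\star\psi_j^{high})}[n].
\end{align*}
Reading off the Laurent coefficients of $H$ by inverse DFT yields $\widehat{(f\star\psi_j^{low})\overline{(f\star\psi_j^{high})}}[k] = r_j^k\,\widehat{|f\star\psi_j|^2}[k]$ for every $k\in I$, and summing against $z^k$ delivers the stated polynomial identity.

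The main obstacle is reconciling the degree set of $H$ with the index set $I$ of $P$. For odd $N$ both equal $\{-(N-1)/2,\dots,(N-1)/2\}$ and the DFT-coefficient identification is immediate. For even $N$, $H$ carries one extra Laurent degree at $-N/2$ that lies outside $I$, which would otherwise spoil the matching at the Nyquist index $k=N/2$. One verifies that both extremal coefficients $a_{\pm N/2}$ of $H$ equal $\hat f[0]\hat\psi_j[0]\,\overline{\hat f[N/2]\hat\psi_j[N/2]}$ up to conjugation, so they vanish under the standard zero-mean normalization $\hat\psi(0)=0$ of the mother wavelet. With this edge case handled, the coefficient identification is tight and the argument closes.
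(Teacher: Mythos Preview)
Your proof is correct and follows essentially the same route as the paper: both reduce the statement to the Fourier-coefficient relation $\widehat{|f\star\psi_j|^2}[k]\,r_j^{k}=\widehat{(f\star\psi_j^{low})\overline{(f\star\psi_j^{high})}}[k]$ and verify it by the same convolution computation, which you package as the Laurent product $H=F_j\tilde F_j$ evaluated on two concentric circles while the paper writes it out directly as a circular convolution of Fourier transforms. Your handling of the Nyquist index for even $N$ is in fact more explicit than the paper's, which deals with the same issue only tacitly by starting its sum at $l=1$ (implicitly using $\hat\psi_j[0]=\hat\psi(0)=0$).
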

\begin{proof}
Recall that, by definition, for any $h\in\C^N$,
\begin{equation*}
\forall z\in\C\quad\quad
P(h)(z)=\underset{k=\left\lfloor\frac{N}{2}\right\rfloor-N+1}{\overset{\left\lfloor\frac{N}{2}\right\rfloor}{\sum}}\hat h[k]z^k.
\end{equation*}
So for any two signals $h,H$, the condition $P(h)(r_jz)=P(H)(z),\forall z\in\C$ is equivalent to
\begin{equation}\label{eq:relation_fourier}
\forall k=\left\lfloor\frac{N}{2}\right\rfloor-N+1,...,\left\lfloor\frac{N}{2}\right\rfloor,\quad\quad
\hat h[k]r_j^k=\hat H[k].
\end{equation}
Applied to $h=g_j^2$ and $H=Q_j$, this property yields the equality $P({g_j^2})(r_jz)=P({Q_j})(z),\forall z\in\C$: by the definition of $Q_j$ in \eqref{eq:def_Qj}, Equation \eqref{eq:relation_fourier} is clearly satisfied.

Let us now show that
\begin{equation*}
P({|f\star\psi_j|^2})(r_jz)=P({(f\star\psi_j^{low})\overline{(f\star\psi_j^{high})}})(z),\,\forall z\in\C.
\end{equation*}
It suffices to prove that \eqref{eq:relation_fourier} holds, that is,
\begin{align*}
\forall k\in\left\lfloor\frac{N}{2}\right\rfloor-N+1&,...,\left\lfloor\frac{N}{2}\right\rfloor,\\
\widehat{|f\star\psi_j|^2} [k]r_j^k&=\widehat{(f\star\psi_j^{low})\overline{(f\star\psi_j^{high})}} [k].
\end{align*}
Indeed, because the analyticity condition \eqref{eq:analyticity_discrete} holds, we have for all $k$,
\begin{align*}
\widehat{|f\star\psi_j|^2}[k]
&=\frac{1}{N}\left(\widehat{f\star\psi_j}\right)\star\left(\widehat{\overline{f\star\psi_j}}\right)[k]\\
&=\frac{1}{N}\underset{l=1}{\overset{\lfloor N/2\rfloor}{\sum}}\hat f[l]\hat\psi_j[l]\overline{\hat f[l-k]}\overline{\hat\psi_j[l-k]}\\
&=\frac{r_j^{-k}}{N}\underset{l=1}{\overset{\lfloor N/2\rfloor}{\sum}}\hat f[l]\hat\psi_j^{low}[l]\overline{\hat f[l-k]}\overline{\hat\psi_j^{high}[l-k]}\\
&=\frac{r_j^{-k}}{N}\left(\widehat{f\star\psi^{low}_j}\right)\star\left(\widehat{\overline{f\star\psi^{high}_j}}\right)[k]\\
&=r_j^{-k}\left(\widehat{(f\star\psi_j^{low})(\overline{f\star\psi_j^{high}})}\right)[k].
\end{align*}
\end{proof}

\section{Proofs of Proposition \ref{prop:critical_points_1} and Theorem \ref{thm:critical_points_2}\label{s:critical_points}}

\begin{prop*}[\ref{prop:critical_points_1}]
Let $f_{rec}$ be the function that we aim at reconstructing. We assume that, for any $j=0,\dots,J$, the functions $f_{rec}\star\psi_j^{low}$ and $f_{rec}\star\psi_j^{high}$ have no zero entry. Then, on the vector space defined by Equation \eqref{eq:additional_constraint_mod}, the critical points of $\mathrm{obj}_1$ that are not strict saddle are
\begin{align*}
\Big\{
\Big(&\gamma (f_{rec}\star \psi_{J}^{low}),\frac{1}{\overline{\gamma}}(f_{rec}\star \psi_{J-1}^{low}),\gamma (f_{rec}\star \psi_{J-2}^{low}),\dots,\\
&\frac{1}{\overline{\gamma}}(f_{rec}\star\psi_J^{high}),\gamma (f_{rec}\star\psi_{J-1}^{high}),\frac{1}{\overline{\gamma}}(f_{rec}\star\psi_{J-2}^{high}),\dots\Big),\\
&\quad\quad\mbox{such that } \gamma\in (\C-\{0\})^N\Big\}
\quad\overset{def}{=}\quad\mathcal{C}_1.
\end{align*}
\end{prop*}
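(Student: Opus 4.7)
The plan is, as a first step, to reduce the problem to a finite-dimensional scalar problem at each sample index $n$: since both the constraint \eqref{eq:additional_constraint_mod} and both pieces of $\mathrm{obj}_1$ act pointwise, it suffices to analyze the Wirtinger critical points of
\begin{align*}
F(x_0,\ldots,x_J,y) &= |x_0\overline{y}-q_0|^2 \\
&\quad + \sum_{j=1}^{J}|a^{p_1}x_j\overline{x_{j-1}}-q_j|^2,
\end{align*}
where I write $x_j = h_j^{low}[n]$, $y = h_0^{high}[n]$, $q_j = Q_j[n]$, and $r_j$ for the $j$-th residual. A direct Wirtinger computation yields four families of critical equations: $x_0\overline{r_0}=0$, $y\,r_0+a^{p_1}x_1\overline{r_1}=0$, $x_{j-1}r_j+x_{j+1}\overline{r_{j+1}}=0$ for $1\le j\le J-1$, and $x_{J-1}r_J=0$. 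I would split the analysis according to whether any of the variables vanishes.

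Second, in the all-nonzero regime the endpoint equations force $r_0=r_J=0$, and the interior three-term recurrence then propagates $r_j=0$ for every $j$. The identities $x_j\overline{x_{j-1}}=q_j/a^{p_1}$ are also satisfied by the true values $x_j^{*}=f_{rec}\star\psi_j^{low}[n]$ and $y^{*}=f_{rec}\star\psi_0^{high}[n]$, so the ratios $\alpha_j := x_j/x_j^{*}$ obey $\alpha_j = 1/\overline{\alpha_{j-1}}$. Setting $\alpha_J = \gamma$ forces the alternating pattern $\alpha_j \in \{\gamma, 1/\overline{\gamma}\}$ depending on the parity of $J-j$, and a parallel computation extends the alternation to the $h_j^{high}$-variables. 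This is exactly the parametrization of $\mathcal{C}_1$, and since all residuals vanish the Hessian is positive semidefinite, so these points are not strict saddles.

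Third, I would handle the remaining critical points, where at least one variable vanishes. A short chase of the critical equations shows that the only possibility is a single "break" $x_{j^*}=0$ for some $0\le j^*\le J$ (the case $y=0$ forces $x_0=0$ and falls into this), with residuals $r_{j^*}=-q_{j^*}$ and $r_{j^*+1}=-q_{j^*+1}$ both nonzero by the nonvanishing hypothesis on $f_{rec}\star\psi_j^{low,\,high}$. The second-order Taylor expansion is
$$Q(\epsilon) = \sum_j |\rho_j|^2 + 2\sum_j \mathrm{Re}(\overline{r_j}\sigma_j),$$
with $\rho_j,\sigma_j$ the linear and bilinear parts of the perturbed residual. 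For a generic interior break $1\le j^*\le J-2$ my test perturbation is supported on $(\epsilon_{j^*},\epsilon_{j^*+1},\epsilon_{j^*+2})$, with $(\epsilon_{j^*+1},\epsilon_{j^*+2})$ chosen in the $2$-real-dimensional kernel of $|\rho_{j^*+2}|^2$. That kernel is nontrivial because the critical condition $r_{j^*+2}=0$ forces $\overline{x_{j^*+1}^{c}}\,x_{j^*+2}^{c} = q_{j^*+2}/a^{p_1}$, so $\rho_{j^*+2}$ is a single linear combination of $\epsilon_{j^*+2}$ and $\overline{\epsilon_{j^*+1}}$. On this kernel the only surviving indefinite term is the Hermitian cross $-2a^{p_1}\mathrm{Re}(\overline{q_{j^*+1}}\epsilon_{j^*+1}\overline{\epsilon_{j^*}})$, competing with a single positive $|\epsilon_{j^*}|^2$-term from $|\rho_{j^*}|^2+|\rho_{j^*+1}|^2$; the resulting $2\times 2$ Hermitian minor in $(\epsilon_{j^*},\epsilon_{j^*+1})$ has determinant $-a^{2p_1}|q_{j^*+1}|^2<0$, so $Q$ takes strictly negative values — the critical point is strict saddle. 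Endpoint cases ($j^*\in\{0,J-1,J\}$) and multi-break configurations fall under the same mechanism with one $|\rho|^2$-block absent; the relevant $2\times 2$ Hessian minor has vanishing diagonal and nonzero off-diagonal, and is trivially indefinite.

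The hard part will be this strict-saddle step: a naive perturbation in $(\epsilon_{j^*},\epsilon_{j^*+1})$ alone yields a Hessian minor whose sign depends on inequalities among the $|q_j|$'s that can easily fail (for instance when $|q_{j^*+1}|$ is small compared with the $|x_j|$'s), so the rank-deficiency of the $|\rho_{j^*+2}|^2$ block forced by $r_{j^*+2}=0$ really has to be exploited to produce a universally negative direction. A secondary bookkeeping annoyance is tracking the phases of the $q_j$'s when the Hermitian cross terms from $\overline{r_j}\sigma_j$ and the $\C$-bilinear cross terms inside $|\rho_{j^*+2}|^2$ coexist, so that they can simultaneously be made to interfere constructively with the chosen kernel direction.
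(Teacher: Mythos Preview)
Your overall plan---pointwise reduction, Wirtinger first-order conditions, the alternating $\gamma,1/\overline\gamma$ pattern in the all-nonzero case, and a second-order argument for the degenerate case---is the same strategy as the paper's. The paper organises it as a decreasing induction on $j$ (so that the ``break'' is always adjacent to the already-handled boundary), while you try to treat a break in the middle directly; that structural choice is where your argument runs into trouble.

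\medskip

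\textbf{The gap.} In the strict-saddle step for an interior break $1\le j^*\le J-2$, your test perturbation supported on $(\epsilon_{j^*},\epsilon_{j^*+1},\epsilon_{j^*+2})$ also perturbs the residual at level $j^*+3$:
\[
\rho_{j^*+3}=a^{p_1}\bigl(\epsilon_{j^*+3}\,\overline{x_{j^*+2}}+x_{j^*+3}\,\overline{\epsilon_{j^*+2}}\bigr)
           =a^{p_1}x_{j^*+3}\,\overline{\epsilon_{j^*+2}},
\]
which is nonzero whenever $\epsilon_{j^*+2}\neq 0$. On your kernel $\rho_{j^*+2}=0$ one has $|\epsilon_{j^*+2}|^2=\dfrac{|x_{j^*+2}|^2}{|x_{j^*+1}|^2}\,|\epsilon_{j^*+1}|^2$, so the quadratic form picks up the additional positive term
\[
|\rho_{j^*+3}|^2=a^{2p_1}\,\frac{|x_{j^*+3}|^2|x_{j^*+2}|^2}{|x_{j^*+1}|^2}\,|\epsilon_{j^*+1}|^2.
\]
The $(2,2)$ entry of your $2\times 2$ minor in $(\epsilon_{j^*},\epsilon_{j^*+1})$ is therefore \emph{not} zero, and the determinant becomes
\[
a^{4p_1}\bigl(|x_{j^*-1}|^2+|x_{j^*+1}|^2\bigr)\frac{|x_{j^*+3}|^2|x_{j^*+2}|^2}{|x_{j^*+1}|^2}-a^{2p_1}|q_{j^*+1}|^2,
\]
which has no reason to be negative. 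So the claimed universal negative direction fails.

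\medskip

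\textbf{How to fix it, and what the paper does.} The remedy is to extend the chain: choose $\epsilon_{j^*+3},\dots,\epsilon_J$ successively so that $\rho_{j^*+3}=\cdots=\rho_J=0$ as well (each step determines $\epsilon_{j^*+k}$ from $\epsilon_{j^*+k-1}$ since the neighbouring $x$'s are nonzero). There is no $\rho_{J+1}$, so the extra positive contributions stop, the $(2,2)$ entry is genuinely zero, and your determinant argument goes through. This extended direction is exactly the paper's $S^1$; the inductive framework in the paper simply arranges things so that the break always sits next to the boundary $j=J$, which is why the paper never needs more than a two-term perturbation. Your endpoint case $j^*=J-1$ already works for the same reason. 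A minor remark: the statement ``the only possibility is a single break'' is not literally true (e.g.\ $x_J=0$ forces $x_{J-1}=0$ as well), but since your argument only needs \emph{some} break at which to build the negative direction, this is harmless once the main gap is repaired.
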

\begin{proof}
We first note that, by Theorem \ref{thm:reformulation},
\begin{equation}\label{eq:dev_Q_j}
Q_j=(f_{rec}\star\psi_j^{low})\overline{(f_{rec}\star\psi_j^{high})}\quad j=0,\dots,J.
\end{equation}
Let $H=(h_J^{low},\dots,h_0^{high})$ be a critical point that is not strict saddle. The derivative of $\mathrm{obj}_1$ at $H$ is
\begin{align*}
d\mathrm{obj}_1(H).&(s_J^{low},\dots,s_0^{low},s_J^{high},\dots,s_0^{high})\\
&=2\sum_{j=0}^J\Re\scal{s_j^{low}}{g_j^{low}}+\Re\scal{s_j^{high}}{g_j^{high}},
\end{align*}
\begin{align*}
\mbox{with }g_j^{low}&=h_j^{high}(h_j^{low}\overline{h_j^{high}}-Q_j)\\
g_j^{high}&=h_j^{low}(\overline{h_j^{low}}h_j^{high}-\overline{Q_j}).
\end{align*}
As $H$ is a critical point on the vector space defined by constraints \eqref{eq:additional_constraint_mod}, we must have
\begin{gather}
g_J^{low}=0;\nonumber\\
\forall j=0,\dots,J-1,\quad
a^{p_1}g_{j+1}^{high}+g_j^{low}=0;\label{eq:null_gradient}\\
g_0^{high}=0.\nonumber
\end{gather}
We set $\gamma=h_J^{low}(f_{rec}\star\psi_J^{low})^{-1}$. Let us show by decreasing induction over $j=J,\dots,0$ that $h_j^{low},h_j^{high}$ satisfy
\begin{subequations}\label{eq:hyp_rec}
\begin{align}
h_j^{low}&=\gamma(f_{rec}\star\psi_j^{low})\\
\mbox{ and }
h_j^{high}&=\frac{1}{\overline{\gamma}}(f_{rec}\star\psi_j^{high})\mbox{ if }j\equiv J[2];\\
h_j^{low}&=\frac{1}{\overline{\gamma}}(f_{rec}\star\psi_j^{low})\\
\mbox{ and }
h_j^{high}&=\gamma(f_{rec}\star\psi_j^{high})\mbox{ if }j\not\equiv J[2].
\end{align}
\end{subequations}
We start with $j=J$. If we can show that $h_J^{high}$ has no zero entry, then the equality $g_J^{low}=0$ implies, by the definition of $g_J^{low}$:
\begin{gather*}
h_J^{low}\overline{h_J^{high}}=Q_J=
(f_{rec}\star\psi_J^{low})\overline{(f_{rec}\star\psi_J^{high})}.
\end{gather*}
As $f_{rec}\star\psi_J^{low}$ and $f_{rec}\star\psi_J^{high}$ have no zero entry (by assumption), from the previous equation and the definition of $\gamma$,
\begin{equation*}
h_J^{low}=\gamma(f_{rec}\star\psi_J^{low})\mbox{ and }
h_J^{high}=\frac{1}{\overline{\gamma}}(f_{rec}\star\psi_J^{high}).
\end{equation*}
So let us just show that $h_J^{high}$ has no zero entry. By contradiction, we assume that $h_J^{high}[k]=0$ for some $k=0,\dots,N-1$. Let $S^1\in (\C^N)^{2(J+1)}$ be defined as
\begin{equation*}
S^1=(\delta_k,0,\dots,0,0,\dots,0),
\end{equation*}
where $\delta_k\in\C^N$ is the vector such that $\delta_k[l]=0$ if $l\ne k$ and $\delta_k[k]=1$. Similarly, we define
\begin{equation*}
S^2=(0,\delta_k,\dots,0,a^{p_1}\delta_k,\dots,0)
\end{equation*}
(where $a^{p_1}\delta_k$ is the $J+2$-th component).

Using the equality $h_J^{high}[k]=0$, we see that
\begin{align}
d^2\mathrm{obj}_1(H).(S^1,S^1)&=0;\label{eq:H_diag_zero}\\
d^2\mathrm{obj}_1(H).(iS^1,iS^1)&=0.\nonumber
\end{align}
However,
\begin{align*}
d^2\mathrm{obj}_1(H).(S^1,S^2)&=-2a^{p_1}\Re(Q_J[k]);\\
d^2\mathrm{obj}_1(H).(iS^1,S^2)&=-2a^{p_1}\Im(Q_J[k]).
\end{align*}
As $Q_J[k]=(f\star\psi_J^{low}[k])\overline{(f\star\psi_J^{high}[k])}\ne 0$, it means that $d^2\mathrm{obj}_1(H).(S^1,S^2)\ne 0$ or $d^2\mathrm{obj}_1(H).(iS^1,S^2)\ne 0$. Combined with Equation \eqref{eq:H_diag_zero}, it implies that $d^2\mathrm{obj}_1(H)$ has a strictly negative eigenvalue, so $H$ is strict saddle. This is a contradiction, and concludes the initialization.

We now assume the inductive hypothesis to hold from $j+1$ to $J$, for some $j<J$, and prove it for $j$. From Equations \eqref{eq:dev_Q_j}, \eqref{eq:hyp_rec} and the definition of $g_{j+1}$,
\begin{equation*}
g_{j+1}^{high}=0.
\end{equation*}
From Equation \eqref{eq:null_gradient},
\begin{equation*}
g_j^{low}=0,
\end{equation*}
so, if $h_j^{high}$ has no zero entry, we must have
\begin{equation*}
h_j^{low}\overline{h_j^{high}}=Q_j.
\end{equation*}
Constraint \eqref{eq:additional_constraint_mod} is satisfied, so, if $j\equiv J[2]$, by Equation \eqref{eq:hyp_rec},
\begin{align*}
h_j^{low}&=a^{-p_1}h_{j+1}^{high}\\
&=a^{-p_1}\gamma(f_{rec}\star\psi_{j+1}^{high})\\
&=\gamma(f_{rec}\star\psi_j^{low}).
\end{align*}
Combined with the previous equation and Equation \eqref{eq:dev_Q_j}, this relation also yields
\begin{equation*}
h_j^{high}=\frac{1}{\overline{\gamma}}(f_{rec}\star\psi_j^{high}).
\end{equation*}
A similar reasoning holds if $j\not\equiv J[2]$, so the induction hypothesis holds at rank $j$, provided that we show that $h_j^{high}$ has no zero entry.

To prove this last affirmation, we reason by contradiction, and assume $h_j^{high}[k]=0$ for some $k=0,\dots,N-1$. Multiplying everything by a suitable global phase, we can assume $\Re(\overline{h_j^{low}[k]}Q_j[k])\ne 0$ (this number is non zero, by the assumptions over $f_{rec}$ and by the inductive hypothesis applied to $h_j^{low}=a^{-p_1}h_{j+1}^{high}$). We define
\begin{align*}
S^1=(&h_J^{low}[k]\delta_k,-h_{J-1}^{low}[k]\delta_k,\dots,\pm h_j^{low}[k]\delta_k,0,\dots,0,\\
-&h_{J}^{high}[k]\delta_k,h_{J-1}^{high}[k]\delta_k,\dots,\pm h_{j+1}^{high}[k]\delta_k,0,\dots,0);\\
S^2=(&0,\dots,\delta_k,\dots,0,0,\dots,a^{p_1}\delta_k,\dots,0).
\end{align*}
(In the second definition, the non-zero components are at positions $J-j+2$ and $2J-j+2$.)

We have
\begin{align*}
d^2\mathrm{obj}_1(H).(S^1,S^1)&=0;\\
d^2\mathrm{obj}_1(H).(S^1,S^2)&=-2(-1)^{J-j}a^{p_1}\Re(\overline{h_j^{low}[k]}Q_j[k])\ne 0.
\end{align*}
So $d^2\mathrm{obj}_1(H)$ has at least a strictly negative eigenvalue; this is a contradiction.

This concludes the induction, and proves that all critical points that are not strict saddle have the desired form. Conversely, any point that has this form is a global minimum of $\textrm{obj}_1$, by Equation \eqref{eq:dev_Q_j}, so it is a critical point, and is not strict saddle.
\end{proof}

\begin{thm*}[\ref{thm:critical_points_2}]
We assume $J\geq 2$, and keep the hypotheses of Proposition \ref{prop:critical_points_1}.

Let $(h_J^{low},\dots,h_0^{low},h_J^{high},\dots,h_0^{high},f)$ be a critical point of $\mathrm{obj}_2$ on the real manifold $\mathcal{C}_1\times \mathcal{A}$. There exists $\alpha\in\C-\{0\},b\in\C$ such that
\begin{equation*}
f=\alpha f_{rec}+b.
\end{equation*}
\end{thm*}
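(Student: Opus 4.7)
The plan is to combine the convexity of $\mathrm{obj}_2$ with the analytic extension technique of Lemma \ref{lem:extension}. Since $\mathcal{C}_1\times\mathcal{A}$ is an open subset of a complex affine space and $\mathrm{obj}_2$ is a real-valued convex quadratic in the real coordinates, every critical point is a global minimizer. The minimum value $0$ is attained at $(\gamma,f)=(1,f_{rec})$, so every critical point must satisfy $f\star\psi_j^{low}=\gamma\cdot(f_{rec}\star\psi_j^{low})$ for $j\equiv J\,[2]$ together with the analogous ``high'' equations for the other parity. Using the Cauchy relation $\psi_{j+1}^{high}=a^{p_1}\psi_j^{low}$, each ``high'' equation collapses onto the ``low'' equation at the previous scale; since $J\geq 2$, the first family then holds for at least two distinct indices $j_1\neq j_2$. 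The nonvanishing hypothesis on $f_{rec}\star\psi_{j_i}^{low}$ lets me eliminate $\gamma$ pointwise, yielding
\[
(f\star\psi_{j_1}^{low})(f_{rec}\star\psi_{j_2}^{low})=(f\star\psi_{j_2}^{low})(f_{rec}\star\psi_{j_1}^{low}).
\]

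I would then apply the analytic polynomial $P(\cdot)$ from the proof of Theorem \ref{thm:reformulation}. For Cauchy wavelets the explicit identity
\[
P(f\star\psi_j^{low})(z)=a^{jp_1}\tilde P(e^{-ca^j}z),\quad c=2p_2a/(a+1),
\]
with $\tilde P(w)=\sum_{k=1}^{N/2}\hat f[k]\,k^{p_1}w^k$ (and $\tilde P_{rec}$ defined analogously from $f_{rec}$), lets me rewrite the pointwise equality above as the vanishing at every $N$-th root of unity of
\[
F(z)=\tilde P(\lambda_1 z)\tilde P_{rec}(\lambda_2 z)-\tilde P(\lambda_2 z)\tilde P_{rec}(\lambda_1 z),
\]
where $\lambda_i=e^{-ca^{j_i}}$. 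Since $F$ has degree at most $N$ and vanishes at the $N$ roots of $z^N-1$, one has $F(z)=K(z^N-1)$ for some constant $K$; but $F(0)=0$ (because $\tilde P(0)=\tilde P_{rec}(0)=0$, itself due to $\hat\psi_j^{low}[0]=0$), forcing $K=0$ and $F\equiv 0$. Consequently the rational function $\rho(w):=\tilde P(w)/\tilde P_{rec}(w)$ satisfies $\rho(\mu w)=\rho(w)$ with $\mu=\lambda_1/\lambda_2>0$ and $\mu\neq 1$; since $\mu$ has infinite multiplicative orbit, $\rho$ must be constant. This reads $\hat f[k]=\alpha\hat f_{rec}[k]$ for every $k=1,\dots,N/2$, while $\hat f[0]$ is unconstrained, so $f=\alpha f_{rec}+b$ with $b:=(\hat f[0]-\alpha\hat f_{rec}[0])/N$. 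The nondegeneracy $\alpha\neq 0$ follows from the fact that otherwise $f$ would be constant, $f\star\psi_j^{low}$ would vanish identically, and $\gamma$ would be zero, contradicting $\gamma\in(\C-\{0\})^N$.

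The step I expect to require the most care is the elimination of the alternative $F=K(z^N-1)$ with $K\neq 0$ in the middle paragraph: only the vanishing $\hat\psi(0)=0$, specific to Cauchy wavelets with $p_1>0$, produces the factor $z^2$ inside $F$ that closes this loophole. A secondary check is that $\tilde P_{rec}\not\equiv 0$ so that $\rho$ is a well-defined rational function, which holds because otherwise $f_{rec}$ would be constant and $f_{rec}\star\psi_j^{low}$ would vanish, contradicting the hypothesis of Proposition \ref{prop:critical_points_1}.
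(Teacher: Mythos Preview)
Your proof is correct and follows essentially the same route as the paper: show that a critical point must have $\mathrm{obj}_2=0$, extract the two relations $f\star\psi_J^{low}=\gamma(f_{rec}\star\psi_J^{low})$ and $f\star\psi_{J-2}^{low}=\gamma(f_{rec}\star\psi_{J-2}^{low})$, and run the polynomial/rational-function argument to force $\hat f[k]=\alpha\hat f_{rec}[k]$ for $k\geq 1$.

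Two remarks. First, the sentence ``$\mathcal{C}_1\times\mathcal{A}$ is an open subset of a complex affine space'' is not literally true: the embedding of $\mathcal{C}_1$ in $(\C^N)^{2(J+1)}$ involves both $\gamma$ and $1/\overline{\gamma}$ and is not affine. What you use (and what is true) is that, via the parametrization $\gamma\mapsto(\gamma(f_{rec}\star\psi_J^{low}),\dots)$, the restriction of $\mathrm{obj}_2$ becomes a convex quadratic in $(\gamma,f)\in(\C-\{0\})^N\times\mathcal{A}$, because $\mathrm{obj}_2$ depends only on the components of $\mathcal{C}_1$ that are \emph{linear} in $\gamma$. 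The paper reaches the same conclusion $\mathrm{obj}_2(H)=0$ by a one-line Euler-type identity instead: differentiating along the tangent vector corresponding to $\delta=1$, $g=f$ gives exactly $2\,\mathrm{obj}_2(H)$. Second, your degree-count argument ($\deg F\le N$, $N$ roots, $F(0)=0$) is a clean alternative to the paper's slightly different bookkeeping, which bounds the degree of $\tilde P/\tilde P_{rec}$ after cancelling a common factor $X$ and then iterates the functional equation $\rho(\mu z)=\rho(z)$; both lead to the same constancy conclusion. Your final check that $\alpha\neq 0$ is a nice addition that the paper's proof leaves implicit.
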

\begin{proof}
The tangent to $\mathcal{C}_1\times \mathcal{A}$ at a point $H=(h_J^{low},\dots,h_J^{high},\dots,f)$ is the real vector space
\begin{align*}
\Big\{\Big(&\delta h_J^{low},-\overline{\delta}h_{J-1}^{low},\delta h_{J-2}^{low},\dots,
-\overline{\delta}h_{J}^{high},\delta h_{J-1}^{high},\dots,g \Big),\\
&\quad\quad \mbox{such that }\delta\in\C^N,g\in\mathcal{A}\Big\}.
\end{align*}
If $H$ is a critical point of $\mathrm{obj}_2$, we evaluate $d\mathrm{obj}_2(H)$ along the vector of the tangent space that corresponds to $\delta=1$ and $g=f$, and we obtain
\begin{equation*}
2\mathrm{obj}_2(H)=0.
\end{equation*}
In particular,
\begin{align*}
h_J^{low}&=f\star\psi_J^{low};\\
h_{J-2}^{low}&=f\star\psi_{J-2}^{low}.
\end{align*}
So, from the definition of $\mathcal{C}_1$, there exists $\gamma\in(\C-\{0\})^N$ such that
\begin{subequations}
\begin{align}
\gamma(f_{rec}\star\psi_J^{low})&=f\star\psi_J^{low};\label{eq:kernel_1}\\
\gamma(f_{rec}\star\psi_{J-2}^{low})&=f\star\psi_{J-2}^{low}.\label{eq:kernel_2}
\end{align}
\end{subequations}
From the definition of Cauchy wavelets and $r$, we deduce that
\begin{align*}
\hat\psi_J^{low}[k]=a^{Jp_1}k^{p_1}&\exp\left(-k a^Jp_2\left(\frac{2 a}{a+1}\right)\right),\\
&\forall k=0,\dots,N-1.
\end{align*}
So if we define
\begin{equation*}
P(f)(X)=\frac{1}{N}\sum_{l=1}^{[N/2]}l^{p_1}\hat f[l]X^l,
\end{equation*}
we see that, for any $k=0,\dots,N-1$,
\begin{align*}
f\star\psi_J^{low}[k]&=\frac{1}{N}\sum_{l=0}^{N-1}\hat f[l]\hat\psi_J[l]\left(e^{\frac{2\pi i k}{N}}\right)^l\\
&=a^{Jp_1}P(f)\left(R_J e^{\frac{2\pi ik}{N}}\right),
\end{align*}
where
\begin{equation*}
R_J = \exp\left(-a^Jp_2\left(\frac{2a}{a+1}\right)\right).
\end{equation*}
Similarly,
\begin{equation*}
f\star\psi_{J-2}^{low}[k]=a^{(J-2)p_1}P(f)(R_{J-2}e^{\frac{2\pi ik}{N}}),
\end{equation*}
where
\begin{equation*}
R_{J-2} = \exp\left(-a^{J-2}p_2\left(\frac{2a}{a+1}\right)\right).
\end{equation*}
In the same way, we have
\begin{align*}
f_{rec}\star\psi_{J}^{low}[k]&=a^{Jp_1}P(f_{rec})(R_{J}e^{\frac{2\pi ik}{N}});\\
f_{rec}\star\psi_{J-2}^{low}[k]&=a^{(J-2)p_1}P(f_{rec})(R_{J-2}e^{\frac{2\pi ik}{N}}),
\end{align*}
if we define
\begin{equation*}
P(f_{rec})(X)=\frac{1}{N}\sum_{l=1}^{[N/2]}l^{p_1}\hat f_{rec}[l]X^l.
\end{equation*}
A consequence of these previous equalities and Equations \eqref{eq:kernel_1} and \eqref{eq:kernel_2} is that
\begin{align}
\frac{P(f)(R_J x)}{P(f_{rec})(R_J x)}=
\frac{P(f)(R_{J-2} x)}{P(f_{rec})(R_{J-2} x)}
\quad\forall x=1,e^{\frac{2\pi i}{N}},e^{\frac{4\pi i}{N}},\dots.
\label{eq:equality_N_points}
\end{align}
(The denominator of these fractions is never zero, because $f_{rec}\star\psi_J^{low}$ and $f_{rec}\star\psi_{J-2}^{low}$ have no zero entry.)

The rational fraction $\frac{P(f)(X)}{P(f_{rec})(X)}=\frac{P(f)(X)/X}{P(f_{rec})(X)/X}$ is the quotient of two polynomials with degree at most $[N/2]-1$, so it is uniquely determined by its values in $N$ complex points, and Equation \eqref{eq:equality_N_points} implies:
\begin{equation*}
\frac{P(f)(x)}{P(f_{rec})(x)}=\frac{P(f)(xR_{J-2}/R_J)}{P(f_{rec})(xR_{J-2}/R_J)},
\quad\quad \forall x\in\C.
\end{equation*}
Applying recursively this equality yields, for any $s\in\N$,
\begin{equation*}
\frac{P(f)(x)}{P(f_{rec})(x)}=\frac{P(f)\left(x(R_{J-2}/R_J)^s\right)}{P(f_{rec})\left(x(R_{J-2}/R_J)^s\right)},
\quad\quad \forall x\in\C.
\end{equation*}
Depending on the exact degrees of $P(f)$ and $P(f_{rec})$, the right-hand side of this equalities converges (uniformly on every compact set) to either $\infty$ or a constant function, when $s\to +\infty$. The convergence to $\infty$ is actually impossible, because $\frac{P(f)}{P(f_{rec})}$ has finite values at almost any point of $\C$, so $\frac{P(f)}{P(f_{rec})}$ is a constant function:
\begin{equation*}
\frac{P(f)(z)}{P(f_{rec})(z)}=\alpha,\quad\quad \forall z\in\C,
\end{equation*}
for some $\alpha\in\C$.

From the definitions of $P(f)$ and $P(f_{rec})$,
\begin{equation*}
\hat f[l]=\alpha \hat f_{rec}[l]\quad\quad\forall l=1,\dots,[N/2].
\end{equation*}
As $f$ and $f_{rec}$ are analytic, we actually have
\begin{equation*}
\hat f[l]=\alpha \hat f_{rec}[l]\quad\quad\forall l=1,\dots,N-1.
\end{equation*}
So the Fourier transform $f-\alpha f_{rec}$ has only zero entries, except maybe in $l=0$, and $f-\alpha f_{rec}$ is a constant function.
\end{proof}

\section{Proof of lemma \ref{lem:rec_small}\label{app:rec_small}}

\begin{lem*} (\ref{lem:rec_small})
Let $m\in\R^N$ and $K\in\N^*$ be fixed. We consider the problem
\begin{align*}
\mbox{Find }g\in\C^N\mbox{ s.t. }&|g|=m\\
\mbox{and }&\mbox{Supp}(\hat g)\subset\{1,...,K\}.
\end{align*}
This problem has at most $2^{K-1}$ solutions, up to a global phase, and there exist a simple algorithm which, from $m$ and $K$, returns the list of all possible solutions.
\end{lem*}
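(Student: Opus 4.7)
The plan is to reduce the problem to a classical polynomial factorization argument of Fourier phase retrieval, closely following the spirit of \citep{hayes}. Since $\hat g$ is supported on $\{1,\dots,K\}$, I associate to $g$ the polynomial
\begin{equation*}
P_g(X)=\sum_{k=1}^{K}\hat g[k]\,X^{k}.
\end{equation*}
The key observation is that knowing $m=|g|$ is equivalent to knowing $|g|^{2}$, whose DFT is the circular autocorrelation of $\hat g$. Because $\hat g$ is supported on $\{1,\dots,K\}$, this autocorrelation is supported on $\{-(K-1),\dots,K-1\}$ and its values are recovered directly from $m$ via one DFT. A short computation shows that these values are exactly the coefficients of the Laurent polynomial
\begin{equation*}
A(X)\;\overset{def}{=}\;P_g(X)\,\overline{P_g(1/\overline X)},
\end{equation*}
so $A$ is entirely determined by $m$.

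Next I would factor out the $X$ in $P_g$ by writing $P_g(X)=X\,\tilde P_g(X)$ with $\deg\tilde P_g\leq K-1$, and check that $A(X)=\tilde P_g(X)\,\overline{\tilde P_g(1/\overline X)}$. This exhibits the classical root-flipping structure: the roots of $A$ split into the roots $\{z_1,\dots,z_{K-1}\}$ of $\tilde P_g$ and their conjugate reciprocals $\{1/\overline{z_1},\dots,1/\overline{z_{K-1}}\}$. Any polynomial $\tilde Q$ obtained from $A$ by selecting, for each pair $\{z_j,1/\overline{z_j}\}$, exactly one element as a root, and taking the same leading coefficient up to unit modulus, satisfies $\tilde Q(X)\,\overline{\tilde Q(1/\overline X)}=A(X)$. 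The modulus of the leading coefficient is fixed by $A$, while its phase is precisely the global phase ambiguity. This yields at most $2^{K-1}$ candidates for $\tilde P_g$ modulo a unit complex, hence the bound.

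The algorithm is the constructive version of the above: (i) compute $\widehat{m^{2}}$ via one DFT; (ii) form the polynomial $A$ (of degree $\leq 2(K-1)$ after multiplication by $X^{K-1}$) and compute its roots; (iii) group them into conjugate-reciprocal pairs; (iv) enumerate the $2^{K-1}$ subsets selecting one element from each pair, build each $\tilde P_g$ with the unique admissible leading modulus, form $\hat g$ and invert the DFT; (v) among the candidates, keep those that indeed satisfy $|g|=m$ (this filters spurious solutions coming from the leading-coefficient-phase choice).

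The main obstacle is handling degeneracies gracefully. Roots on the unit circle satisfy $z_j=1/\overline{z_j}$, so a pair collapses and must have even multiplicity in $A$; this reduces the number of free binary choices and makes the bound $2^{K-1}$ a non-tight upper bound, but it remains valid. Similarly, if the actual support of $\hat g$ is strictly smaller than $\{1,\dots,K\}$, then $\deg\tilde P_g<K-1$, $A$ has fewer non-trivial roots, and again the bound holds. The only subtle point worth making explicit in the write-up is that the map $\tilde P_g\mapsto A$ is well defined and that every admissible $g$ is recovered by the enumeration — that is, that the candidate list is both complete and of size at most $2^{K-1}$. I would defer to \citep{hayes} for the underlying uniqueness-up-to-flipping statement and detail the DFT/support bookkeeping inherent to the circular (rather than non-circular) setting.
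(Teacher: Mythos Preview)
Your proposal is correct and follows essentially the same route as the paper: associate a degree-$(K-1)$ polynomial to $\hat g$, identify knowing $|g|^2$ with knowing the Laurent product $P(X)\,\overline{P}(1/X)$ via the autocorrelation of $\hat g$, and enumerate the at most $2^{K-1}$ root-flip factorizations, with the modulus of the leading coefficient fixed by the data. The only quibble is your step~(v): the phase of the leading coefficient is precisely the global-phase ambiguity and never produces spurious candidates, so no filtering is needed---every enumerated $g$ already satisfies $|g|=m$ by construction.
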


\begin{proof}
We define a polynomial $P(g)$ by
\begin{equation*}
P(g)(X)=\hat g[1]+\hat g[2]X+...+\hat g[K]X^{K-1}.
\end{equation*}
We show that the constraint $|g|=m$ amounts to knowing $P(g)(X)\overline{P(g)}(1/X)$. This is in turn equivalent to knowing the roots of $P(g)$ (and thus knowing $g$) up to inversion with respect to the unit circle. There are in general $K-1$ roots, and each one can be inverted. This gives $2^{K-1}$ solutions.

We set
\begin{align*}
Q(g)(X)&=\overline{P(g)}(1/X)\\
&=\overline{\hat g[K]}X^{-(K-1)}+\overline{\hat g[K-1]}X^{-(k-2)}+...+\overline{\hat g[1]}.
\end{align*}
The equation $|g|^2=m^2$ is equivalent to $\widehat{|g|^2}=\widehat{m^2}$, that is $\frac{1}{N}\hat g\star \hat{\overline{g}}=\widehat{m^2}$. For each $k\in\{-(K-1),...,K-1\}$,
\begin{equation*}
\hat g\star\hat{\overline{g}}[k]
=\underset{s}{\sum}\hat g[k-s]\overline{\hat{g}[-s]}.
\end{equation*}
This number is the coefficient of order $k$ of $P(g)(X)Q(g)(X)$, so $|g|=m$ if and only if
\begin{equation}\label{eq:condition1}
P(g)(X)Q(g)(X)=N\underset{k=-(K-1)}{\overset{K-1}{\sum}}\widehat{m^2}[k]X^k.
\end{equation}
Let us denote by $r_1,...,r_{K-1}$ the roots of $P(g)(X)$, so that
\begin{gather*}
P(g)(X)=\hat g[K](X-r_1)...(X-r_{K-1});\\
Q(g)(X)=\overline{\hat g[K]}(1/X-\overline{r}_1)...(1/X-\overline{r}_{K-1}).
\end{gather*}
From \eqref{eq:condition1}, the equality $|g|=m$ holds if and only if $\hat g[K],r_1,...,r_{K-1}$ satisfy
\begin{align}
|\hat g[K]|^2&\underset{j=1}{\overset{K-1}{\prod}}(X-r_j)(1/X-\overline{r}_j)\nonumber\\
&=N\underset{k=-(K-1)}{\overset{K-1}{\sum}}\widehat{m^2}[k]X^k.
\label{eq:condition2}
\end{align}
If we denote by $s_1,1/\overline{s}_1,...,s_{K-1},1/\overline{s}_{K-1}$ the roots of the polynomial function $\underset{k=-(K-1)}{\overset{K-1}{\sum}}\widehat{m^2}[k]X^k$, then the only possible choices for $r_1,...,r_{K-1}$ are, up to permutation,
\begin{equation*}
r_1=s_1\mbox{ or }1/\overline{s}_1,\quad\quad\quad
r_2=s_2\mbox{ or }1/\overline{s}_2,\quad\quad\quad
\dots
\end{equation*}
So there are $2^{K-1}$ possibilities. Once the $r_j$ have been chosen, $\hat g[K]$ is uniquely determined by \eqref{eq:condition2}, up to multiplication by a unitary complex.

From $r_1,...,r_{K-1},\hat g[K]$, $P(g)$ is uniquely determined and so is $g$. The algorithm is summarized in Algorithm \ref{alg:exhaustive}.
\end{proof}

\begin{algorithm}
\caption{reconstruction by exhaustive search for a small problem\label{alg:exhaustive}}
\begin{algorithmic}[1]
\REQUIRE $K$,$m$
\STATE Compute the roots of $\underset{k=-(K-1)}{\overset{K-1}{\sum}}\widehat{m^2}[k]X^k$.
\STATE Group them by pairs $(s_1,1/\overline{s}_1),...,(s_{K-1},1/\overline{s}_{K-1})$.
\STATE List the $2^{K-1}$ elements $(r_1,...,r_{K-1})$ of $\{s_1,1/\overline{s}_1\}\times ...\times\{s_{K-1},1/\overline{s}_{K-1}\}$.
\FORALL{the elements}
\STATE Compute the corresponding $\hat g[K]$ by \eqref{eq:condition2}.
\STATE Compute the coefficients of $P(g)(X)=\hat g[K](X-r_1)...(X-r_{K-1})$.
\STATE Apply an IFFT to the coefficients to obtain $g$.
\ENDFOR
\ENSURE the list of $2^{K-1}$ possible values for $g$.
\end{algorithmic}
\end{algorithm}

\bibliographystyle{plainnat}
\bibliography{../bib_articles.bib,../bib_proceedings.bib,../bib_livres.bib,../bib_misc.bib}

\end{document}